\documentclass[reqno,11pt]{amsart}

\usepackage{mathdots}
\usepackage{tikz}
\usepackage{tikz-cd}
\usepackage{amssymb}
\usepackage{amsgen}
\usepackage{amsmath}
\usepackage{amsthm}
\usepackage{mathrsfs}
\usepackage{cite}
\usepackage{amsfonts}
\usepackage{enumitem}

\newcommand{\coker}{\mathop{\mathrm{coker}}\nolimits}
\hyphenation{mon-oid mon-oids}

\newcommand{\cd}{\mathop{\mathrm{cd}}}

\newcommand{\End}{\mathop{\mathrm{End}}}

\newcommand{\rad}{\mathop{\mathrm{rad}}\nolimits}

\newcommand{\Res}{\mathop{\mathrm{Res}}\nolimits}
\newcommand{\Ind}{\mathop{\mathrm{Ind}}\nolimits}
\newcommand{\Coind}{\mathop{\mathrm{Coind}}\nolimits}

\newcommand{\J}{\mathrel{\mathscr J}} 
\newcommand{\eL}{\mathrel{\mathscr L}} 

\newcommand{\inv}{^{-1}}
\newcommand{\p}{\varphi}

\newcommand{\ov}[1]{\ensuremath{\overline {#1}}}
\newcommand{\til}[1]{\ensuremath{\widetilde {#1}}}

\newcommand{\wh}{\widehat}

\newcommand{\soc}[1]{\mathrm{soc}(#1)}

\newcommand{\Hom}{\mathop{\mathrm{Hom}}\nolimits}
\newcommand{\Tor}{\mathop{\mathrm{Tor}}\nolimits}
\newcommand{\Ext}{\mathop{\mathrm{Ext}}\nolimits}


\usepackage{xcolor}



\newtheorem{Thm}{Theorem}[section]
\newtheorem{Prop}[Thm]{Proposition}

\newtheorem{Lemma}[Thm]{Lemma}
{\theoremstyle{definition}
}
{\theoremstyle{remark}
}
\newtheorem{Cor}[Thm]{Corollary}
{\theoremstyle{remark}
}
{\theoremstyle{remark}
\newtheorem{Example}[Thm]{Example}}

\newtheorem{Conjecture}[Thm]{Conjecture}

{\theoremstyle{remark}
}
{\theoremstyle{remark}
}
\newtheorem{Question}[Thm]{Question}
{\theoremstyle{remark}
\newtheorem*{Claim*}{Claim}}

\numberwithin{equation}{section}

\title{The modular representation theory of monoids}

\author{Benjamin Steinberg}
\address[B.~Steinberg]{%
    Department of Mathematics\\
    City College of New York\\
    Convent Avenue at 138th Street\\
    New York, New York 10031\\
    USA}
\email{bsteinberg@ccny.cuny.edu}

\thanks{The author was supported by a Simons Foundation Collaboration Grant, award number 849561.}
\date{July 10, 2023}

\keywords{Modular representations, monoids, Cartan invariants}
\subjclass[2020]{20M25, 16G99}

\begin{document}

\begin{abstract}
This paper develops the fundamentals of modular representation theory for finite monoids, introducing the decomposition matrix and exploring its connection to Brauer characters. We define modular characteristic and explain how the representation theory in nonmodular positive characteristic behaves like the characteristic zero theory by showing that one can lift from nonmodular characteristic $p$ all simple and projective indecomposable modules, as well as a quiver presentation of the basic algebra.  As an application of the theory developed, we give a new proof of Glover's theorem that the monoid of $2\times 2$-matrices over $\mathbb F_p$ has infinite representation type over fields of characteristic $p$. 

We also investigate the relationship between nonsingularity of the Cartan matrix of a monoid algebra in characterstic zero and in positive characteristic.  We show that for von Neumann regular monoids the Cartan matrix is always nonsingular and we show that if a monoid has aperiodic left (or right) stabilizers, then nonsingularity in characteristic zero implies nonsingularity in positive characteristic.  Florian Eisele has recently shown that the Cartan matrix of a monoid algebra can be nonsingular in characteristic zero and singular in positive characteristic, disproving a conjecture of the author in an earlier version of this paper.  A new conjecture is proposed, unifying the cases of regular monoids and monoids with aperiodic stabilizers.
\end{abstract}
\maketitle

\section{Introduction}
Outside of some work on characters~\cite{guralnick,ourcharacter} and Brauer characters~\cite{guralnick,PutchaBrauer}, there has not been much research into the modular representation theory of finite monoids.  There are most likely two principal reasons for this.  First and foremost, the applications of monoid representation theory have been to areas like automata theory~\cite{mortality,mycerny,AMSV}, combinatorics and probability theory~\cite{Brown1,ayyer_schilling_steinberg_thiery.2013,RhodesSchilling,ourmemoirs} and symbolic dynamics~\cite{BRzeta}; see~\cite[Chapter~14]{repbook} for details.  Almost all these applications involve the characteristic zero theory, with the exception of one result from~\cite{AMSV} that used modular representations to count modulo a prime the number of factorizations of a word into a product of elements from some formal languages.
The second reason is that one of the main strategies in the representation theory of finite monoids, since its inception, has been to reduce problems to the representation theory of groups.  In characteristic zero, this has been quite successful because the ordinary representation theory of groups is a well-understood topic for which the vast majority of questions can be answered.  Modular representation theory of groups is a much more subtle subject and there is still a large amount of effort being dedicated toward improving our understanding.  Therefore, reducing questions to the modular representation theory of groups is not guaranteed to result in an answer.

A key ingredient that distinguishes modular representation theory of groups from the study of arbitrary self-injective finite dimensional algebras is the ability to pass between the characteristic zero theory and the characteristic $p$ theory.  Since the characteristic zero theory is very powerful, it can be a huge boon in solving problems about modular representations.  This began with the work of Brauer and his development of Brauer characters and continues to pervade much of the work in this area.

While there are no conceptual barriers to trying this approach for monoids, there is the obvious difficulty that monoid algebras are seldom semisimple in characteristic zero, and so at first sight it may seem that you are going from a complicated algebra in characteristic $p$ to an equally complicated algebra in characteristic zero.  But this is not really the case.  For example, a von Neumann regular monoid has a quasi-hereditary algebra in characteristic zero, but not in modular characteristic, and so lifting to characteristic zero still simplifies things.  Similarly, the ability to reduce certain types of questions to groups means that lifting to characteristic zero may allow you to apply the ordinary representation theory of groups.

One of the goals of this paper is to put forward the idea that good behavior in characteristic zero will, to some extent, be reflected in positive characteristic.  In an earlier version of this paper,  I proposed the conjecture that if $M$ is a finite monoid and the Cartan matrix of the complex algebra $\mathbb CM$ is nonsingular, then the Cartan matrix of $KM$ will be nonsingular for every field $K$.  This of course happens for groups thanks to work of Brauer. In this paper I prove that the Cartan matrix is always nonsingular for algebras of von Neumann regular monoids, which include, for example, matrix monoids over finite fields, finite monoids of Lie type~\cite{LieType}, full transformation monoids and most diagram monoids.  I also prove that nonsingularity of the Cartan matrix in characteristic zero implies nonsingularity in positive charactersitic for monoids in which the left (or right) stabilizer of every element is aperiodic.  This means if $mn=n$ for some $m,n\in M$, then $m^a=m^{a+1}$ for some $a\geq 0$ (for example, this is the case for groups).  The former result relies on connections between the Cartan matrix of $eAe$ and $A/AeA$ when $AeA$ is projective as a left $A$-module~\cite{quasistrat}, whereas the latter uses the theory of Brauer characters and lifting projective modules to characteristic zero and is based on Brauer's proof of the corresponding result for groups.  Recently, Eisele~\cite{eisele2023counterexample} provided an example of a monoid whose Cartan matrix is nonsingular in characteristic zero but is singular in a certain positive characteristic, disproving my original conjecture.  His construction is built on taking a group $G$ and a sufficiently complicated $G$-biset $X$ and turning $G\cup X\cup \{0\}$ into a monoid with $X^2=0$.  The proof exploits the decomposition matrix techniques developed herein.  I propose a new conjecture that covers the positive results for regular monoids and monoids with aperiodic stabilizers and which avoids the pitfalls of Eisele's construction.

The main thrust of this paper is to develop the fundamentals of modular representation theory for monoids.  Some of this work is straightforward modification of the group case to orders in a not necessarily semisimple finite dimensional algebra over the field of fractions of a complete discrete valuation ring.  This involves defining the decomposition map and constructing a commutative square generalizing the Brauer cde triangle (one only obtains a triangle for orders in a semisimple algebra).  A suggestion on how to do this can be found  in~\cite{WebsterMO}, and I think this should be considered folklore.   For monoid algebras, the decomposition map is surjective, essentially because Brauer proved it is so for groups using his induction theorem.   This allows ones to factor the Cartan matrix of the monoid over a splitting field of positive characteristic as a product $D^TCD$ where $D$ is a full rank nonnegative integer matrix and $C$ is the Cartan matrix of the complex monoid algebra.  This suggests that the characteristic zero theory influences the theory in positive characteristic.  For example, if $p>0$ is a prime not dividing the order of  any maximal subgroup of the monoid, then the decomposition matrix is unimodular, and so nonsingularity in characteristic zero passes to nonsingularity in characteristic $p$.  Let me remark, that in this situation $D$ does not have to be a permutation matrix, even in the case that the complex monoid algebra is semisimple.

I do characterize the situation in which $D$ must be a permutation matrix.  Moreover, I propose a definition of modular characteristic for finite monoids and show that in nonmodular characteristic the decomposition matrix is a permutation matrix and a quiver presentation for the characteristic $p$ basic algebra can be lifted to characteristic $0$.  To show that decomposition matrix techniques have some teeth, I use them to provide a new proof of Glover's Theorem~\cite{Glover} that the monoid of $2\times 2$ matrices over the $p$-element field has infinite representation type over fields of characteristic $p$ and I compute a quiver presentation for the case $p=2$, which yields a string algebra first studied by Gelfand and Ponomarev~\cite{lorentz}.

Brauer characters have already been developed in slightly different ways by different authors~\cite{guralnick,PutchaBrauer}.  The definition used here is essentially equivalent to that of~\cite{PutchaBrauer}, whereas  the definition used in~\cite{guralnick} is what is called here the Brauer lift of a Brauer character, as it is a virtual character lifting the more natural definition of Brauer character.   We develop the properties of Brauer characters more fully than in~\cite{guralnick,PutchaBrauer}.
 It is shown that nonsingularity of the Cartan matrix in characteristic zero implies nonsingularity in characteristic $p$  if the ordinary character of the lift to characteristic zero of a finitely generated projective module from characteristic $p$ is determined by its restriction to the $p$-regular elements of the monoid, that is, the elements of the monoid whose period is not divisible by $p$.  Unlike the case of groups, these characters do not have to vanish on the non-$p$-regular elements (even for von Neumann regular monoids).

\section{Preliminaries}
This section gathers together essentially well-known material.

\subsection{Orders and finite dimensional algebras}

If $A$ is a finite dimensional algebra over a field $K$, then the sets of isomorphism classes of projective indecomposable (left) $A$-modules and of simple $A$-modules are in bijection. Choose $P_1,\ldots, P_n$ representing the isomorphism classes of projective indecomposable modules and let $S_i=P_i/\rad P_i$ be the corresponding simple module.  The \emph{Cartan matrix} of $A$ is the $n\times n$-matrix $C(A)$ with $C(A)_{ij}$ the multiplicity of $S_i$ as a composition factor of $P_j$.   Note that if $K$ is a splitting field for $A$, then $C(A)_{ij}=\dim_K \Hom_A(P_i,P_j) =\dim_K e_iAe_j$ where $e_i,e_j$ are primitive idempotents with $P_i\cong Ae_i$ and $P_j\cong Ae_j$; for nonsplitting fields, $C(A)_{ij}$ is the length of $e_iAe_j$ as an $e_iAe_i$-module.

Alternatively, $C(A)$ is the matrix of the Cartan homomorphism $c\colon K_0(A)\to G_0(A)$ with respect to the bases for $K_0(A)$ and $G_0(A)$ consisting of the isomorphism classes of projective indecomposable modules and simple modules, respectively.  Recall that $K_0(A)$ is the Grothendieck group of the monoid of isomorphism classes of finitely generated projective $A$-modules under direct sum, whereas $G_0(A)$ is the Grothendieck group of the monoid of isomorphism classes of finitely generated $A$-modules (under direct sum) factored by the congruence generated by putting $[M]=[L]+[N]$ if there is an exact sequence $0\to L\to M\to N\to 0$.

If $A$ is semisimple, then $C(A)$ is the identity matrix.  Note that $C(A^{op}) = C(A)^T$ in the case $K$ is a splitting field for $A$ as $\Hom_{A^{op}}(e_jA,e_iA)\cong e_iAe_j\cong \Hom_A(Ae_i,Ae_j)$ for primitive idempotents $e_i,e_j$.  The \emph{Cartan determinant} of $A$ is $\cd (A) = \det C(A)$.   If $A$ has finite global dimension, then $\cd (A)=\pm 1$, and the famous Cartan determinant conjecture says that $\cd (A)$ should then be $1$.

If $K$ is a splitting field for $A$, then there is perfect pairing $\langle \cdot,\cdot\rangle\colon K_0(A)\times G_0(A)\to \mathbb Z$ defined by $\langle [P],[M]\rangle = \dim_K\Hom_A(P,M)$ for a finitely generated projective module $P$ and a finitely generated $A$-module $M$.  The isomorphism classes of projective indecomposable modules and of simple modules are dual bases with respect to this pairing, that is, $\langle [P_i],[S_j]\rangle=\delta_{ij}$.

The following generalization of the work of Brauer seems to be folklore (cf.~\cite{WebsterMO}), but I had difficulty finding a reference with proofs at this level of generality, as most texts treat only the case of orders in semisimple algebras. Therefore, I include more details than perhaps is strictly speaking necessary.   Let $\mathcal O$ be a discrete valuation ring with maximal ideal $\mathfrak m$, field of fractions $F$ and residue field $k=\mathcal O/\mathfrak m$.  We will typically be in the case that $F$ has characteristic $0$, the residue field $k$ has characteristic $p>0$ and $\mathcal O$ is complete.

By an \emph{$\mathcal O$-lattice}, we mean a finitely generated free $\mathcal O$-module.   If $V$ is a finite dimensional $F$-vector space, then a \emph{full lattice} $L$ in $V$ is an $\mathcal O$-lattice $L\leq M$ containing a basis for $V$ (or equivalently, spanning $V$ over $F$).  In this case, $V\cong F\otimes_{\mathcal O} L$. Note that any $\mathcal O$-basis for $L$ is an $F$-basis for $V$.   Every finite dimensional $F$-vector space $V$ contains a full lattice.

If $A$ is a finite dimensional $F$-algebra, then an $\mathcal O$-subalgebra $R\leq A$ is an \emph{order} in $A$ if it is a full lattice. Much of the literature focuses on orders in semisimple algebras, but we shall need the general case.  Every finite dimensional $F$-algebra has an order.  We will be interested in the case where $M$ is a finite monoid, in which case $\mathcal OM$ is an order in the monoid algebra $FM$.

Let $R$ be a fixed order in the finite dimensional $F$-algebra $A$.
Note that $\ov R=R/\mathfrak mR\cong k\otimes_{\mathcal O} R$ is a finite dimensional $k$-algebra of $k$-dimension $\dim_F A$.

If we assume that $\mathcal O$ is complete, standard arguments~\cite{CurtisReinerI,Thevenaz,LamBook} show that $\rad^r R\subseteq\mathfrak mR\subseteq \rad R$ for some $r\geq 2$, $R/\rad R\cong \ov R/\rad \ov R$ is a finite dimensional semisimple $k$-algebra and idempotents lift modulo $\rad R$ and modulo $\mathfrak mR$. It follows that $R$ is a semiperfect ring (and so the regular module $R$ has a Krull-Schmidt-Azumaya decomposition) and the natural map $K_0(R)\to K_0(\ov R)$ given by $[P]\mapsto [k\otimes_{\mathcal O} P]$ is an isomorphism.    Therefore, we shall often identify $K_0(R)$ with $K_0(\ov R)$. Note that $k\otimes_{\mathcal O}P \cong P/\mathfrak mP$.   Moreover, the simple $R$-modules are precisely the simple $\ov R$-modules, inflated to $R$, and if $P$ is the projective cover of a simple $\ov R$-module $S$, inflated to an $R$-module, then $P/\mathfrak m P$ is the projective cover of $S$ as an $\ov R$-module.

 From now one we shall assume that $F$ is a splitting field for $A$ and $k$ is a splitting field for $\ov R$.
The analogue of Brauer's cde-triangle in this setup (with $\mathcal O$ complete) is, in fact,  a square when $A$ is not assumed semisimple (cf.~\cite{WebsterMO}).

First note that $A\cong F\otimes_{\mathcal O} R$ as $F$-algebras, and so we have a well-defined homomorphism $e\colon K_0(\ov R)\to K_0(A)$ given by $[\ov P]\mapsto [F\otimes_{\mathcal O} P]$ where $P$ is a projective $R$-module with $P/\mathfrak mP\cong \ov P$.
Our next goal is to define a decomposition homomorphism $d\colon G_0(A)\to G_0(\ov R)$.  The Brauer-Nesbitt approach for group algebras works \textit{mutatis mutandis}.  We sketch the proof for completeness, claiming no originality.
For the moment we just assume that $\mathcal O$ is a discrete valuation ring.  We shall reimpose completeness shortly.

We call a finitely generated $R$-module $M$ an \emph{$R$-lattice} if it is an $\mathcal O$-lattice when viewed as an $\mathcal O$-module.  Notice that every finitely generated projective $R$-module is an $R$-lattice since $R$ is an $\mathcal O$-lattice.   We say that an $R$-lattice $M$ is an \emph{$R$-form} of an $A$-module $V$ if $V\cong F\otimes_{\mathcal O} M$.  If $V$ is an $A$-module and $M$ is an $R$-submodule, which is a full $\mathcal O$-lattice, then we call $M$ a \emph{full} $R$-lattice in $V$.  If $M$ is a full $R$-lattice in $V$, then $M$ is an $\mathcal O$-form of $V$ as the natural map $F\otimes_{\mathcal O}M\to V$ is an isomorphism.  Conversely, every $\mathcal O$-form of $V$ is isomorphic to a full $R$-lattice in $V$ as $1\otimes M$ is a full $R$-lattice in $F\otimes_{\mathcal O} M$ for an $R$-lattice $M$.

 Note that every finite dimensional $A$-module $V$ has an $R$-form.  Indeed, fix a full $\mathcal O$-lattice $L$ in $V$. Then $M=RL$ is an $R$-submodule of $V$.  Moreover, it is finitely generated as an $R$-module since if $b_1,\ldots, b_r$ form an $\mathcal O$-basis for $L$, then they generate $M$ as an $R$-module.  Since $L\subseteq M$ and $L$ is a full $\mathcal O$-lattice, we deduce that $M$ is a full $R$-lattice in $V$.

\begin{Prop}[Brauer-Nesbitt]\label{p:BN}
Let $\mathcal O$ be a discrete valuation ring with maximal ideal $\mathfrak m$, field of fractions $F$ and residue field $k=\mathcal O/\mathfrak m$. Let $A$ be a finite dimensional $F$-algebra and $R$ an order in $A$. Let $\ov R=R/\mathfrak mR$.   Then there is a homomorphism $d\colon G_0(A)\to G_0(\ov R)$ defined by $d([V]) = [k\otimes_{\mathcal O} M]$ where $M$ is an $R$-form of $V$.
\end{Prop}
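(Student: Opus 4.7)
The plan is to verify two things: (i) for an $R$-form $M$ of a finite dimensional $A$-module $V$, the class $[k\otimes_{\mathcal O}M]\in G_0(\ov R)$ depends only on $[V]\in G_0(A)$ and not on $M$; and (ii) the resulting assignment respects the short exact sequence relations that cut out $G_0(A)$. Existence of an $R$-form for every finitely generated $A$-module was already established in the preamble, so I only need to handle these two points.

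For (i), I would reduce to the case of two full $R$-lattices $M_1,M_2$ inside a single finite dimensional $A$-module $V$ (using that any $R$-form of $V$ is isomorphic to such a full lattice). Let $\pi$ be a uniformizer of $\mathfrak m$. Multiplication by $\pi^n$ is an $\ov R$-module isomorphism $M_2/\pi M_2\cong \pi^n M_2/\pi^{n+1} M_2$, so replacing $M_2$ by $\pi^n M_2$ preserves the reduction; choosing $n$ large enough I may assume $M_2\subseteq M_1$. Since $M_1/M_2$ is a finitely generated torsion $\mathcal O$-module, it has finite $\mathcal O$-length, and I would induct on this length. For the step, pick an $R$-submodule $M_3$ maximal with $M_2\subseteq M_3\subsetneq M_1$; the $R$-simple quotient $M_1/M_3$ is $\mathcal O$-torsion, so $\pi$ must annihilate it and $\pi M_1\subseteq M_3$. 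The two short exact sequences of $\ov R$-modules
\[
0\to M_3/\pi M_1\to M_1/\pi M_1\to M_1/M_3\to 0,
\]
\[
0\to \pi M_1/\pi M_3\to M_3/\pi M_3\to M_3/\pi M_1\to 0,
\]
together with the $R$-linear isomorphism $M_1/M_3\cong \pi M_1/\pi M_3$ induced by multiplication by $\pi$, give $[M_1/\pi M_1]=[M_3/\pi M_3]$ in $G_0(\ov R)$, and the induction hypothesis applied to $M_2\subseteq M_3$ closes the argument.

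For (ii), given a short exact sequence $0\to L\to V\to N\to 0$ of finite dimensional $A$-modules, I would pick any $R$-form $M_V$ of $V$ (realized as a full $R$-lattice in $V$) and set $M_L=L\cap M_V$ and $M_N=M_V/M_L$. Then $M_L$ is a submodule of the finitely generated free $\mathcal O$-module $M_V$, hence itself $\mathcal O$-free of finite rank; and it is a full lattice in $L$ because for each $\ell\in L$ some $\pi^k\ell$ lies in $M_V$. The quotient $M_N$ is $\mathcal O$-torsion-free (if $\pi x\in M_L$ then $\pi x\in L$, so $x\in L\cap M_V=M_L$), hence an $\mathcal O$-lattice, and by construction $F\otimes_{\mathcal O}M_N\cong N$, so $M_N$ is an $R$-form of $N$. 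Because $M_N$ is $\mathcal O$-flat, tensoring $0\to M_L\to M_V\to M_N\to 0$ with $k$ over $\mathcal O$ remains exact, yielding $d([V])=d([L])+d([N])$ in $G_0(\ov R)$.

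The main obstacle is step (i): the naive comparison of $M_1/\pi M_1$ and $M_2/\pi M_2$ is not obvious because these need not be isomorphic even when $M_2\subseteq M_1$. The key device is the $\pi$-filtration and the observation that multiplication by $\pi$ is an $R$-isomorphism of sublattices in $V$, which converts the set-theoretic comparison of reductions into an equality of classes in $G_0(\ov R)$. Once this is in hand, step (ii) is routine lattice bookkeeping, and no completeness assumption on $\mathcal O$ is needed.
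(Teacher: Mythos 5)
Your proposal is correct and follows essentially the same route as the paper: reduce the well-definedness claim to nested full $R$-lattices, induct on the length of the quotient using the two short exact sequences together with the multiplication-by-$\pi$ isomorphism $M_1/M_3\cong \pi M_1/\pi M_3$, and then handle additivity by producing compatible lattices in a short exact sequence and using $\mathcal O$-freeness of the quotient lattice to keep the sequence exact after reduction mod $\mathfrak m$. The only (cosmetic) differences are that the paper reduces to the nested case via $L+M$ rather than by rescaling with $\pi^n$, justifies $\pi(M_1/M_3)=0$ by citing $\mathfrak m R\subseteq\rad R$ rather than your direct torsion/Nakayama argument, and realizes the sublattice as a preimage of a kernel rather than as an intersection.
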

\begin{proof}
The first step is to show that if $L$ and $M$ are full $R$-lattices in $V$, then in $G_0(\ov R)$ we have $[L/\mathfrak mL]=[M/\mathfrak mM]$.  Note that $L+M$ is a full $R$-lattice in $V$.  Thus, without loss of generality we may assume $L\leq M$.  Note that $M/L$ is a finitely generated torsion $\mathcal O$-module since $L$ has full rank in $M$.  Thus $M/L$ is a finite length $\mathcal O$-module and hence a finite length $R$-module.  A simple induction on length reduces to the case $L$ is a maximal $R$-submodule of $M$.  In this case $M/L$ is a simple $R$-module.  But since $\mathfrak mR\subseteq \rad R$, we in fact have that $\mathfrak m(M/L)=0$, whence $M/L$ is an $\ov R$-module and $\mathfrak mM\subseteq L$.  Thus we have
\[\mathfrak mL\leq \mathfrak mM\leq L\leq M\]
leading to exact sequences of $\ov R$-modules
\begin{gather*}
0\to L/\mathfrak mM\to M/\mathfrak mM\to M/L\to 0\\ 0\to \mathfrak mM/\mathfrak mL\to L/\mathfrak mL\to L/\mathfrak mM\to 0.
\end{gather*}
Since $\mathcal O$ is a discrete valuation ring, we have $\mathfrak m = (\pi)$ for some element $\pi\in R$.  Then $M/L\cong \pi M/\pi L=\mathfrak mM/\mathfrak mL$ via $m+L\mapsto \pi m+\pi L$.  Thus in $G_0(\ov R)$, we have  $[M/\mathfrak mM] = [L/\mathfrak mM]+[M/L] = [L/\mathfrak mM]+[\mathfrak mM/\mathfrak mL]= [L/\mathfrak m L]$, as required.

To see that $d$ induces a well-defined group homomorphism, let $0\to V_1\xrightarrow{\psi} V_2\xrightarrow{\p} V_3\to 0$ be an exact sequence of finitely generated $A$-modules.  Let $M_2$ be a full $R$-lattice in $V_2$.  Then $M_3=\p(M_2)$ is a full $R$-lattice in $V_3$.  Let $M_1=\psi\inv(\ker \p|_{M_2})$.  Then we have
\begin{equation}\label{eq:rforms}
0\to M_1\xrightarrow{\psi} M_2\xrightarrow{\p} M_3\to 0
\end{equation}
 is an exact sequence of $R$-modules.  Since tensoring with $F$ is exact, $\dim_F V_1=\dim_F F\otimes_{\mathcal O}M_1$, and so we deduce that $M_1$ is a full $R$-lattice in $V_1$.  Also, since $M_3$ is a free $\mathcal O$-module, the sequence \eqref{eq:rforms} splits over $\mathcal O$, and hence tensoring it with $k$ over $\mathcal O$ yields an exact sequence $0\to k\otimes_{\mathcal O} M_1\to k\otimes_{\mathcal O} M_2\to k\otimes_{\mathcal O} M_3\to 0$.  It now follows that $d$ is a well-defined group homomorphism.
\end{proof}

The homomorphism $d$ is called the \emph{decomposition map}.   Note that if $V$ is a finitely generated $A$-module and $d([V])=0$, then $V=0$.  Indeed, if $M$ is an $\mathcal O$-form of $V$, then $0=d([V]) = M/\mathfrak mM$, and so $M=0$ by Nakayama's lemma as $M$ is finitely generated as an $\mathcal O$-module.  Thus $V=0$.

We are now ready for the analogue of Brauer's result for an order in a not necessarily semisimple algebra.

\begin{Thm}\label{t:brauer.square}
Let $\mathcal O$ be a complete discrete valuation ring with field of fractions $F$ and residue field $k$.  Let $R$ be an $\mathcal O$-order in a finite dimensional $F$-algebra $A$.  Let $\ov R=R/\mathfrak mR$.  Then we have a commutative diagram
\[\begin{tikzcd}
    K_0(A)\ar{r}{C} & G_0(A)\ar{d}{d}\\ K_0(\ov R)\ar{r}{c}\ar{u}{e} & G_0(\ov R)
  \end{tikzcd}\]
  where $C,c$ are the Cartan homomorphisms, $e([P/\mathfrak mP]) = [F\otimes_{\mathcal O} P]$ and $d$ is the decomposition map.  Moreover, if $F$ and $k$ are splitting fields for $A$ and $\ov R$, respectively, then $e$ is the adjoint of $d$ with respect to the usual perfect pairings.
\end{Thm}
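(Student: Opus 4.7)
The plan is to verify commutativity by chasing a projective indecomposable $\ov R$-module around the two paths in the diagram, then to establish adjointness by reducing both pairings to the $\mathcal O$-rank of a single $\Hom$-lattice.

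For commutativity, I would first note that since $\mathcal O$ is complete, idempotents lift modulo $\mathfrak mR$, so every projective indecomposable $\ov R$-module $\ov P$ arises as $P/\mathfrak mP$ for a projective indecomposable $R$-module $P$; indeed, any such $P$ is an $\mathcal O$-direct summand of a free $R$-module of finite rank, hence a finitely generated projective, and therefore free, $\mathcal O$-module (as $\mathcal O$ is a PID). Thus $P$ is an $R$-lattice. Chasing $[\ov P]$ around the right-hand path: $e([\ov P])=[F\otimes_{\mathcal O}P]$, and because $P$ is itself an $R$-form of $F\otimes_{\mathcal O}P$, we get $d(C(e([\ov P])))=[k\otimes_{\mathcal O}P]=[\ov P]$ in $G_0(\ov R)$. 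On the left-hand side $c([\ov P])$ is just $[\ov P]$ regarded in $G_0(\ov R)$. Since the classes of projective indecomposables generate $K_0(\ov R)$, this shows $d\circ C\circ e=c$.

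For the adjointness statement, assuming $F$ and $k$ are splitting fields, I need to show that for any projective indecomposable $\ov R$-module $\ov P$ (with lift $P$) and any finitely generated $A$-module $V$ with $R$-form $M$,
\[
\dim_F\Hom_A(F\otimes_{\mathcal O}P,\,V)\;=\;\dim_k\Hom_{\ov R}(\ov P,\,k\otimes_{\mathcal O}M).
\]
I would compute both sides as the $\mathcal O$-rank of $H:=\Hom_R(P,M)$. By the tensor-hom adjunction and $F$-flatness over $\mathcal O$, together with finite generation of $P$, the left side equals $\dim_F\Hom_R(P,V)=\dim_F(F\otimes_{\mathcal O}H)=\rk_{\mathcal O}H$. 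For the right side, since $P$ is finitely generated projective, reducing to the case $P=R$ and then taking summands of $R^n$ shows $k\otimes_{\mathcal O}\Hom_R(P,M)\cong \Hom_{\ov R}(P/\mathfrak mP,\,M/\mathfrak mM)$, whence $\dim_k\Hom_{\ov R}(\ov P,k\otimes_{\mathcal O}M)=\dim_k(k\otimes_{\mathcal O}H)=\rk_{\mathcal O}H$. Here I use that $H$ is a submodule of the finitely generated free $\mathcal O$-module $\Hom_{\mathcal O}(P,M)$, hence free over the PID $\mathcal O$, so its $F$-rank and $k$-rank coincide with its $\mathcal O$-rank.

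The main obstacle is checking that the two flat base change isomorphisms for $\Hom$ hold in this generality: the $F$-base change uses only that $P$ is finitely presented and $F$ is flat, which is routine, but the $k$-base change requires $P$ to be finitely generated projective over $R$ (which we have). Once these two compatibilities are in hand, both sides of the desired equality collapse to $\rk_{\mathcal O}H$ and the theorem is established on basis elements; extending by bilinearity completes the proof.
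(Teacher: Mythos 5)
Your proposal is correct and follows essentially the same route as the paper: commutativity via the observation that a projective $R$-lattice $P$ is an $R$-form of $F\otimes_{\mathcal O}P$, and adjointness by identifying both sides of the pairing with the $\mathcal O$-rank of $\Hom_R(P,M)$. The only differences are cosmetic: you justify the two base-change isomorphisms for $\Hom$ by tensor-hom adjunction plus finitely-presented flat base change (on the $F$-side) and by reduction to free modules and direct summands (on the $k$-side), whereas the paper uses a matrix identification and the surjectivity of $\Hom_R(P,M)\to\Hom_{\ov R}(P/\mathfrak mP,M/\mathfrak mM)$ with kernel $\pi\Hom_R(P,M)$; both are valid.
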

\begin{proof}
The commutativity of the diagram is straightforward since if $P$ is a projective $R$-module, then $P$ is an $R$-form of $F\otimes_{\mathcal O}P$ and so $dCe([P/\mathfrak mP])=dC([F\otimes_{\mathcal O}P]) = d([F\otimes_{\mathcal O} P]) = [P/\mathfrak mP] = c([P/\mathfrak mP])$.

For the final statement, we observe that if $M,N$ are $R$-lattices, then $\Hom_R(M,N)$ is a full $\mathcal O$-lattice in $\Hom_A(F\otimes_{\mathcal O}M,F\otimes_{\mathcal O} N)$.  The easiest way to see this is to note that if $M$ and $N$ have ranks $m,n$ respectively, then, choosing $\mathcal O$-bases for $M$ and $N$, we can identify $\Hom_R(M,N)$ with $n\times m$-matrices over $\mathcal O$ intertwining the $R$-actions and $\Hom_A(F\otimes_{\mathcal O}M,F\otimes_{\mathcal O} N)$ with $n\times m$-matrices over $F$ intertwining the $R$-actions, and the former is clearly a full $\mathcal O$-lattice in the latter.  We conclude that if $P$ is a finitely generated projective $R$-module and $V$ is a finitely generated $A$-module with $R$-form $M$, then
\[\dim_F(e([P/\mathfrak mP]),V) = \dim_F \Hom_A(F\otimes_{\mathcal O} P,V) = \dim_F F\otimes_{\mathcal O}\Hom_R(P,M).\]  On the other hand, the natural homomorphism \[\Hom_R(P,M)\to \Hom_{\ov R}(P/\mathfrak mP,M/\mathfrak mM)\] is surjective (since $P$ is projective) and has kernel $\Hom_R(P,\mathfrak mM)$. Choose $\pi\in R$ with $\mathfrak m = (\pi)$.  Then evidently, $\Hom_R(P,\mathfrak mM) =\Hom_R(P,\pi M)= \pi\Hom(P,M)$.  Therefore, \[\Hom_{\ov R}(P/\mathfrak mP,M/\mathfrak mM)\cong \Hom_R(P,M)/\pi\Hom_R(P,M)\cong k\otimes_{\mathcal O} \Hom_R(P,M).\]  We deduce that $\langle e([P/\mathfrak mP]),[V]\rangle = \dim_F (F\otimes_{\mathcal O}\Hom_R(P,M))=\dim_k (k\otimes_{\mathcal O}\Hom_R(P,M)) = \dim_k \Hom_{\ov R}(P/\mathfrak mP,M/\mathfrak mM) = \langle [P/\mathfrak mP], d([M])\rangle$, as all these numbers are the rank of $\Hom_R(P,M)$ as a free $\mathcal O$-module.  This completes the proof.
\end{proof}

\subsection{Monoids and their algebras}
References for the theory of finite monoids are~\cite{Arbib,qtheor} or~\cite[Chapter~1]{repbook}.  The reader may also consult~\cite{CP} for the algebraic theory of semigroups.
Let $M$ be a finite monoid.  An ideal in a monoid $M$ is a subset $I$ with $MIM\subseteq I$. Left and right ideals are defined analogously.
Two elements $m,n\in M$ are $\mathscr J$-equivalent~\cite{Green}, if they generate the same principal ideal, that is, $MmM=MnM$.  They are $\mathscr L$-equivalent if $Mm=Mn$ and $\mathscr R$-equivalent if $mM=nM$.  The $\mathscr J$-class (respectively, $\mathscr L$-class and $\mathscr R$-class) of $m$ is denoted $J_m$ (respectively, $L_m$ and $R_m$).  An element $m\in M$ is (von Neumann) \emph{regular} if $m=mnm$ for some $n\in M$.  The monoid $M$ is \emph{regular} if all its elements are regular.  Note that $m$ is regular if and only if $J_m$ contains an idempotent, if and only if $L_m$ (or equivalently, $R_m$) contains an idempotent.   A $\mathscr J$-class containing an idempotent is called a \emph{regular} $\mathscr J$-class.  We denote by $E(M)$ the set of idempotents of a monoid $M$.

If $e\in M$ is an idempotent, then $eMe$ is a monoid with identity $e$.  The group of units of $eMe$ is denoted $G_e$ and called the \emph{maximal subgroup} of $M$ at $e$.  Note that if $MeM=MfM$, then $eMe\cong fMf$ and $G_e\cong G_f$.  Also note that $G_e$ acts freely on the right of $L_e$ and on the left of $R_e$ by multiplication.

A \emph{principal series} for $M$ is an unrefinable series
\begin{equation}\label{eq:principal.series}
\emptyset=I_0\subsetneq I_1\subsetneq\cdots \subsetneq I_n=M
\end{equation}
 of ideals of $M$.  It is well known that any two principal series have the same length and that the set differences $I_k\setminus I_{k-1}$ run through all the $\mathscr J$-classes of $M$ exactly once.

Munn~\cite{Munn1} and Ponizovskii~\cite{Poni}, independently, constructed the irreducible representations of a finite monoid, building on earlier work of Clifford~\cite{Clifford2}. We summarize the module theoretic approach to their work~\cite{gmsrep,repbook}.
Let $e\in M$ be an idempotent and let $I$ be any ideal of $M$ with $Ie=Me\setminus L_e$ (e.g., $I=I_{k-1}$ in a principal series \eqref{eq:principal.series} with $J_e=I_k\setminus I_{k-1}$).  If $K$ is a field, then $e(KM/KI)e\cong KG_e$ and $(KM/KI)e$ is a $KM$-$KG_e$-bimodule (free as a right $KG_e$-module) with basis the cosets of the elements of $L_e$.  Hence, we may identify $(KM/KI)e$ with $KL_e$.  With this identification the right $G_e$-action is just via multiplication, whereas the left action of $m\in M$ is given by
\begin{equation}\label{eq:schutz}
m\cdot \ell = \begin{cases} m\ell, &\text{if}\ m\ell\in L_e\\ 0, & \text{else}\end{cases}
\end{equation}
 for $\ell\in L_e$.  If $V$ is a simple $KG_e$-module, then $\Ind_{G_e}(V) = KL_e\otimes_{KG_e} V$ is an indecomposable $KM$-module with simple top $\Ind_{G_e}(V)/\rad(\Ind_{G_e}(V))$.    If $e_1,\ldots, e_t$ are idempotent representatives  of the regular $\mathscr J$-classes of $M$, then, for each simple $KM$-module $S$, there is a unique idempotent $e_i$ from this list (called the \emph{apex} of $S$) with $e_iS\neq 0$  and $e_jS=0$ whenever $Me_iM\nsubseteq Me_jM$.  Moreover, $e_iS$ is a simple $KG_{e_i}$-module and \[S\cong \Ind_{G_{e_i}}(e_iS)/\rad(\Ind_{G_{e_i}}(e_iS)).\]   See~\cite[Theorem~5.5]{repbook} for details.  Notice that, in particular, there is a bijection between irreducible representations of $M$ and the combined set of irreducible representations of $G_{e_1},\ldots, G_{e_t}$.

The character theory of monoids was developed independently by McAlister~\cite{McAlisterCharacter} and Rhodes and Zalcstein~\cite{RhodesZalc} in characteristic zero (see also~\cite[Chapter~7]{repbook}).  The theory over a general field was developed in~\cite{ourcharacter} (some less detailed results can also be found in~\cite{guralnick}).  The reader is referred to these references for details.

We retain the previous notation. If $m\in M$, then there are least integers $i,k$ with $i\geq 0$, $k>0$ and $m^i=m^{i+k}$.  One calls $i$ the \emph{index} of $m$ and $k$ the \emph{period} of $m$.  There is a unique idempotent in the subsemigroup generated by $m$, and it is denoted $m^{\omega}$. If $m$ has index $i$ and period $k$, then this idempotent is $m^r$ where $r\geq i$ and $r\equiv 0\bmod k$; for instance $m^{\omega}=m^{isk}$ for any  $s>0$.  We denote $mm^{\omega}$ by $m^{\omega+1}$.  Note that $m^{\omega+1}$ belongs to the maximal subgroup $G_{m^{\omega}}$ and has order the period of $m$.

Two elements $m_1,m_2\in M$ are \emph{generalized conjugates}, written $m_1\sim m_2$, if there exist $x,x'\in M$ with $xx'x=x$, $x'xx'=x'$, $x'x=m_1^{\omega}$, $xx'=m_2^{\omega}$ and $xm_1^{\omega+1}x'=m_2^{\omega+1}$.  It is straightforward to check that generalized conjugacy is the same as conjugacy if $M$ is a group.

Generalized conjugacy is an equivalence relation and it is the least equivalence relation on $M$ such that $mm'\sim m'm$ and $m\sim m^{\omega+1}$ for all $m,m'\in M$.  Each generalized conjugacy class intersects exactly one of the maximal subgroups $G_{e_1},\ldots, G_{e_t}$, and it intersects that maximal subgroup in a conjugacy class.  Thus we may choose a set of generalized conjugacy class representatives by choosing a representative of each conjugacy class of $G_{e_1},\ldots, G_{e_t}$.  In particular, if follows that the number of generalized conjugacy classes is the number of simple $KM$-modules for any splitting field $K$ of $M$ whose characteristic does not divide the order of any maximal subgroup by the Clifford-Munn-Ponizovskii theorem.  We remark that if $M$ is regular, then generalized conjugacy is simply the least equivalence relation on $M$ such that $mm'\sim m'm$ for all $m\in M$ by a result from~\cite{Mazorchuk}.

If $p>0$ is a prime, then an element of a group is called \emph{$p$-regular} it its order is prime to $p$.  If $p=0$, then all elements are considered $p$-regular.  If $g$ is an element of a group, then $g$ can be uniquely factored in the form $g=g_pg_{p'}$ where $g_p$ has $p$-power order, $g_{p'}$ is $p$-regular and $g_pg_{p'}=g_{p'}g_p$.  If $m$ is an element of a monoid, then we define $m$ to be \emph{$p$-regular} if $m^{\omega+1}$ is $p$-regular (in the group $G_{m^{\omega}})$, or, equivalently, $p$ does not divide the period of $m$.  If $m\sim m'$, then $m$ is $p$-regular if and only if $m'$ is $p$-regular, and so we may speak of $p$-regular generalized conjugacy classes.  Each $p$-regular generalized conjugacy class intersects a unique maximal subgroup $G_{e_i}$ from $G_{e_1},\ldots, G_{e_n}$, and it intersects it in a $p$-regular conjugacy class.  In particular, it follows from the Clifford-Munn-Ponizovskii theorem and Brauer's counting of the number of irreducible modular representations of a finite group~\cite[Theorem~9.3.6]{Webbbook} that if $K$ is a splitting field for $M$ of characteristic $p$, then the number of simple $KM$-modules is the number of $p$-regular generalized conjugacy classes of $M$.

If $K$ is a field, then a $K$-valued \emph{class function} on $M$ is a mapping $f\colon M\to K$ which is constant on generalized conjugacy classes.  If $V$ is a finitely generated $KM$-module, then the \emph{character} $\chi_V\colon M\to K$ is the mapping given by setting $\chi_V(m)$ equal to the trace of the linear operator on $V$ given by multiplication by $m$.  An \emph{irreducible} character is the character of a simple $KM$-module.   The character of any module is a class function and if $K$ is a splitting field of characteristic $0$, then the irreducible characters of $M$ form a basis for the space of $K$-valued class functions.  If $K$ has characteristic $p>0$ and $\chi$ is a character of a $KM$-module, then $\chi(m) = \chi(m^{\omega+1}_{p'})$ for all $m\in M$ (retaining the previous notation).  Moreover, if $K$ is a splitting field then the irreducible characters form a basis for the class functions on $M$ satisfying the additional condition $f(m) = f(m^{\omega+1}_{p'})$.  In particular, the characters of $M$ over $K$ are linearly independent (this is true for any finite dimensional algebra over a splitting field) and determined by their values on $p$-regular conjugacy classes.

We shall also need McAlister's characterization of virtual characters.  Recall that a virtual character is a difference of characters.  The mapping $[V]\mapsto \chi_V$ provides an isomorphism between $G_0(KM)$ and the ring of virtual characters for any field $K$ (by the results of~\cite{ourcharacter} or~\cite{guralnick}).  McAlister showed that if $K$ is a splitting field of characteristic $0$, then a class function $f$ is a virtual character if and only if $f|_{G_e}$ is a virtual character for each maximal subgroup $G_e$ of $M$ (cf.~\cite[Corollary~7.16]{repbook}).  This, combined with Brauer's characterization of virtual characters for a group, provides an effective tool to determine if a class function is a virtual character.

\subsection{Quivers and bound quivers}
A \emph{quiver} $Q$ is a finite directed multigraph (possibly with loop edges).   The vertex set of $Q$ is denoted $Q_0$ and the edge set $Q_1$. More generally, $Q_n$ will denote the set of (directed) paths of length $n$ with $n\geq 0$ (where the distinction between empty paths and vertices is ignored).  Basic references about quivers and bound quivers are~\cite{assem,benson}.

The \emph{path algebra} $RQ$ of $Q$ over a commutative ring $R$ has $R$-basis consisting of all paths in $Q$, including an empty path $\varepsilon_v$ at each vertex, with the product induced by concatenation (where undefined concatenations are set equal to zero).  Here paths are composed from right to left: so if $p\colon v\to w$ and $q\colon w\to z$ are paths, then their composition is denoted $qp$.  This convention is because we work with left modules. Note that $RQ=\bigoplus_{n\geq 0} RQ_n$. The identity of $RQ$ is the sum of all empty paths.
The arrow ideal $J$ is the ideal of $RQ$ generated by all edges, that is, $J=\bigoplus_{n\geq 1} RQ_n$.

A finite dimensional $K$-algebra $A$ is \emph{split} if each simple $A$-module is absolutely simple or, equivalently, $A/\rad(A)$ is isomorphic to a direct product of matrix algebras over $K$.  The algebra $A$ is \emph{split basic} if $A/\rad(A)\cong K^n$ for some $n\geq 1$. (Generally speaking, $A$ is \emph{basic} if $A/\rad(A)$ is a product of division algebras.)  Every split $K$-algebra is Morita equivalent to a unique (up to isomorphism) split basic $K$-algebra, and this algebra can be described in terms of a bound quiver.

Suppose that $A$ is a finite dimensional algebra and $1=\varepsilon_1+\cdots+\varepsilon_s$ is a decomposition into orthogonal primitive idempotents.  Note that if $e,f$ are primitive idempotents of $A$, then $Ae\cong Af$ if and only if $e,f$ are conjugate.  Without loss of generality, we may assume that $\varepsilon_1,\ldots, \varepsilon_k$ are pairwise nonconjugate and represent all the conjugacy classes of primitive idempotents of $A$.  Note that $k=s$ if and only if $A$ is basic.  Let $e=e_1+\cdots +e_k$.  Then $e$ is a full idempotent, i.e., $A=AeA$,  and so $B=eAe$ is Morita equivalent to $A$.  The algebra $B$ is basic and is called the \emph{basic algebra} of $A$ and $e$ is called a \emph{basic idempotent}.  Note that $A$ is split if and only if $B$ is split.

If $K$ is a field, an ideal $I$ of $KQ$ is called \emph{admissible} if there exists $n\geq 2$ with $J^n\subseteq I\subseteq J^2$.   A pair $(Q,I)$ consisting of a quiver $Q$ and an admissible ideal $I$ of $KQ$ is called a \emph{bound quiver}.  If $(Q,I)$ is a bound quiver, then $KQ/I$ is a split basic finite dimensional algebra with Jacobson radical $J/I$. Conversely, if $A$ is a split basic $K$-algebra, there is a unique (up-to-isomorphism) quiver $Q(A)$ such that $A\cong KQ(A)/I$ for some admissible ideal $I$ (the ideal $I$ is not unique).  The bound quiver $(Q,I)$ is sometimes called a \emph{quiver presentation} for $A$.

The vertices of $Q(A)$ are the (isomorphism classes of) simple $A$-modules. If $S,S'$ are simple $A$-modules, then the number of directed edges $[S]\to [S']$ in $Q(A)$ is $\dim_K\Ext^1_A(S,S')$.  To explain how the admissible ideal $I$ is obtained, fix a complete set of orthogonal primitive idempotents $E$ for $A$, and let $e_S\in E$ be the idempotent with $S\cong (A/\rad(A))e_S$. One has that \[\Ext^1_A(S,S')\cong e_{S'}[\rad(A)/\rad^2(A)]e_S\] as $K$-vector spaces~\cite{assem,benson}.  One can define a homomorphism $KQ(A)\to A$  by sending the empty path at $S$ to $e_S$ and by choosing a bijection of the set of  arrows $S\to S'$ with a subset of $e_{S'}\rad(A)e_S$ mapping to a basis of $e_{S'}[\rad(A)/\rad^2(A)]e_S$. The map is extended to paths of length greater than one in the obvious way.  Such a homomorphism is automatically surjective and the kernel is an admissible ideal.  If $(Q,I)$ is a bound quiver, then the quiver of $KQ/I$ is isomorphic to $Q$ with the vertex $v$ corresponding to the simple module $[(KQ/I)/(J/I)]\varepsilon_v$.

More generally, we define the quiver of a split $K$-algebra to be that of its basic algebra, and similarly for quiver presentations.  Note that if $e$ is a basic idempotent of $A$, then $\rad(eAe)=e\rad(A)e$ and $\rad^2(eAe) = e\rad(A)e\rad(A)e=e\rad(A)AeA\rad(A)e=e\rad^2(A)e$, since $AeA=A$.  It follows that if $e=e_1+\cdots+e_k$ is a decomposition into orthogonal primitive idempotents, then $e_i[\rad(A)/\rad^2(A)]e_j\cong e_i[\rad(eAe)/\rad^2(eAe)]e_j$, a fact that we shall use later without comment.

\section{Modular representation theory of monoids}
This section sketches the basics of monoid modular representation theory.  This does not seem to be a well-developed subject, and mostly there are
preliminary results concerning Brauer characters~\cite{guralnick,PutchaBrauer} and characters~\cite{guralnick,ourcharacter}.

Recall that Brauer's theorem~\cite[Theorem~17.1]{CurtisReinerI}) says that if $G$ is a group of exponent $n$ and $K$ is a field containing a primitive $n^{th}$-root of unity, then $K$ is a splitting field for $KG$.  Brauer's theorem almost immediately generalizes to monoids, thanks to Clifford-Munn-Ponizovskii theory.  We give a module theoretic treatment, although this is obvious from matrix theoretic treatments (cf.~\cite{RhodesZalc}).

\begin{Prop}\label{p:radical.extension}
Let $M$ be a finite monoid and $K$ a field.  Let $F\leq K$ be the prime field.  Let $L/K$ be a field extension.  Identifying $LM$ with $L\otimes_K KM$ and $KM$ with $K\otimes_F FM$, the following hold.
\begin{enumerate}
\item $\rad KM = K\otimes_F \rad FM$.
\item $\rad LM= L\otimes_K \rad KM$.
\item If $V$ is a finitely generated $KM$-module, then $\rad (L\otimes_K V)= L\otimes_K \rad V$.
\end{enumerate}
\end{Prop}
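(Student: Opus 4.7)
The plan is to reduce all three parts to the classical fact that a finite-dimensional semisimple algebra over a perfect field is separable, together with the standard inclusion $\rad V = (\rad A)V$ for finitely generated modules over a finite-dimensional algebra.

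For (1), I would start by observing that the prime field $F$ is either $\mathbb{Q}$ or $\mathbb{F}_p$, and in both cases $F$ is perfect. Since $M$ is finite, $FM$ is finite-dimensional, so $\ov{FM} := FM/\rad FM$ is a finite-dimensional semisimple $F$-algebra. A standard argument (Wedderburn gives $\ov{FM} \cong \prod M_{n_i}(D_i)$ with each $D_i$ a division algebra; since $F$ is perfect, the center of each $D_i$ is a separable extension of $F$, and therefore $\ov{FM} \otimes_F \overline{F}$ is semisimple) shows $\ov{FM}$ is a separable $F$-algebra. Consequently $K \otimes_F \ov{FM}$ is semisimple for every field extension $K/F$. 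Under the identification $KM \cong K \otimes_F FM$, this gives that $KM/(K \otimes_F \rad FM)$ is semisimple. On the other hand, $\rad FM$ is nilpotent, so $K \otimes_F \rad FM$ is a nilpotent ideal of $KM$, hence contained in $\rad KM$. The reverse inclusion is forced by the semisimplicity of the quotient, yielding $\rad KM = K \otimes_F \rad FM$.

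For (2), I would apply the same mechanism one level up. By (1), $KM/\rad KM \cong K \otimes_F \ov{FM}$ is a base change of the separable $F$-algebra $\ov{FM}$, and separability is preserved by base change, so $KM/\rad KM$ is separable over $K$. Therefore $L \otimes_K (KM/\rad KM) \cong LM/(L \otimes_K \rad KM)$ is semisimple, and $L \otimes_K \rad KM$ is again nilpotent, so it must equal $\rad LM$. For (3), I would use that for a finitely generated module $V$ over a finite-dimensional algebra $A$ we have $\rad V = (\rad A)V$. Applying this to $V$ over $KM$ and to $L \otimes_K V$ over $LM$, and invoking (2), gives
\[
\rad(L \otimes_K V) = (\rad LM)(L \otimes_K V) = (L \otimes_K \rad KM)(L \otimes_K V) = L \otimes_K ((\rad KM)V) = L \otimes_K \rad V,
\]
where the middle equality uses that tensor product over $K$ commutes with multiplication of modules by ideals (i.e., $I \cdot V$ and $(L \otimes_K I)(L \otimes_K V)$ have the same generators).

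The only substantive step is the separability of $\ov{FM}$, and even this is essentially automatic from the perfectness of the prime field. An alternative, more monoid-theoretic route would be to invoke the Clifford--Munn--Ponizovskii description: $\ov{FM}$ is a product of algebras Morita equivalent to $\ov{FG_{e_i}}$ for the maximal subgroup representatives $e_i$, each of which is separable over $F$ by the corresponding group-theoretic fact; this might be preferable in the context of the paper, but does not avoid the appeal to perfectness in the end. The rest is bookkeeping between the natural isomorphism $KM \cong K \otimes_F FM$ and nilpotence of radicals of finite-dimensional algebras.
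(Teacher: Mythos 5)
Your proof is correct and follows essentially the same route as the paper's: nilpotence of $K\otimes_F\rad FM$ plus separability of semisimple algebras over the perfect prime field for (1), and the identity $\rad V=(\rad A)V$ combined with (2) for (3). The only cosmetic difference is in (2), where the paper simply applies (1) to both $K/F$ and $L/F$ and uses transitivity of scalar extension ($L\otimes_K(K\otimes_F\rad FM)=L\otimes_F\rad FM$), whereas you re-run the separability argument one level up over $K$; both are valid.
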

\begin{proof}
Clearly $I=K\otimes_F\rad FM$ is a nilpotent ideal and hence contained in $\rad KM$.  On the other hand, $KM/I\cong K\otimes_F (FM/\rad FM)$.  Since $F$ is a perfect field, every semisimple $F$-algebra is separable and hence remains semisimple under base extension.  Thus $K\otimes_F (FM/\rad FM)$ is semisimple, and so $\rad KM\subseteq K\otimes_F\rad FM$.  This proves the first item.

The second follows from the first applied to $K$ and $L$, as $L\otimes_K\rad KM = L\otimes_K (K\otimes_F \rad FM)=L\otimes_F \rad FM=\rad LM$.

The final item follows because $\rad(L\otimes_K V) = (\rad LM)(L\otimes_K V) = (L\otimes_K \rad KM)(L\otimes_K V) = L\otimes_K \rad V$.
\end{proof}

We shall use throughout the following well-known fact on how tensor products interact with extension of scalars.

\begin{Lemma}\label{l:commute.with.ext}
Let $S$ be a commutative ring and $R$ a subring.  Let $A,B$ be $R$-algebras, $M$ an $A$-$B$-bimodule (with the same $R$-action on both sides) and $N$ a $B$-module.  Then $S\otimes_R(M\otimes_B N)\cong (S\otimes_R M)\otimes_{S\otimes_R B}(S\otimes_R N)$ as $S\otimes_R A$-modules.
\end{Lemma}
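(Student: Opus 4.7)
The plan is to construct mutually inverse $S\otimes_R A$-module homomorphisms directly, verifying well-definedness through the universal property of each tensor product in sight.

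First I would build the forward map
\[\phi\colon S\otimes_R(M\otimes_B N)\longrightarrow (S\otimes_R M)\otimes_{S\otimes_R B}(S\otimes_R N)\]
by sending $s\otimes(m\otimes n)$ to $(s\otimes m)\otimes(1\otimes n)$. To see this is well-defined, I would first note that the assignment $(m,n)\mapsto (1\otimes m)\otimes (1\otimes n)$ is $R$-bilinear and $B$-balanced (the latter because a $b\in B$ moves through the middle tensor as $1\otimes b$, and the hypothesis that $A$ and $B$ act with the same $R$-action makes the scalar transfers compatible), hence factors through a map $M\otimes_B N\to (S\otimes_R M)\otimes_{S\otimes_R B}(S\otimes_R N)$ which is further $R$-linear. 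Promoting via the $S$-action $s\cdot (x\otimes y)=(sx)\otimes y$ then gives $\phi$.

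Next I would build the reverse map $\psi$ in two stages. The pairing $(s,m)\mapsto s\otimes(m\otimes -)$ induces a well-defined $R$-bilinear map $S\times M\to \Hom_R(N,S\otimes_R(M\otimes_B N))$, and symmetrically for $N$. Packaging these I obtain an $R$-bilinear, and then by passage an $S$-balanced bilinear, map $(S\otimes_R M)\times (S\otimes_R N)\to S\otimes_R(M\otimes_B N)$ sending $(s\otimes m,t\otimes n)$ to $st\otimes(m\otimes n)$. The key check is that this map is $(S\otimes_R B)$-balanced: for $b\in B$ and the generator $1\otimes b$ of $S\otimes_R B$, balancedness reduces to $m\otimes bn = mb\otimes n$ in $M\otimes_B N$, which holds, and for scalars from $S$ it is immediate. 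This produces $\psi\colon (S\otimes_R M)\otimes_{S\otimes_R B}(S\otimes_R N)\to S\otimes_R(M\otimes_B N)$.

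Checking $\phi\psi=\mathrm{id}$ and $\psi\phi=\mathrm{id}$ is then a one-line verification on generators, and $S\otimes_R A$-linearity of $\phi$ is built in since the left $A$-action on $M$ is transported to the left $S\otimes_R A$-action on $S\otimes_R M$ in the only sensible way. The main obstacle I expect is purely bookkeeping: verifying that the map in the $\psi$ direction is $(S\otimes_R B)$-balanced without circularity, since one must know that the candidate assignment on simple tensors extends to all of $S\otimes_R B$, and this in turn relies on the hypothesis that the two $R$-actions on $M$ agree so that the relation $(rs)\otimes m = s\otimes(rm)$ can be moved across the middle factor without conflict. Once that compatibility is in place, everything else is formal.
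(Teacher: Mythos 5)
Your proof is correct, but it takes a different route from the paper's. The paper disposes of the lemma in one line by observing that both sides corepresent the same functor: for any $S\otimes_R A$-module $X$, extension--restriction adjunction gives $\Hom_{S\otimes_R A}(S\otimes_R(M\otimes_B N),X)\cong \Hom_A(M\otimes_B N,\Res X)$, while tensor--hom adjunction applied twice identifies $\Hom_{S\otimes_R A}\bigl((S\otimes_R M)\otimes_{S\otimes_R B}(S\otimes_R N),X\bigr)$ with the same thing; Yoneda then produces the isomorphism, with naturality in $N$ (and in $X$) for free. You instead build the two maps $\phi$ and $\psi$ by hand and verify they are mutually inverse. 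Your construction goes through: the currying trick through $\Hom_R(N,-)$ correctly handles well-definedness of $\psi$ on the product of two tensor products, and checking $(S\otimes_R B)$-balancedness on the additive-multiplicative generators $s\otimes 1$ and $1\otimes b$ suffices because balancedness is additive and multiplicative in the ring element. What your approach buys is that it is entirely elementary and makes the isomorphism completely explicit on simple tensors; what it costs is precisely the bookkeeping you flag, plus the fact that naturality in $N$ (which the paper uses later, e.g.\ when commuting $\Ind_{G_e}$ and $\Coind_{G_e}$ with scalar extension) has to be observed separately, whereas the representability argument delivers it automatically.
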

\begin{proof}[Sketch of proof]
One checks both of these functors represent $\Hom_A(M\otimes_B N,\Res(-))$, where $\Res$ is restriction of scalars from $S\otimes_R A$ to $A$, and applies the Yoneda lemma.
\end{proof}

The following is surely folklore.

\begin{Thm}\label{t:Brauer+CMP}
Let $M$ be a finite monoid and $K$ a field.  Then $K$ is a splitting field for $M$ if and only if it is a splitting field for each maximal subgroup of $M$.  In particular, if $n$ the least common multiple of the periods of the elements of $M$ and $K$ is a field containing a primitive $n^{th}$-root of unity, then $K$ is a splitting field for $M$.
\end{Thm}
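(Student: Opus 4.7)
My plan is to use the Clifford--Munn--Ponizovskii parametrization of simple modules to reduce the splitting condition on $KM$ to the splitting conditions on the group algebras $KG_{e_i}$ of the maximal subgroups, and then invoke Brauer's classical theorem for the ``in particular'' clause. I would work with the standard characterization that $K$ is a splitting field for a finite dimensional $K$-algebra $A$ iff $\ov K\otimes_K S$ is simple over $\ov K\otimes_K A$ for every simple $A$-module $S$, where $\ov K$ denotes an algebraic closure of $K$.

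The key computation is to track how base change $\ov K\otimes_K -$ interacts with the Munn--Ponizovskii construction $V\mapsto S_i(V)=\Ind_{G_{e_i}}(V)/\rad(\Ind_{G_{e_i}}(V))$. Lemma~\ref{l:commute.with.ext} applied to $\Ind_{G_{e_i}}(V)=KL_{e_i}\otimes_{KG_{e_i}}V$ yields $\ov K\otimes_K\Ind_{G_{e_i}}(V)\cong\Ind_{G_{e_i}}^{\ov KM}(\ov K\otimes_K V)$, and Proposition~\ref{p:radical.extension}(3) lets me commute $\rad$ past $\ov K\otimes_K-$. Passing to the quotient gives
\[
\ov K\otimes_K S_i(V)\;\cong\;\Ind_{G_{e_i}}^{\ov KM}(\ov K\otimes_K V)\big/\rad.
\]
Proposition~\ref{p:radical.extension}(3) applied to the finite group $G_{e_i}$ also shows $\rad(\ov K\otimes_K V)=\ov K\otimes_K\rad V=0$, so $\ov K\otimes_K V$ is semisimple over $\ov KG_{e_i}$. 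Writing $\ov K\otimes_K V=\bigoplus_j W_j^{n_j}$ with pairwise non-isomorphic simples and applying Clifford--Munn--Ponizovskii over $\ov K$, the displayed module becomes $\bigoplus_j S_i^{\ov KM}(W_j)^{n_j}$. Hence $\ov K\otimes_K S_i(V)$ is simple if and only if $\ov K\otimes_K V$ is, i.e., iff $V$ is absolutely simple over $KG_{e_i}$.

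Both directions of the biconditional now follow at once: Clifford--Munn--Ponizovskii identifies simple $KM$-modules with pairs $(e_i,V)$ with $V$ a simple $KG_{e_i}$-module, so $K$ splits $KM$ iff $\ov K\otimes_K V$ is simple for every such pair, iff $K$ splits every $KG_{e_i}$. For the ``in particular'' clause, any $g$ in a maximal subgroup $G_e$ satisfies $g^\omega=e$ and $g^{\omega+1}=g$, so its monoid-theoretic period equals its order in the group; hence the LCM $n$ is divisible by $\exp(G_{e_i})$ for every $i$, a primitive $n$-th root of unity in $K$ yields primitive $\exp(G_{e_i})$-th roots of unity, and the classical Brauer splitting theorem delivers splitting of every $KG_{e_i}$. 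The first part then concludes.

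The subtle point worth double-checking is semisimplicity of $\ov K\otimes_K V$, which one might naively worry could fail for imperfect $K$; it is rescued by Proposition~\ref{p:radical.extension}, whose proof via the prime field sidesteps any perfectness hypothesis on $K$.
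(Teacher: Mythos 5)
Your proposal is correct and takes essentially the same route as the paper: the heart of both arguments is commuting base change past $\Ind_{G_e}$ (via Lemma~\ref{l:commute.with.ext}) and past $\rad$ (via Proposition~\ref{p:radical.extension}), then invoking the Clifford--Munn--Ponizovskii description of the simple modules and, for the final clause, Brauer's classical splitting theorem. The only difference is in the ``only if'' direction, where the paper uses the quicker observation that splitting fields pass to quotients and corners (so that $K$ splits $KG_e\cong eKMe/kI$), whereas you extract it from the same biconditional base-change computation; both are valid.
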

\begin{proof}
First note that if $K$ is a splitting field for a finite dimensional $K$-algebra $A$, then it is a splitting field for any quotient and any corner $eAe$ with $e=e^2$. In particular, if $K$ is a splitting field for $M$ and $e\in E(M)$, then $K$ is a splitting field for $KG_e\cong eKMe/kI$ where $I$ is the ideal $eMe\setminus G_e$ of $eMe$.  Suppose now that $K$ is a splitting field for each maximal subgroup of $M$, and let $e\in M$ be an idempotent.
Our assumption implies that $K$ is a splitting field for $KG_e$, and so each $KG_e$-module is absolutely simple.  By the Clifford-Munn-Ponizovskii theorem~\cite[Theorem~5.5]{repbook}, we must show that if $V$ a simple $KG_e$-module, then the $KM$-module $\Ind_{G_e}(V)/\rad (\Ind_{G_e})(V)$ is absolutely simple. Let $F/K$ be an extension field. Lemma~\ref{l:commute.with.ext} yields  $F\otimes_K\Ind_{G_e}(V)= F\otimes_K (KL_e\otimes_{KG_e} V)\cong FL_e\otimes_{FG_e} (F\otimes_K V)= \Ind_{G_e}(F\otimes_K V)$. Proposition~\ref{p:radical.extension} then tells us that $\rad(F\otimes_K\Ind_{G_e}(V)) = F\otimes_K \rad(\Ind_{G_e}(V))$.  We conclude that $F\otimes_K (\Ind_{G_e}(V)/\rad(\Ind_{G_e}(V)))\cong (F\otimes_K \Ind_{G_e}(V))/(F\otimes_K\rad (\Ind{G_e}(V)))\cong \Ind_{G_e}(F\otimes_K V)/\rad (\Ind_{G_e}(F\otimes_K V))$ and hence is simple by the Clifford-Munn-Ponizovskii theorem~\cite{repbook} (as $F\otimes_K V$ is simple).  This proves the first statement.  The second follows from the first by Brauer's theorem.
\end{proof}

Note that the least common multiple of the periods of the elements of $M$ is the same as the least common multiple of the exponents of its maximal subgroups.

Let $M$ be a finite monoid. For a prime $p>0$, a (complete) $p$-modular system $(F,\mathcal O,k)$ consists of a complete discrete valuation ring $\mathcal O$ of characteristic $0$ with maximal ideal $\mathfrak m$, field of fractions $F$ and residue field $k=\mathcal O/\mathfrak m$ of characteristic $p$.

We say that the $p$-modular system is \emph{splitting} if $F$ is a splitting field for $FM$ and $k$ is a splitting field for $kM$.  One can always find a splitting $p$-modular system for $M$.  For example, if $n$ is the least common multiple of the periods of the elements of $M$, we can adjoin to the $p$-adic rationals $\mathbb Q_p$ a primitive $n^{th}$-root of unity $\omega$ and we can take $\mathcal O$ to be the integral closure of the ring $\mathbb Z_p$ of $p$-adic integers in $F=\mathbb Q_p(\omega)$.  Then $\mathcal O$ is a complete discrete valuation ring with field of fractions $F$,  $\omega\in \mathcal O$ and the maximal ideal $\mathfrak m$ of $\mathcal O$ contains $p\mathbb Z_p$.  Thus $k=\mathcal O/\mathfrak m$ is a finite field of characteristic $p$ containing a primitive $n^{th}$-root of unity $\omega+\mathfrak m$.  It follows that $(F,\mathcal O,k)$ is a splitting $p$-modular system for $M$ by Theorem~\ref{t:Brauer+CMP}.

Fix from now on a splitting $p$-modular system $(F,\mathcal O,k)$ for $M$.  Our first goal will be to show that the decomposition map $d$ is surjective.  This will in turn imply that $e$ is injective, as it is the adjoint to $d$.  We will make use of Brauer's theorem that $d$ is surjective for group rings.

If $R$ is any commutative ring with unit, $e\in E(M)$ and $V$ is an $RM$-module, then $\Res_{G_e}(V)=  eV$ is an $RG_e$-module.  Moreover, $\Res_{G_e}$ is an exact functor from the category of finitely generated $RM$-modules to the category of finitely generated $RG_e$-modules.  There results a ring homomorphism $\Res_{G_e}\colon G_0(RM)\to G_0(RG_e)$.

\begin{Prop}\label{p:decomp.commutes}
If $e\in E(M)$ and $V$ is a finitely generated $FM$-module, then $d([\Res_{G_e}(V)]) = \Res_{G_e}(d([V]))$.
\end{Prop}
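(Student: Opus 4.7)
The plan is to verify the identity by choosing a single lattice and computing both sides in terms of it. Fix a full $\mathcal O M$-lattice $L$ in $V$ (which exists by the construction $L=\mathcal O M \cdot L_0$ for any full $\mathcal O$-lattice $L_0$, as in the discussion preceding Proposition~\ref{p:BN}). Then by definition $d([V])=[L/\mathfrak m L]$ in $G_0(kM)$, and since $\Res_{G_e}$ is induced by the exact functor $W\mapsto eW$, we have $\Res_{G_e}(d([V]))=[e(L/\mathfrak m L)]$ in $G_0(kG_e)$. To compute the left-hand side, I would show that $eL$ is a full $\mathcal O G_e$-lattice in $\Res_{G_e}(V)=eV$, which would give $d([eV])=[eL/\mathfrak m(eL)]$ in $G_0(kG_e)$.

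First I would verify that $eL$ is an $\mathcal O G_e$-submodule of $eV$: it is clearly stable under $\mathcal O$, and for $g\in G_e$ one has $g=eg$ (since $e$ is the identity of $eMe$), so $g(eL)=(eg)L=e(gL)\subseteq eL$. Next, $eL$ is a finitely generated $\mathcal O$-module (submodule of a finitely generated module over the Noetherian ring $\mathcal O$) which is free (submodule of a free module over a PID). Finally, $F\otimes_{\mathcal O} eL$ maps into $V$ with image $F\cdot eL=e(FL)=eV$, and since $L$ is $\mathcal O$-torsion-free this map is injective, so $eL$ spans $eV$ over $F$ with the correct rank. Hence $eL$ is a full $\mathcal O G_e$-lattice in $eV$ and $d([eV])=[eL/\mathfrak m(eL)]$.

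It remains to compare $eL/\mathfrak m(eL)$ with $e(L/\mathfrak m L)$. The inclusion $eL\hookrightarrow L$ composed with the quotient $L\to L/\mathfrak m L$ takes values in $e(L/\mathfrak m L)$ and is surjective onto it, since any element of $e(L/\mathfrak m L)$ has the form $e(x+\mathfrak m L)=ex+\mathfrak m L$ with $ex\in eL$. Its kernel is $eL\cap \mathfrak m L$, and this equals $\mathfrak m(eL)$: the inclusion $\mathfrak m(eL)\subseteq eL\cap\mathfrak m L$ is obvious, while for $x\in eL\cap\mathfrak m L$ we may write $x=ex=e(\pi y)=\pi(ey)\in\mathfrak m(eL)$ for any $\pi$ generating $\mathfrak m$. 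Therefore $eL/\mathfrak m(eL)\cong e(L/\mathfrak m L)$ as $kG_e$-modules, so both sides of the claimed identity equal $[e(L/\mathfrak m L)]$ in $G_0(kG_e)$, finishing the proof.

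The argument is essentially bookkeeping; the only spot that requires a moment's thought is the identification $eL\cap\mathfrak m L=\mathfrak m(eL)$, but this is immediate once one remembers that $\mathfrak m$ is principal and commutes with the idempotent action, so I do not anticipate a genuine obstacle.
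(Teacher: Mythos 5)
Your proof is correct and follows essentially the same route as the paper: fix a full $\mathcal OM$-lattice $L$ in $V$, observe that $eL$ is a full $\mathcal OG_e$-lattice in $eV$, and identify $eL/\mathfrak m(eL)$ with $e(L/\mathfrak mL)$ (the paper gets this last identification in one line from exactness of the restriction functor, where you verify it by hand). No issues.
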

\begin{proof}
Let $L$ be a full $\mathcal OM$-lattice in $V$.  Then $eL$ is a full $\mathcal OG_e$-lattice in $eV$.  Also $e(L/\mathfrak mL) \cong eL/e\mathfrak mL=eL/\mathfrak meL$ since restriction is exact.  It follows that $d([\Res_{G_e}(V)]) = d[eV] = [eL/\mathfrak meL] = [e(L/\mathfrak mL)] = \Res_{G_e}(d([V]))$.
\end{proof}

Fix idempotent representatives $e_1,\ldots, e_t$ of the regular $\mathscr J$-classes of $M$.  Then it is shown in~\cite[Theorem~6.5]{repbook} that, for any field $K$, one has that $\Res\colon G_0(KM)\to \prod_{i=1}^tG_0(KG_{e_i})$ given by $\Res([V])_i = \Res_{G_{e_i}}([V])$ is a ring isomorphism.

\begin{Thm}\label{t:decomp.surjective}
Let $(F,\mathcal O,k)$ be a splitting $p$-modular system for $M$.  Then the decomposition map $d\colon G_0(FM)\to G_0(kM)$ is surjective and hence its adjoint $e\colon K_0(kM)\to K_0(FM)$ is injective.
\end{Thm}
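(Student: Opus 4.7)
The plan is to reduce surjectivity to Brauer's classical theorem that the decomposition map for a finite group algebra over a splitting $p$-modular system is surjective. Fix idempotent representatives $e_1,\ldots,e_t$ of the regular $\mathscr J$-classes of $M$ and assemble the square
\[
\begin{tikzcd}
G_0(FM) \ar{r}{d} \ar{d}{\Res} & G_0(kM) \ar{d}{\Res} \\
\prod_{i=1}^{t} G_0(FG_{e_i}) \ar{r}{\prod_i d_i} & \prod_{i=1}^{t} G_0(kG_{e_i})
\end{tikzcd}
\]
in which $\Res = (\Res_{G_{e_1}},\ldots,\Res_{G_{e_t}})$, each $d_i$ is the classical decomposition map of the group algebra of $G_{e_i}$ attached to the modular system $(F,\mathcal O,k)$, and both vertical arrows are isomorphisms by \cite[Theorem~6.5]{repbook} applied separately to $F$ and to $k$.

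Proposition~\ref{p:decomp.commutes} makes the square commute coordinatewise: for any $[V]\in G_0(FM)$ and any $i$, one has $\Res_{G_{e_i}}(d([V])) = d([\Res_{G_{e_i}}(V)])$, which is the $i$th coordinate of $(\prod_i d_i)(\Res([V]))$. Each $d_i$ is surjective by the classical Brauer theorem for group algebras, so the product $\prod_i d_i$ is surjective. Since the two vertical restriction maps are isomorphisms, a direct diagram chase forces $d$ to be surjective.

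For the final statement about $e$, appeal to the last clause of Theorem~\ref{t:brauer.square}: $e\colon K_0(kM)\to K_0(FM)$ is the adjoint of $d$ with respect to the perfect pairings $\langle\cdot,\cdot\rangle$ on $K_0\times G_0$ on each side. Thus if $e(x)=0$, then $\langle x, d(y)\rangle = \langle e(x), y\rangle = 0$ for every $y\in G_0(FM)$; surjectivity of $d$ makes $d(y)$ range over all of $G_0(kM)$, and nondegeneracy of the $k$-side pairing forces $x=0$.

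The main ``obstacle'' here is essentially procedural: all the substantive work has been done upstream, namely the group-theoretic Brauer surjectivity theorem, the compatibility of $d$ with restriction to maximal subgroups (Proposition~\ref{p:decomp.commutes}), and the monoid Grothendieck group decomposition of \cite[Theorem~6.5]{repbook}. Once these three ingredients are in hand, the argument is a diagram chase followed by a one-line adjointness argument.
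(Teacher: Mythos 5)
Your proof is correct and follows essentially the same route as the paper: the commutative square built from Proposition~\ref{p:decomp.commutes} and the restriction isomorphisms of \cite[Theorem~6.5]{repbook}, with surjectivity imported from Brauer's theorem for the group algebras $G_{e_i}$. The only difference is that you spell out the adjointness argument for injectivity of $e$, which the paper leaves implicit.
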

\begin{proof}
By Proposition~\ref{p:decomp.commutes} have a commutative diagram
\[\begin{tikzcd}
    G_0(FM)\ar{r}{\Res}\ar{d}[swap]{d} & \prod_{i=1}^t G_0(FG_{e_i})\ar{d}{(d_1,\ldots, d_t)}\\ G_0(kM)\ar{r}{\Res} & \prod_{i=1}^t G_0(kG_{e_i})
  \end{tikzcd}\]
  where $d_i\colon G_0(FG_{e_i})\to G_0(kG_{e_i})$ is the decomposition map.  Since the horizontal maps are isomorphisms and the $d_i$ are surjective by Brauer's theorem~\cite[Corollary~18.14]{CurtisReinerI}, we conclude that $d$ is surjective.
\end{proof}

Let $P_1,\ldots, P_n$ be representatives of the isomorphism classes of projective indecomposable $FM$-modules, with corresponding simples $S_i=P_i/\rad(P_i)$.  Let $Q_1,\ldots, Q_r$ be representatives of the isomorphism classes of projective indecomposable $kM$-modules with corresponding simples $T_i=Q_i/\rad(Q_i)$.  It follows from Theorem~\ref{t:decomp.surjective} that $r\leq n$.  This also can be deduced from the fact that $r$ is the number of $p$-regular generalized conjugacy classes of $M$ and $n$ is the number of generalized conjugacy classes.  Denote by $C(FM)$ and $C(kM)$ the Cartan matrices for $FM$ and $kM$, respectively.  Let $D$ be the $n\times r$ matrix defined by $d([S_i]) = \sum_{j=1}^r d_{ij}[T_j]$.  Notice that $D^T$ is the matrix of $d$ and hence $D$ is the matrix of $e$.  We can then reformulate Theorem~\ref{t:brauer.square} in the following way, generalizing Brauer's result from the group case.

\begin{Thm}\label{t:CartanSquare}
Let $(F,\mathcal O,k)$ be a splitting $p$-modular system for $M$.  Then $C(kM) = D^TC(FM)D$ with $D$ a full rank matrix. In particular, if $FM$ is semisimple, then $C(kM)=D^TD$ is a positive definite symmetric matrix.
\end{Thm}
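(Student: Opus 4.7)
The factorization will come directly from reading off matrices in the commutative square of Theorem~\ref{t:brauer.square}. The diagram says $c = d\circ C\circ e$, where $C$ is the Cartan homomorphism of $FM$ and $c$ is the Cartan homomorphism of $kM$. By the setup immediately preceding the theorem, the matrix of $d\colon G_0(FM)\to G_0(kM)$ in the bases $\{[S_i]\}$ and $\{[T_j]\}$ is $D^T$, and by the adjointness of $e$ with respect to the dual bases $\{[P_i]\},\{[S_j]\}$ and $\{[Q_i]\},\{[T_j]\}$ supplied by Theorem~\ref{t:brauer.square}, one has $\langle e([Q_j]),[S_i]\rangle =\langle [Q_j],d([S_i])\rangle=d_{ij}$, so the matrix of $e\colon K_0(kM)\to K_0(FM)$ is $D$. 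Composing, the matrix of $d\circ C\circ e$ is $D^T C(FM) D$, which equals the matrix $C(kM)$ of $c$.

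For the full rank claim I would use Theorem~\ref{t:decomp.surjective}: since the $p$-modular system is splitting, $d$ is surjective and $e$ is injective. Surjectivity of $d$ means $D^T$ has full row rank $r$, equivalently $D$ (an $n\times r$ integer matrix with $r\le n$) has full column rank $r$.

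For the final statement, if $FM$ is semisimple then $C(FM)=I_n$, so $C(kM)=D^T D$. This matrix is manifestly symmetric. Since $D$ has full column rank, the real quadratic form $x\mapsto x^T D^T D x=\|Dx\|_2^2$ vanishes only at $x=0$, so $D^T D$ is positive definite.

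There is no real obstacle here: once the matrix conventions in the preceding paragraph are unpacked, everything reduces to the commutativity of Theorem~\ref{t:brauer.square} and the surjectivity of $d$ from Theorem~\ref{t:decomp.surjective}. The only subtlety worth flagging is the bookkeeping that identifies the matrix of $e$ with $D$ (and not $D^T$), which is exactly what adjointness of $e$ and $d$ with respect to the perfect pairings delivers.
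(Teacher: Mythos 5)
Your proposal is correct and is exactly the argument the paper intends: the theorem is presented as a matrix reformulation of Theorem~\ref{t:brauer.square} (using the convention that $D^T$ is the matrix of $d$ and, by adjointness, $D$ is the matrix of $e$), with the full-rank claim coming from the surjectivity of $d$ in Theorem~\ref{t:decomp.surjective} and the positive definiteness being the standard observation that $x^TD^TDx=\|Dx\|^2>0$ for $x\neq 0$ when $D$ has full column rank.
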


 We remark that even if $FM$ is semisimple,  $kM$ need not be a symmetric algebra.  For instance, if $M(n,q)$ is the monoid of $n\times n$-matrices over the field of $q$ elements, then $FM(n,q)$ is semisimple for $F$ of characteristic zero~\cite{putchasemisimple,Kovacs,repbook}, but $kM(n,q)$ is symmetric if and only if $q$ is not a power of the characteristic $p>0$ of $k$~\cite{Kovacs,repbook}.

Theorem~\ref{t:CartanSquare} provides a relationship between the Cartan matrix of the algebra of a finite monoid over a splitting field in characteristic $0$ and in characteristic $p>0$.  I originally hoped that nonsingularity of $C(\mathbb CM)$ would imply nonsingularity of $C(KM)$ for all fields $K$, but a counterexample was provided by Eisele~\cite{eisele2023counterexample}.  So instead I propose the following general question.

\begin{Question}\label{conjecture}
What conditions on a finite monoid guarantee that if the Cartan matrix $C(\mathbb CM)$ of the complex algebra $\mathbb CM$ is nonsingular, then the Cartan matrix  $C(KM)$ is nonsingular for any field $K$?
\end{Question}

Later, we shall provide some positive results and conjecture  a condition that encompasses all the current results.

Let us begin with a couple of observations.  First of all the Cartan matrix of a monoid algebra can be singular over every field.  In fact, it can be essentially as complicated as any Cartan matrix can be, as the following construction shows.

\begin{Prop}
Let $A$ be an $n\times n$ nonnegative integer matrix with positive diagonal entries.  Then there is a finite monoid $M$ with $n+2$ simple modules such that \[C(KM) =\begin{bmatrix}A & 0 & 0\\ 0 & 1& 0\\ 0&0&1\end{bmatrix}\]  for all fields $K$.  In particular, $C(KM)$ can be singular over all fields.
\end{Prop}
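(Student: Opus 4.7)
The plan is to realize $A$ as the Cartan matrix of a split basic algebra $KQ/J^2$ coming from a quiver, and then to realize that algebra together with two extra one-dimensional blocks as a monoid algebra. Let $Q$ be the quiver with vertex set $\{1,\dots,n\}$ and $b_{ij}:=a_{ij}-\delta_{ij}$ arrows from vertex $j$ to vertex $i$; this count is nonnegative because $a_{ii}\geq 1$. Define a semigroup with zero $S=Q_0\cup Q_1\cup\{z\}$ by letting $z$ absorb, taking $\varepsilon_v\varepsilon_w=\varepsilon_v$ if $v=w$ and $z$ otherwise, setting $\varepsilon_{t(\alpha)}\alpha=\alpha=\alpha\varepsilon_{s(\alpha)}$ while letting all other products of a vertex and an arrow equal $z$, and setting every product of two arrows equal to $z$. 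Finally, let $M=S^1$ be $S$ with a fresh identity adjoined.

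I would next enumerate the regular $\mathscr J$-classes of $M$. Each arrow $\alpha$ generates the ideal $\{\alpha,z\}$, so $\{\alpha\}$ is a non-regular singleton class. For each of the idempotents $1$, $\varepsilon_v$ and $z$ the principal ideal contains no other idempotent of $M$, so $\{1\}$, each $\{\varepsilon_v\}$ and $\{z\}$ form $n+2$ singleton regular $\mathscr J$-classes, each with trivial maximal subgroup. By the Clifford-Munn-Ponizovskii theorem the algebra $KM$ has exactly $n+2$ simples $T_1$, $T_0$ and $T_v$ ($v\in Q_0$), each one-dimensional, over every field $K$.

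The main step is to decompose $1\in KM$ into primitive idempotents and read off the Cartan matrix. Because $z\in M$ is absorbing, $z$ is a central idempotent of $KM$ and $(KM)z=Kz$ is one-dimensional, yielding $P_0$. The semigroup idempotents $\varepsilon_v\in KM$ themselves fail to be orthogonal, since $\varepsilon_v\varepsilon_w=z\neq 0$ for $v\neq w$; but a short direct calculation shows that $\{\varepsilon_v-z\}_{v\in Q_0}$ together with $f:=1-\sum_v\varepsilon_v+(n-1)z$ are pairwise orthogonal idempotents, each orthogonal to $z$, and summing to $1-z$. Further computations show $(KM)f=Kf$ (giving $P_1$) and that $P_v:=(KM)(\varepsilon_v-z)$ has basis $\{\varepsilon_v-z\}\cup\{\alpha-z:s(\alpha)=v\}$ with simple top $T_v$ and semisimple radical $\bigoplus_{\alpha\colon v\to i}T_i$. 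This yields both primitivity of the idempotents and the composition multiplicities, so the multiplicity of $T_i$ in $P_v$ is $\delta_{iv}+b_{iv}=a_{iv}$, giving $C(KM)=\mathrm{diag}(A,1,1)$ over every $K$. The only real obstacle is the bookkeeping with the semigroup zero $z$, which sits in $KM$ as an element distinct from $0$; once one spots that subtracting $z$ from the $\varepsilon_v$ exactly compensates for the spurious $z$ produced in products of distinct $\varepsilon$'s, the entire calculation is routine.
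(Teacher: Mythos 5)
Your proof is correct and takes essentially the same route as the paper: the monoid is the same one (the paper realizes it as the submonoid $\{1,0,\varepsilon_i,\text{arrows}\}$ of $KQ/J^2$ for the quiver with adjacency matrix $A-I$), and your idempotents $z$, $f=1-\varepsilon-z$ and $\varepsilon_v-z$ are exactly the paper's central idempotents $z$, $1-\varepsilon-z$ and $\varepsilon=\sum_v(\varepsilon_v-z)$, the latter refined into primitives. The only difference is presentational: the paper concludes via the isomorphism $KM\cong KQ/J^2\times K\times K$ and the known Cartan matrix of $KQ/J^2$, while you compute the composition factors of the $P_v$ by hand.
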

\begin{proof}
View $A-I$ (which is a nonnegative integer matrix) as the adjacency matrix of a finite quiver $Q$ with $n$ vertices.  More precisely, $Q$ has vertices $1,\ldots, n$ and there are $(A-I)_{ij}$ arrows from vertex $j$ to vertex $i$ in $A$.  Let $J$ be the arrow ideal of $KQ$.  Then $KQ/J^2$ splits over $K$ and has Cartan matrix $A$.  Let $M$ be the submonoid of $KQ/J^2$ consisting of $1,0$, the $\varepsilon_i$ and the edges of $Q$ (where we omit cosets from our notation).  The monoid $M$ depends only on $Q$ and not on $K$.    To avoid confusing the zero of $M$ with the zero of $KM$, we write $z$ for the zero of the monoid.  Let $\varepsilon=\sum (\varepsilon_i-z)\in KM$.  Then there is a surjective homomorphism $\pi\colon KM\to KQ/J^2$ induced by the inclusion of $M$, and $\ker \pi$ is spanned over $K$ by $1-\varepsilon$ and $z$. Moreover,  $1-\varepsilon-z,\varepsilon, z$ are orthogonal central idempotents summing to $1$ with $KM(1-\varepsilon-z) = K(1-\varepsilon-z)$, $KM\varepsilon\cong KQ/J^2$ and $KMz= Kz$.  It follows that $KM\cong KQ/J^2\times K\times K$, and hence has Cartan matrix as described in the statement of the proposition.

In particular, if we take \[A=\begin{bmatrix} 1 & 1\\ 1&1\end{bmatrix},\] then $C(KM)$ is singular for every field $K$.
\end{proof}

Next we observe that for the purposes of the Question~\ref{conjecture}, it is enough to consider the case of a splitting field.

\begin{Prop}\label{p:split.field.enough}
Let $A$ be a finite dimensional algebra over a field $K$ and suppose that $L$ is a splitting field for $A$.  If $C(L\otimes_K A)$ is nonsingular, then $C(A)$ is nonsingular.
\end{Prop}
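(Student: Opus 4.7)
The strategy is to track how the primitive idempotents of $A$ refine after scalar extension to $B = L\otimes_K A$. Choose representatives $e_1,\ldots,e_n$ of the conjugacy classes of primitive idempotents of $A$, with associated projective indecomposables $P_i = Ae_i$, simple tops $S_i$, and endomorphism division algebras $D_i = \End_A(S_i)$; set $d_i = \dim_K D_i$. Choose analogous $f_1,\ldots,f_s$ for $B$ yielding $Q_a = Bf_a$ and $T_a = Q_a/\rad Q_a$, noting that $\End_B(T_a)=L$ by the splitting hypothesis. Decomposing each $1\otimes e_i$ into primitive orthogonal idempotents in $B$, let $E_{ia}$ be the multiplicity of $Q_a$ in $L\otimes_K P_i$. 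Since $L$ is a splitting field for $A$, the algebra $L\otimes_K(A/\rad A)$ is semisimple, so $\rad B = L\otimes_K\rad A$; passing to tops then identifies $E_{ia}$ with the multiplicity of $T_a$ in $L\otimes_K S_i$.

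With this notation in place, I would establish two matrix identities. The first is the commutativity relation
\[
C(B)\, E^T \;=\; E^T\, C(A),
\]
which expresses the commutativity of scalar extension with the Cartan homomorphisms on Grothendieck groups: both $K_0(A)\to K_0(B)$ and $G_0(A)\to G_0(B)$ have matrix $E^T$ by the preceding paragraph. The second is the orthogonality relation
\[
E E^T \;=\; \mathrm{diag}(d_1,\ldots,d_n),
\]
which I would obtain by computing $\dim_L \Hom_B(L\otimes_K S_i,\,L\otimes_K S_j)$ in two ways. From the decomposition $L\otimes_K S_i\cong\bigoplus_a T_a^{\oplus E_{ia}}$ and absolute simplicity of the $T_a$ it equals $\sum_a E_{ia}E_{ja}$, while flat base change plus Schur's lemma give $\dim_K\Hom_A(S_i,S_j)=d_i\delta_{ij}$.

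Given these, the conclusion is pure linear algebra. If $C(A)v=0$ for some column vector $v\in K^n$, then $C(B)E^Tv = E^T C(A)v = 0$; nonsingularity of $C(B)$ forces $E^Tv=0$, after which $\mathrm{diag}(d_i)\,v = EE^T v = 0$, so $v=0$ as the $d_i$ are positive. Hence $C(A)$ is injective, and therefore nonsingular.

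The main obstacle is the bookkeeping in setting up the matrix $E$ and verifying the two identities cleanly; a minor technical point is confirming that $\rad$ commutes with scalar extension to a splitting field, which is what allows the double interpretation of $E_{ia}$ as both a projective multiplicity and a simple multiplicity. Once these pieces are assembled, the implication is immediate.
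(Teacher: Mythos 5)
Your overall architecture is the same as the paper's: a commutative square relating the Cartan maps of $A$ and $A_L=L\otimes_K A$ via extension of scalars, plus injectivity of the induced map $K_0(A)\to K_0(A_L)$, plus injectivity of $C(A_L)$. The gap is in how you justify that injectivity. Your two matrix identities both rest on the claim that $L$ being a splitting field for $A$ forces $L\otimes_K(A/\rad A)$ to be semisimple, hence $\rad(L\otimes_K A)=L\otimes_K\rad A$. That implication is false in general: take $K=\mathbb F_p(t)$ and $A=L=K(t^{1/p})$. Then $A$ is a field, so $\rad A=0$, while $B=L\otimes_K A\cong L[x]/\bigl((x-t^{1/p})^p\bigr)$ is local with nonzero nilpotent radical and $B/\rad B\cong L$, so $L$ \emph{is} a splitting field for $A$. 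In this example $n=s=1$, $E=(1)$, $d_1=p$, so $EE^T\neq\mathrm{diag}(d_1)$; the composition multiplicity of $T_1$ in $L\otimes_K S_1=B$ is $p$, not $E_{11}=1$; and even your commutativity relation fails as written, since $C(B)E^T=(p)$ while $E^TC(A)=(1)$ (the matrix of $j\colon G_0(A)\to G_0(B)$ is $(p)$, not $E^T$). The double interpretation of $E_{ia}$, and with it the orthogonality relation, is only valid when $A/\rad A$ is separable over $K$ (e.g.\ $K$ perfect, or $A$ a monoid algebra via Proposition~\ref{p:radical.extension}), whereas the proposition is stated for arbitrary finite dimensional $A$.

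The fix is to decouple the argument from the radical. The commutative square $c'\circ i=j\circ c$ holds unconditionally because $L\otimes_K(-)$ is exact and sends projectives to projectives, regardless of whether the matrix of $j$ equals $E^T$; and the injectivity of $i\colon K_0(A)\to K_0(A_L)$ (equivalently of $E^T$) follows from the Noether--Deuring theorem together with Krull--Schmidt, which is exactly the route the paper takes. With those two inputs your closing linear algebra goes through verbatim: $C(A)v=0$ gives $C(A_L)E^Tv=0$, hence $E^Tv=0$, hence $v=0$.
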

\begin{proof}
Put $A_L=L\otimes_K A$.
Let $c\colon K_0(A)\to G_0(A)$ and $c'\colon K_0(A_L)\to G_0(A_L)$ be the Cartan maps.  We have homomorphisms $i\colon K_0(A)\to K_0(A_L)$ and $j\colon G_0(A)\to G_0(A_L)$ given by $i([V]) = [L\otimes_K V]$ and $j([V]) = [L\otimes_K V]$.  Then it is immediate that
\[\begin{tikzcd}
    K_0(A)\ar{r}{c}\ar{d}[swap]{i} & G_0(A)\ar{d}{j}\\ K_0(A_L)\ar{r}{c'} & G_0(A_L)
  \end{tikzcd}\]
commutes.

The map $i$ is injective by the Noether-Deuring theorem, while the map $c'$ is injective by hypothesis.  It follows that $jc$, and hence $c$, is injective.  Thus $C(A)$ is nonsingular.
\end{proof}

If $K,L$ are two splitting fields for $M$ of the same characteristic, then $C(KM)=C(LM)$ (up to conjugation by a permutation matrix), and so we can now rephrase our question as asking if $(F,\mathcal O,k)$ is a splitting $p$-modular system for $M$ and $C(FM)$ is nonsingular, under what conditions on $M$ must  $C(kM)$ also be nonsingular.  Inspection of Theorem~\ref{t:CartanSquare} shows that it suffices to prove that $D^T$ is injective on the image of $C(FM)D$.  That is, the question is equivalent to asking when the decomposition map $d$ must be injective on the subgroup of $G_0(FM)$ generated by the classes $[F\otimes_{\mathcal O} P]$ with $P$ a finitely generated projective $\mathcal OM$-module.  
 To understand what this means in a more concrete way, we shall reinterpret things in terms of characters in the next section.

We now show that if $C(\mathbb CM)$ is nonsingular, then $C(kM)$ is nonsingular for fields $k$ whose characteristic does not divide the order of any maximal subgroup of $M$. Note that by Proposition~\ref{p:split.field.enough}, it is enough to consider a splitting field.

\begin{Cor}\label{c:good.char}
Let $M$ be a finite monoid and $p>0$ a prime not dividing the order of any maximal subgroup of $M$.  Let $(F,\mathcal O,k)$ be a splitting $p$-modular system for $M$.  Then the decomposition matrix $D$ is invertible over $\mathbb Z$ and hence $\det C(FM)=\det C(kM)$. Consequently, if $C(\mathbb CM)$ is nonsingular, then $C(KM)$ is nonsingular for any field of characteristic not dividing the order of any maximal subgroup of $M$.
\end{Cor}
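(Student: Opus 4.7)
The plan is to show that under the hypothesis $p\nmid |G_e|$ for every maximal subgroup $G_e$ of $M$, the decomposition matrix $D$ is in fact a permutation matrix; the determinant equality and the consequence then follow immediately from Theorem~\ref{t:CartanSquare} and Proposition~\ref{p:split.field.enough}. First I would reduce to the group case by invoking the commutative square used in the proof of Theorem~\ref{t:decomp.surjective},
\[\begin{tikzcd}
    G_0(FM)\ar{r}{\Res}\ar{d}[swap]{d} & \prod_{i=1}^t G_0(FG_{e_i})\ar{d}{(d_1,\ldots, d_t)}\\ G_0(kM)\ar{r}{\Res} & \prod_{i=1}^t G_0(kG_{e_i}),
  \end{tikzcd}\]
whose horizontal maps are isomorphisms. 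So it suffices to check that each $d_i$ is represented by a permutation matrix.

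For the group step, note that $p\nmid |G_{e_i}|$ forces both $FG_{e_i}$ and $kG_{e_i}$ to be semisimple by Maschke's theorem, so $C(FG_{e_i})=I=C(kG_{e_i})$. Applying Theorem~\ref{t:CartanSquare} to each $G_{e_i}$ gives $I=D_i^T\cdot I\cdot D_i$, i.e., $D_i$ is an orthogonal integer matrix; since its entries are nonnegative integers (they are composition multiplicities), $D_i$ must be a permutation matrix. Equivalently, one may observe that all elements of $G_{e_i}$ are $p$-regular, so by Brauer's counting the number of simple $FG_{e_i}$-modules equals the number of simple $kG_{e_i}$-modules, and the surjective map $d_i$ is then forced to be a bijection between the two bases of simples. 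Consequently, the same counting argument shows every generalized conjugacy class of $M$ is $p$-regular, so $D$ is square, and the commutative square above makes $d$ an isomorphism with $D$ a permutation matrix; in particular $\det D=\pm 1$. Taking determinants in $C(kM)=D^TC(FM)D$ yields $\det C(kM)=(\det D)^2\det C(FM)=\det C(FM)$.

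For the final consequence, $\mathbb C$ is a splitting field for $M$ by Theorem~\ref{t:Brauer+CMP} (it contains all roots of unity), and any two splitting fields of the same characteristic yield the same Cartan matrix up to simultaneous row/column permutation, so $\det C(\mathbb CM)=\pm\det C(FM)$ for our chosen splitting $p$-modular system. Thus nonsingularity of $C(\mathbb CM)$ gives nonsingularity of $C(FM)$, hence of $C(kM)$ by the determinant equality just established. For an arbitrary field $K$ of characteristic $p$ not dividing any maximal subgroup order, choose a splitting field $L$ for $KM$ of the same characteristic containing $K$; applying the splitting-field case to $(F,\mathcal O,L)$ shows $C(LM)$ is nonsingular, and Proposition~\ref{p:split.field.enough} then transfers this back to $C(KM)$. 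There is no serious obstacle in the argument; the only thing one must be careful about is confirming that $D$ is square before taking determinants, which is precisely the content of the ``number of $p$-regular generalized conjugacy classes'' count under the hypothesis.
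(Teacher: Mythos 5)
Your argument contains a genuine error: the claim that $D$ is a permutation matrix under the hypothesis $p\nmid |G_e|$ is false, and the paper itself exhibits a counterexample immediately after this corollary. In Example~\ref{ex:singular.2.2} all maximal subgroups are trivial (so the hypothesis holds for $p=2$), yet the decomposition matrix is unipotent upper triangular with a nonzero off-diagonal entry, not a permutation matrix. The step that fails is the transfer across the commutative square: the horizontal maps $\Res$ are isomorphisms of Grothendieck groups, but they do \emph{not} carry the basis of simple $FM$-modules to the basis of simples of $\prod_i G_0(FG_{e_i})$ (by Clifford--Munn--Ponizovskii, the matrix of $\Res$ in these bases is only block unipotent triangular, since $e_iV$ can be a nonzero, nonsimple $FG_{e_i}$-module when $e_i$ is not the apex of $V$). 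So even though each $d_i$ is indeed a permutation matrix in the bases of simples for the groups, conjugating by $\Res$ destroys this property, and $d$ need not be a permutation in the bases of simples for $M$.

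What does survive the conjugation is the property of being an isomorphism: each $d_i$ is a surjection between free abelian groups of equal rank (all conjugacy classes of $G_{e_i}$ are $p$-regular), hence an isomorphism, and therefore $d$ is an isomorphism; equivalently, every generalized conjugacy class of $M$ is $p$-regular, so $D$ is square and $d$ is a surjection between free abelian groups of the same rank. This already gives $\det D=\pm 1$, which is all that is needed for $\det C(kM)=(\det D)^2\det C(FM)=\det C(FM)$ --- and this is exactly the paper's argument. So your proof can be repaired simply by deleting the permutation-matrix claim and keeping the unimodularity conclusion; as written, however, the central assertion of your plan is false. (Your final paragraph reducing the general field $K$ to a splitting field via Proposition~\ref{p:split.field.enough} is essentially fine, modulo phrasing the choice of splitting $p$-modular system more carefully.)
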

\begin{proof}
Under these hypotheses $FM$ and $kM$ have the same number of simple modules since every generalized conjugacy class is $p$-regular.  Thus $d\colon G_0(FM)\to G_0(kM)$ is a surjective homomorphism of finitely generated free abelian groups (by Theorem~\ref{t:decomp.surjective}) of the same rank and hence an isomorphism.  As $D^T$ is the matrix of $d$ with respect to the natural bases, we conclude $\det(D)=\pm 1$.  From $C(kM)=D^TC(FM)D$, we conclude $\det C(kM)=(\det D)^2\det C(FM)=\det C(FM)$, as required.
\end{proof}

In the next section, we shall show that $D$ is, in fact, unipotent upper triangular with respect to an appropriate ordering of the simple modules under the hypotheses of Corollary~\ref{c:good.char}.  But it is not the case, in general,  that $D$ is a permutation matrix,  as the following illustrative example (which also appears in~\cite{PutchaBrauer}) with $FM$ semisimple shows.

\begin{Example}\label{ex:singular.2.2}
 Let $M$ be the submonoid of $M(2,2)$ consisting of all singular $2\times 2$-matrices over $\mathbb F_2$,  together with the identity matrix.  All the maximal subgroups of $M$ are trivial, and so every field is a splitting field.   Moreover, $KM$ is semisimple over all fields $K$ except those of characteristic $2$ (this can be verified directly using the sandwich matrix criterion for semisimplicty~\cite[Theorem~5.21]{repbook} since the only nontrivial sandwich matrix is that of the $\mathscr J$-class of rank one matrices, and this sandwich matrix is
 \begin{equation}\label{eq:sandwich.examp}
 \begin{bmatrix} 0 & 1& 1\\ 1&0&1\\ 1&1&0\end{bmatrix},
 \end{equation}
  which is invertible over all fields except those of characteristic $2$).

 Let us take $(\mathbb Q_2,\mathbb Z_2,\mathbb F_2)$ as our splitting $2$-modular system for $M$.   Let $P=\mathbb P(\mathbb F_2^2)$ be the projective space of lines in $\mathbb F_2^2$.  Then $\mathbb Z_2P$ is a $\mathbb Z_2M$-module where a matrix $A$ acts on a line $\ell$ by
\begin{equation}\label{eq:line.act}
A\odot \ell = \begin{cases}A\ell, & \text{if}\ A\ell \neq \{0\}\\ 0, & \text{else.}\end{cases}
\end{equation}
  Extending the scalars to $\mathbb Q_2$ gives a simple module $S_1=\mathbb Q_2P$.  The other simple $\mathbb Q_2M$-modules are the trivial module $S_2$ and the one-dimensional module $S_3$ corresponding to the determinant representation (sending singular matrices to $0$ and nonsingular matrices to $1$).  The $\mathbb Z_2$-forms of these modules are $\mathbb Z_2$ with the same actions, and so $d([S_2])=[S_2']$ and $d([S_3])=[S_3']$ where $S_2'$ is the trivial $\mathbb F_2M$-module and $S_3'$ is the one-dimensional $\mathbb F_2M$-module given by the determinant representation.  On the other hand, $\mathbb Z_2P/2\mathbb Z_2P\cong \mathbb F_2P$.  But $\mathbb F_2P$ is not simple.  There is an exact sequence $0\to S_3'\to \mathbb F_2P\to S_1'\to 0$ where $S_1'$ is a two-dimensional simple module.  The copy of $S_3'$ is the sum $\ell_1+\ell_2+\ell_3$ of the three lines.  This subspace is invariant since each rank $1$ matrix $A$ annihilates one line and sends the other two lines to the same line $\ell=A\mathbb F_2^2$, and hence $A\odot (\ell_1+\ell_2+\ell_3) = 2\ell=0$.  Thus $d([S_1]) = [S_1']+[S_3']$, and so the decomposition matrix is given by
\[D=\begin{bmatrix} 1 & 0 & 1\\ 0&1&0\\ 0&0&1\end{bmatrix}\]
which is unimodular but not a permutation matrix.

We note that in this case $C(\mathbb Q_2M)$ is the $3\times 3$-identity matrix, while \[C(\mathbb F_2M) = D^TD =\begin{bmatrix}1 & 0 & 1\\ 0 & 1& 0\\ 1 & 0 & 2\end{bmatrix}.\]  This can also be checked directly.   For instance, $\mathbb F_2P$ is the projective cover of $S_1'$, and so the above exact sequence explains the first column.  The simple $S_2'$ is already projective.   The projective cover $Q$ of $S_3'$ is more complicated. One can show there is an exact sequence $0\to \mathbb F_2P\to Q\to S_3'\to 0$, which explains the third column. This also shows that the global dimension of $\mathbb F_2M$ is $2$.    Nonetheless, it seems easier to compute $C(\mathbb F_2M)$ via the decomposition matrix. Note that $\mathbb F_2M$ is not a symmetric algebra (either by~\cite[Chapter~15]{repbook}, or because the above computations shows that it has global dimension $2$).  So there is no reason \textit{a priori} $C(\mathbb F_2M)$ should be symmetric if one doesn't appeal to the characteristic $0$ situation via Theorem~\ref{t:CartanSquare}.   We remark that the basic algebra of $\mathbb F_2M$ is given by the bound quiver $(Q,I)$ where
\[Q=\begin{tikzcd}\bullet & \bullet\arrow[bend right=60,swap]{r}{\alpha} &\bullet\arrow[bend right=60,swap]{l}{\beta}\end{tikzcd}\] and $I$ is generated by $\alpha\beta$, as can be easily checked, and so $\mathbb F_2M$ has finite representation type.
\end{Example}

It is then natural to ask under what conditions the decomposition matrix is given by a permutation matrix, which we take up in the next section.

We end this section by applying the above machinery  to give a new proof of a result of Glover~\cite{Glover} on the representation type of $M(2,p)$.   Like Glover, we shall use the following result of Dickson~\cite[Theorem~3.1]{Dicksonfiniterep}.

\begin{Thm}[Dickson]\label{t:dickson}
Suppose $A$ is a finite dimensional $K$-algebra and $V$ is a finite dimensional indecomposable left $A$-module with endomorphism algebra $R$ (which we view as acting on the right of $V$).  Suppose that $V$ contains isomorphic submodules $U,W$ such that $UR\cap WR=0=U\rad(R)=W\rad(R)$ and that $\Ext_A^1(M,U)=0$.  Then $A$ has infinite representation type.
\end{Thm}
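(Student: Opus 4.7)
The plan is to construct, for each $n\geq 1$, an indecomposable $A$-module $V_n$ by iteratively gluing $n$ copies of $V$ along the isomorphic submodules $U$ and $W$, and then to distinguish them from one another by a dimension count.

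Fix an isomorphism $\phi\colon U\xrightarrow{\sim}W$ and define $V_n=V^{\oplus n}/N_n$, where $N_n$ is the $A$-submodule generated by $\{\iota_i(w)-\iota_{i+1}(\phi^{-1}(w)) : w\in W,\ 1\leq i\leq n-1\}$, with $\iota_i$ the inclusion into the $i$-th summand. This identifies the copy of $W$ sitting inside the $i$-th $V$ with the copy of $U$ in the $(i+1)$-th $V$ via $\phi^{-1}$, producing a chain of $n$ copies of $V$ glued end to end. A dimension count yields $\dim_K V_n = n\dim_K V - (n-1)\dim_K U$, so the $V_n$ have pairwise distinct $K$-dimensions; provided each is indecomposable, they furnish an infinite family of pairwise non-isomorphic indecomposable $A$-modules, so $A$ has infinite representation type.

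The heart of the argument is showing each $V_n$ is indecomposable, which I would do by proving $\End_A(V_n)$ is local. An endomorphism of $V_n$ lifts to a matrix $(\psi_{ij})\in R^{n\times n}$ of elements of $R$ satisfying compatibility with the gluing relations. The hypotheses $UR\cap WR=0$ and $U\rad(R)=W\rad(R)=0$ are engineered so that this compatibility, combined with the separation of the $R$-spans of $U$ and $W$, forces the matrix to be upper (or lower) triangular modulo $\rad(R)$ and to have constant diagonal entry modulo $\rad(R)$; equivalently, the image of $\End_A(V_n)$ in its semisimple quotient is one-dimensional. The Ext vanishing hypothesis (reading the typographical $M$ as $V$) is then used to prevent the iterated extension defining $V_n$ from splitting at any stage of the natural filtration $V^{(1)}\subset V^{(1)}+V^{(2)}\subset\cdots\subset V_n$, so that no nontrivial idempotent arises in $\End_A(V_n)$. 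Combining these with the fact that $R$ is local (since $V$ is finite-dimensional and indecomposable) gives locality of $\End_A(V_n)$, and hence indecomposability by Fitting's lemma.

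The main obstacle will be this endomorphism analysis, because the three hypotheses must interlock cleanly across all $n$ gluings at once. The $R$-module conditions on $U$ and $W$ have to prevent off-diagonal entries of the endomorphism matrix from escaping into the interior of $V$, and simultaneously the Ext vanishing has to foreclose every possible splitting in the iterated filtration, while the indecomposability of $V$ supplies locality of $R$ at the base. Propagating this control through all $n$ stages, rather than losing it at some intermediate step, is where the bookkeeping requires care; a cleaner formulation would probably package the successive gluings into a single pushout diagram to which the Ext vanishing hypothesis applies once and for all.
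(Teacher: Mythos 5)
The paper offers no proof of this statement to compare against: it is quoted verbatim (including the typographical $M$ for $V$) from Dickson's paper \cite{Dicksonfiniterep} and used as a black box. Judged on its own terms, your construction is the right one: the amalgamated chains $V_n=V^{\oplus n}/N_n$ are exactly Dickson's modules, and they are the mechanism behind the string modules $(\alpha\gamma\inv\beta)^n$ that reappear in the proof of Theorem~\ref{t:m(2,2)}. The dimension count is also correct, but note that it silently uses $U\cap W=0$ (which follows from $U\cap W\subseteq UR\cap WR=0$ since $1\in R$); this is what makes the defining map $W^{\oplus(n-1)}\to N_n$ injective, hence $\dim_K N_n=(n-1)\dim_K U$, and what lets each copy of $V$ embed into $V_n$.

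The genuine gap is in the indecomposability argument, in two places. First, your opening step --- ``an endomorphism of $V_n$ lifts to a matrix $(\psi_{ij})\in R^{n\times n}$'' --- is not automatic for a quotient of $V^{\oplus n}$ and is exactly where the hypothesis $\Ext^1_A(V,U)=0$ must be spent: since $N_n\cong W^{\oplus(n-1)}\cong U^{\oplus(n-1)}$ as $A$-modules, one gets $\Ext^1_A(V^{\oplus n},N_n)=0$, so $\Hom_A(V^{\oplus n},V^{\oplus n})\to\Hom_A(V^{\oplus n},V_n)$ is surjective and every endomorphism of $V_n$ lifts to an endomorphism of $V^{\oplus n}$ carrying $N_n$ into itself. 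The role you assign to the Ext hypothesis instead --- ``to prevent the iterated extension from splitting at any stage'' --- is backwards: $\Ext^1_A(V,U)=0$ says that extensions of $V$ by $U$ \emph{do} split, so it cannot be what prevents splitting, and with that misattribution your lifting step is left with no justification at all. Second, the actual heart of the theorem --- showing that $UR\cap WR=0=U\rad(R)=W\rad(R)$, together with locality of $R$ (from indecomposability of $V$), force every lifted matrix to be, modulo $\rad(R)$, a common diagonal unit plus a strictly triangular part, so that $\End_A(V_n)$ is local --- is only described as ``engineered so that\dots'' and never carried out. As a plan this is faithful to Dickson's argument; as a proof, the one computation that constitutes the theorem is missing.
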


Glover showed that $\mathbb F_pM(2,p)$ has a projective indecomposable module meeting the criterion of Dickson's theorem by describing in detail all the projective indecomposable $\mathbb F_pM(2,p)$-modules.  We shall get away with a less detailed analysis of the projective indecomposables by using the symmetry of the Cartan matrix.  Our argument will work for a class of monoids.

An ideal $I$ of a monoid with zero  is \emph{$0$-minimal} if $I^2\neq \{0\}$ and there are no proper nonzero ideals contained in $I$.  For example, in $M(n,q)$, the ideal of matrices of rank at most $1$ is $0$-minimal since all rank $1$ matrices generate the same principal ideal.  If $I$ is $0$-minimal, then $I\setminus \{0\}$ is a $\mathscr J$-class containing a nonzero idempotent; see~\cite{qtheor} for details.

\begin{Lemma}\label{l:infinite.rep.type.small}
Let $M$ be a monoid with zero and group of units $G$.   Suppose that $k$ is a splitting field for $M$ of characteristic $p>0$.  Assume that $M$ has a $0$-minimal ideal $I$ with maximal subgroup $H=G_e$, where $e$ is a nonzero idempotent of $I$, and that:
\begin{enumerate}
\item the Cartan matrix of $\mathbb CM$ is symmetric;
\item $p\nmid |H|$;
\item $p$ divides the orders of the left and right stabilizers in $G$ of any element of $M\setminus G$;
\item there is a simple $kH$-module $T$ such that $\rad(\Ind_H(T))\cong S$ where $S$ is the trivial $kG$-module inflated to $kM$.
\end{enumerate}
Let $P$ be the projective cover of $S$.  Then
\begin{enumerate}
\item [(a)]  $Q=\Ind_H(T)$ is the projective cover of the simple module $S'=\Ind_H(T)/\rad(\Ind_H(T))$
\item [(b)]  $P$ contains a submodule $A\cong S\oplus Q$.
\item [(c)] Every endomorphism of $P$ preserves both summands of $A$.
\item [(d)] Every nilpotent endomorphism of $P$ annihilates $A$.
\end{enumerate}
In particular, $kM$ has infinite representation type.
\end{Lemma}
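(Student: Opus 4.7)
The plan is to establish parts (a)--(d) and then invoke Dickson's Theorem~\ref{t:dickson} with $V=P$, $U$ the $S$-summand of $A$, and $W=\phi(\rad Q)\cong S$ inside $Q\subseteq A$; projectivity of $P$ supplies $\Ext^1(P,U)=0$ for free, so (c) and (d) are exactly what is needed to feed Dickson.

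\textbf{Part (a).} Because $I$ is $0$-minimal with $J_e=I\setminus\{z\}$, we have $Me\setminus L_e=\{z\}$, so $kMe$ has $k$-basis $L_e\cup\{z\}$. The $kM$-linear map $kMe\to k\{z\}$ sending every basis element to $z$ is well-defined and splits the inclusion $k\{z\}\hookrightarrow kMe$, giving $kMe\cong k\{z\}\oplus kL_e$ (the complement is identified with $kL_e$ via $\ell-z\mapsto\ell$). Projectivity of $kMe$ transfers to $kL_e$. Hypothesis~(2) makes $kH$ semisimple, so $T$ is a summand of $kH$; tensoring, $Q=\Ind_H T$ is a summand of $kL_e\otimes_{kH}kH\cong kL_e$, hence projective. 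The Clifford--Munn--Ponizovskii theorem identifies $Q$ as the projective cover of its simple top $S'$.

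\textbf{Part (b).} By Theorem~\ref{t:CartanSquare} together with hypothesis~(1), $C(kM)=D^TC(\mathbb CM)D$ is symmetric. From $\rad Q\cong S$ and $\mathrm{top}(Q)=S'$ we read off $[Q:S]=1$, and symmetry gives $[P:S']=1$, so $\dim\Hom_{kM}(Q,P)=1$. The unique (up to scalar) nonzero $\phi\colon Q\to P$ is either injective or factors as $Q\twoheadrightarrow S'\hookrightarrow P$. I would rule out the second alternative using hypothesis~(3): a simple submodule isomorphic to $S'$ in $\soc P$ would require a nonzero vector in $eP\cap\soc P$ transforming by the $H$-character of $T$, but $p$-divisibility of right stabilizers in $G$ of the non-units kills the $kG$-orbit sums needed to produce such a vector in characteristic $p$. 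Hence $\phi$ is injective, and $\phi(\rad Q)\cong S$ is one copy of $S$ in $P$. The left-stabilizer half of hypothesis~(3), applied symmetrically, shows $\dim\Hom(S,P)\geq 2$, so one may choose a second copy $S_0\subseteq\soc P$ with $S_0\cap\phi(\rad Q)=0$; set $A:=S_0\oplus\phi(Q)\cong S\oplus Q$.

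\textbf{Parts (c), (d), and Dickson.} Let $R=\End_{kM}(P)$. Since $\dim\Hom(Q,P)=1$, any $f\in R$ restricts on $\phi(Q)$ to a scalar multiple of $\phi$, giving $f(\phi(Q))\subseteq\phi(Q)$. For $f(S_0)\subseteq S_0$, one chooses $S_0$ as a specific $R$-invariant line in the $S$-isotypic part of $\soc P$; using hypothesis~(3) and the local structure of $R$, one arranges that $\rad R$ acts trivially on this isotypic component, so $R$ acts through $R/\rad R\cong k$ and every line is $R$-invariant. This gives~(c). For~(d): $\End\phi(Q)\cong k$ because $\Hom(Q,S)=0$ (since $S\ne S'$ and $Q$ is the projective cover of $S'$), so any nilpotent $f\in\rad R$ restricts on $\phi(Q)$ to a nilpotent element of $k$, hence zero; similarly $\End S_0\cong k$ forces $f|_{S_0}=0$. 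Thus $\rad R\cdot A=0$. Now take $U=S_0$, $W=\phi(\rad Q)$ in Theorem~\ref{t:dickson}: (c) gives $UR=S_0$ and $WR=\phi(\rad Q)$, so $UR\cap WR=S_0\cap\phi(\rad Q)=0$; (d) gives $U\rad R=W\rad R=0$; and $\Ext^1(P,U)=0$ because $P$ is projective. Theorem~\ref{t:dickson} then concludes that $kM$ has infinite representation type.

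The main obstacle is part~(b): both the exclusion of $S'$ from $\soc P$ and the construction of a second copy of $S$ in $\soc P$ disjoint from $\phi(\rad Q)$ rest on delicate augmentation/orbit-sum vanishing arguments driven by hypothesis~(3), and these form the technical heart of the proof; the remaining parts (a), (c), (d), and the Dickson application are formal once (b) is in place.
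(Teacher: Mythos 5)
Your overall architecture (Cartan symmetry forces $\dim_k\Hom_{kM}(Q,P)=1$, injectivity of the resulting map $Q\to P$, a second disjoint copy of $S$, endomorphism analysis, then Dickson) is the paper's, and parts (a), (d) and the final application of Theorem~\ref{t:dickson} are essentially correct. But the two steps you yourself flag as the ``technical heart'' are genuinely missing, and the mechanisms you gesture at are not the ones that work. First, to rule out $S'\hookrightarrow \soc{P}$ the paper does not use hypothesis (3) at all: since $eS'\cong T\neq 0$, any copy $V$ of $S'$ in $\soc{kM}$ satisfies $V=kMeV\subseteq kI$, and $kI\cong kL_e^{\,t}\oplus k$ with $kL_e\cong\bigoplus_i\Ind_H(S_i)$; the summands $\Ind_H(S_i)$ with $S_i\ncong T$ do not even have $S'$ as a composition factor (by Clifford--Munn--Ponizovskii), while $\Ind_H(T)$ has socle $\rad(Q)\cong S\ncong S'$. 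Your proposed ``orbit-sum vanishing'' argument via hypothesis (3) is undeveloped and points at the wrong hypothesis.

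Second, and more seriously, the second copy of $S$ is not obtained by an abstract multiplicity count: the paper takes the explicit element $a=\sum_{g\in G}g$ and uses hypothesis (3) to show $am=0=ma$ for all $m\in M\setminus G$ (the stabilizer orders divisible by $p$ kill the orbit sums $|G_m|\sum_{m'\in Gm}m'$), hence $a\in Z(kM)$, $ka\cong S$, and $a\varepsilon=a$ so $ka\subseteq P$; disjointness from the image of $Q$ comes from $ka\cap kI=0$ together with $\psi(Q)\subseteq kI$. This explicit central element is also exactly what makes your part (c) work for $S_0$: for $b\in\varepsilon kM\varepsilon$ one has $ab=ba\in kMa=ka$. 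Your alternative --- ``arrange that $\rad(R)$ acts trivially on the $S$-isotypic component of $\soc{P}$'' --- is unjustified: a general (even nilpotent) endomorphism of $P$ need not preserve a given copy of $S$ in the socle unless that copy is chosen to be the central one $ka$. So as written, (b) and the $S_0$-half of (c) are gaps; the fix is the central-element construction above, which is where hypothesis (3) actually enters the proof.
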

\begin{proof}
First note that the Cartan matrix of $kM$ is symmetric by Theorem~\ref{t:CartanSquare} and (1).  We shall exploit this fact for two particular simple modules.  We may assume that $P=kM\varepsilon$ with $\varepsilon$ a primitive idempotent of $kM$.

Denote by $z$ the zero of $M$.  Note that since $G$ is the stabilizer of $z$, $p\mid |G|$ by (3) and hence $I$ is a proper ideal of $M$, as $p\nmid H$.   Note that $z$ is a central idempotent and $kMz\cong k$, and so $kM\cong kM/kz\times k$ as rings.  Then $kM(e-z)\cong kL_e$  as a $kM$-module by $0$-minimality of $I$, and so $kL_e$ is a projective module. Note that $kI/kz$ is isomorphic as a left $kM$-module to a direct sum of $t$ copies of $kL_e$, where $t$ is the number of nonzero $\eL$-classes of $I$,  by~\cite[Lemma~5.18]{repbook}.  Since $kH$ is semisimple, we have that $kL_e =\Ind_{H}(kH)\cong \bigoplus_{i=1}^r\Ind_{H}(S_i)$ where $S_1,\ldots, S_r$ are the simple $kH$-modules with multiplicities.  Let us assume without loss of generality that $S_1=T$. Note that since $\Ind_H(T)$ is indecomposable (cf.~\cite[Corollary~4.10]{repbook})  and a direct summand in the projective module $kL_e$, we deduce that it is the projective cover $Q$ of the simple module $S'=\Ind_H(T)/\rad(\Ind_H(T))$.  Note that $eS'\cong T$, whereas $eS=0$, and so $S'\ncong S$.  Thus  $S$ is a composition factor of the projective cover $Q$ of $S'$ with multiplicity $1$, and so by symmetry of the Cartan matrix, $S'$ is a composition factor of $P$ with multiplicity $1$, that is, $\dim_k\Hom_{kM}(Q,P)=1$.

In particular,  there is nonzero homomorphism $\psi\colon Q\to P$, which is unique up to multiplication by a scalar in $k^\times$.  Since $\rad(Q)\cong S$ is simple, if $\psi$ is not injective, then it has image $S'$.  But then $S'$ would be in $\soc P\subseteq \soc {kM}$.  We claim that $S'$ is not isomorphic to a submodule $V$ of $\soc{kM}$.  Indeed, if it were, then since $T\cong eS'\neq 0$, we have that $V=KMeV\subseteq kI$.  But $kI\cong  kL_{e}^t\oplus k$ (where $k$ is the trivial $kM$-module).   Recalling that $kL_e\cong \oplus_{i=1}^r\Ind_{H}(S_i)$, it follows that $S'$ is not isomorphic to any submodule of $kI$ as $\Ind_H(S_i)$ does not have $S'$ as a composition factor if $S_i\ncong T$ by~\cite[Theorem~5.5]{repbook}, whereas $\Ind_H(T)$ has socle $S\ncong S'$.   We conclude that $\psi\colon Q\to P$ is injective.  Since $kIQ=kMeQ=Q$, it follows that $\psi(Q)\subseteq kI\psi(Q)\subseteq kI$.

We next show that $P$ contains a second submodule $U$ isomorphic to $S$ with $U\cap \psi(Q)=0$.   Let  $U$ be the $k$-span of $a=\sum_{g\in G} g$.  We claim that $a\in Z(kM)$ and spans an isomorphic copy $U$ of $S$.  Obviously, $ga=a=ag$ for all $g\in G$.  If $m\in M\setminus G$, then by assumption the orders of the left and right stabilizers of $m$ are divisible by $p$.  But $am=|G_m|\sum_{m'\in Gm}m'=0$ (where $G_m$ is the left stabilizer of $m$), and, similarly, $ma=0$.     This argument shows that $a\in Z(kM)$ and $U\cong S$. Note that $a\varepsilon =\varepsilon a=a$, as $kMa\cong kM\varepsilon/\rad(kM)\varepsilon=k(\varepsilon +\rad(kM)\varepsilon)$ implies that $\varepsilon a=a$.  Thus $U\subseteq P$.  Since $U\cap kI=0$, we deduce that $A=U\oplus \psi(Q)$ is a submodule of $P$ (necessarily contained in the radical as $P$ is indecomposable projective).  Also $U$ and $W=\soc{\psi(Q)}$ are isomorphic copies of $S$ intersecting trivially.

Since $\End_{kM}(P) \cong \varepsilon kM\varepsilon$ (acting on the right), it remains to show that $U\varepsilon kM\varepsilon\subseteq U$,  $\psi(Q)\varepsilon kM\varepsilon\subseteq \psi(Q)$ and $U\varepsilon \rad(kKM)\varepsilon=\psi(Q)\varepsilon \rad(kM)\varepsilon=0$.  We proceed to do this.  Let $b\in \varepsilon kM\varepsilon$.  Then $ab=ba\in U$.  On the other hand, $u\mapsto \psi(u)b$ is a homomorphism $Q\to P$, and hence is a multiple of $\psi$.  Thus $\psi(Q)b\subseteq \psi(Q)$. If $b\in \varepsilon \rad(kKM)\varepsilon$, then $ab=ba=0$ as $U$ is simple and $b\in \rad(kM)$.  On the other hand, every nonzero homomorphism from $Q\to P$ is a nonzero scalar multiple of $\psi$, and hence has the same image as $\psi$.   Since the endomorphism of $P$ given by right multiplication by $b$ is nilpotent, we cannot have $\psi(Q)b=\psi(Q)$.  Therefore, $q\mapsto \psi(q)b$ is the zero homomorphism $Q\to P$, and so $\psi(Q)b=0$.  The final statement follows by applying Dickson's theorem (Theorem~\ref{t:dickson}) to the indecomposable projective $P$ and the submodules $U\cong W$, noting that $\Ext^1_{kM}(P,S)=0$ because $P$ is projective.  This completes the proof.
\end{proof}

As a corollary, we obtain a new proof of Glover's theorem.

\begin{Thm}[Glover]\label{t:glover}
Let $q$ be a power of the prime $p$ and $k$ a field of characteristic $p$ containing $\mathbb F_q$.  Then $kM(n,q)$ has infinite representation type for $n\geq 2$.  On the other hand, $kM(1,q)\cong k^q$ is semisimple.
\end{Thm}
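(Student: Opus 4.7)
The plan is to handle the easy case $n=1$ directly and, for $n\geq 2$, first reduce to $n=2$ via an idempotent corner and then apply Lemma~\ref{l:infinite.rep.type.small}. For $n=1$, note that $M(1,q)$ is the multiplicative monoid $\mathbb F_q^{\times}\cup\{z\}$ of the field $\mathbb F_q$, where $z=0_{\mathbb F_q}$ is a central absorbing element and hence a central idempotent of $kM(1,q)$. The resulting central idempotent decomposition gives $kM(1,q)\cong kz\oplus k\mathbb F_q^{\times}$, and since $p\nmid q-1$ and $k\supseteq\mathbb F_q$ contains a primitive $(q-1)$-th root of unity, $k\mathbb F_q^{\times}\cong k^{q-1}$, whence $kM(1,q)\cong k^q$.

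For $n\geq 2$, first reduce to $n=2$. Let $e$ be any rank-$2$ idempotent in $M(n,q)$ (for instance $e=\operatorname{diag}(1,1,0,\ldots,0)$). Then $eM(n,q)e\cong M(2,q)$ as monoids, so $e\cdot kM(n,q)\cdot e\cong kM(2,q)$ as $k$-algebras. A standard Krull--Schmidt argument via the exact corner functor $V\mapsto eV$ and its adjoint $N\mapsto kM(n,q)e\otimes_{kM(2,q)}N$ shows that every indecomposable $kM(2,q)$-module is a direct summand of $eW$ for some indecomposable $kM(n,q)$-module $W$; hence infinite representation type for $kM(2,q)$ propagates to $kM(n,q)$, and it suffices to treat $n=2$.

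For $n=2$, apply Lemma~\ref{l:infinite.rep.type.small} with $I$ the $0$-minimal ideal of rank-$\leq 1$ matrices (together with $0$), $e$ a rank-$1$ idempotent (say $E_{11}$), and $H=G_e\cong\mathbb F_q^{\times}$. Conditions (1)--(3) of the lemma are routine: $\mathbb CM(2,q)$ is semisimple (Putcha--Kov\'acs), so its Cartan matrix is the identity, hence symmetric; $|H|=q-1$ is coprime to $p$; and any $A\in M(2,q)\setminus G$ has both left and right stabilizers in $G=GL(2,q)$ equal either to all of $G$ (if $A=0$, and $|G|=(q^2-1)(q^2-q)$ is divisible by $q$) or to the pointwise stabilizer of a line (if $A$ has rank $1$), of order $q(q-1)$, again divisible by $p$. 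For condition~(4), take $T$ to be the trivial character of $H$; then $\Ind_H(T)$ is identified with the permutation module $k\mathbb P^1(\mathbb F_q)$, with $A\in M(2,q)$ acting on a line $\ell$ by $A\odot\ell=A\ell$ if $A\ell\neq 0$ and $A\odot\ell=0$ otherwise. The vector $\sigma=\sum_{\ell}\ell$ is $G$-invariant, and every rank-$1$ matrix $A$ with image $m$ maps $q$ of the $q+1$ lines to $m$ (the line $\ker A$ to $0$), giving $A\cdot\sigma=q\cdot m=0$ in characteristic $p$. Thus $k\sigma$ is a $kM$-submodule isomorphic to the inflated trivial $kG$-module $S$. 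The remaining step is to show that $k\sigma$ is the \emph{only} proper nonzero $kM$-submodule of $k\mathbb P^1(\mathbb F_q)$: given any $x=\sum c_{\ell}\ell$ with not all $c_{\ell}$ equal, applying two rank-$1$ matrices with a common image and with kernels at lines of unequal coefficient produces a nonzero scalar multiple of some individual line, and then further rank-$1$ actions from that line produce every other line, forcing the whole module. With all four hypotheses verified, the lemma delivers infinite representation type for $kM(2,q)$.

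The main obstacle is the submodule analysis in hypothesis~(4). As a mere $kG$-module, $k\mathbb P^1(\mathbb F_q)$ splits as $k\sigma\oplus\ker(\mathrm{aug})$ (because the composition $k\to k\mathbb P^1\to k$ through the inclusion of $\sigma$ and the augmentation is multiplication by $q+1\equiv 1\pmod p$), but this splitting is \emph{not} preserved by the monoid action: the augmentation kernel is not $M$-invariant, since a rank-$1$ matrix sends an arbitrary vector into a one-dimensional image line whose augmentation is generally nonzero. It is exactly this monoid-theoretic rigidity that collapses $k\mathbb P^1(\mathbb F_q)$ into a local $kM$-module of length two with radical $k\sigma$, and this is the phenomenon that the rest of the argument in Lemma~\ref{l:infinite.rep.type.small} converts into infinite representation type via Dickson's criterion.
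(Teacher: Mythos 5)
Your proposal is correct and follows essentially the same route as the paper: the same direct computation for $n=1$, the same corner idempotent reduction to $n=2$, and the same verification of the hypotheses of Lemma~\ref{l:infinite.rep.type.small} with $I$ the rank-$\leq 1$ ideal, $H\cong\mathbb F_q^\times$, and $\Ind_H(k)\cong k\mathbb P(\mathbb F_q^2)$ having the trivial module (spanned by the sum of all lines) as its radical. Your submodule analysis, showing $k\sigma$ is the unique proper nonzero submodule, is a mild strengthening of the paper's argument that the quotient by $k\sigma$ is simple, but the underlying computation with rank-one matrices is the same.
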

\begin{proof}
If $n=1$, then $M(1,q)$ is just the multiplicative monoid of $\mathbb F_q$. Since the zero of $\mathbb F_q$ is a central idempotent, it follows that $k\mathbb F_q\cong k\times k\mathbb F_q^\times\cong k^q$ as $\mathbb F_q^\times$ is cyclic of order $q-1$.

It is well known that $\mathbb F_q$ is a splitting field for $\mathrm{GL}(n,q)$.
One can easily deduce that if $q\neq p$ or $n>2$, then $k\mathrm{GL}(n,q)$ has infinite representation type (because the $p$-Sylow subgroups of $\mathrm{GL}(n,q)$ are not cyclic~\cite{Webbbook}), and so $n=2$ is the only interesting case.  But we give a direct reduction to the case $n=2$.  Let \[e=\begin{bmatrix} I_2 & 0_{2,n-2} \\ 0_{n-2,2} & 0_{n-2}\end{bmatrix}.\]  Then $eM(n,q)e\cong M(2,q)$.  Now if $A$ is any algebra and $e$ is an idempotent, then the functor $F(V)=Ae\otimes_{eAe}V$ from $eAe$-modules to $A$-modules preserves indecomposability~\cite[Proposition~4.10]{repbook} and reflects isomorphism (since $eF(V)\cong V$).  Hence, if $eAe$ has infinite representation type, then so does $A$.  It, therefore, suffices to prove that $kM(2,q)$ has infinite representation type.

We verify the hypotheses of Lemma~\ref{l:infinite.rep.type.small}.
 The algebra $\mathbb CM(2,q)$ is semisimple~\cite{putchasemisimple,Kovacs,repbook} (although the case $n=2$ is easier) and hence has a symmetric Cartan matrix.   Let $I$ be the ideal of all matrices of rank at most $1$.  Then $I$ is $0$-minimal and if $e=E_{11}$, then $H=G_e\cong \mathbb F_q^\times$, as it consists of all $2\times 2$ matrices with upper left corner in $\mathbb F_q^\times$ and all remaining entries $0$, and so $p\nmid |H|$.   Also, $L_e$ consists of the rank $1$ matrices whose second column is zero.

If $A$ is a  rank one matrix, the left stabilizer in $\mathrm{GL}(2,q)$ of $A$ is the pointwise stabilizer of the image of $A$ on column vectors.  Similarly, the right stabilizer is the pointwise stabilizer of the image of $A$ on row vectors.  Both these stabilizers have size $q(q-1)$, which is divisible by $p$.

Let $S$ be the trivial $k\mathrm{GL}(2,q)$-module, inflated to $kM(2,q)$. We show that $S$ is the socle of $\Ind_H(k)$ where $k$ is the trivial module. The right action of $H$ on an element of $L_e$ just multiplies the first column (which is the only nonzero column) by a nonzero scalar.   Thus, as $kM$-modules, $\Ind_H(k)\cong k[L_e/H]\cong k\mathbb P(\mathbb F_q^2)$ where $\mathbb P(\mathbb F_q^2)$ is the projective space of lines in $\mathbb F_q^2$.  The action of a matrix $A$ is as in \eqref{eq:line.act}.

 Then the $k$-span of the sum $y$ of all the lines is $M(2,q)$-invariant and provides a copy of $S$.  Indeed, it is clearly fixed by all invertible matrices.  If $A$ is a rank $1$-matrix, then it annihilates one line and sends the remaining $q$ lines to its image $\ell$.  We deduce that $A\odot y=q\ell=0$.  Thus $ky\cong S$ as a submodule of $k\mathbb P(\mathbb F_q^2)$.   The quotient $k\mathbb P(\mathbb F_p^2)/S$ is a simple $kM(2,q)$-module $S'$.  Indeed,  any coset in $k\mathbb P(\mathbb F_q^2)/S$ can be uniquely represented by a linear combination $x=\sum_{\ell\in \mathbb P(\mathbb F_q^2)}c_{\ell}\ell$ with $\sum c_\ell=0$.  If $x\neq 0$ and $\ell_0\in \mathbb P(\mathbb F_q^2)$, then  choose a line $\ell'$ with $c_{\ell'}\neq 0$ and choose a rank one matrix $A$ with kernel $\ell'$ and image $\ell_0$.  Then $A\odot x = \sum_{\ell\neq \ell'}c_{\ell}\ell_0 =-c_{\ell'}\ell_0$.  Since $\ell_0$ was arbitrary, we deduce that the submodule generated by $x+S$ is $S'$.  This completes the proof.
\end{proof}

For the case, of $kM(2,2)$ with $k$ a field of characteristic $2$, we can give a quiver presentation of the basic algebra.  It turns out to be a string algebra of infinite representation type, which was essentially studied by Gelfand and Ponomarev~\cite{lorentz}.

\begin{Thm}\label{t:m(2,2)}
Let $k$ be a field of characteristic $2$.  Then the basic algebra of $kM(2,2)$ is given by the quiver presentation with quiver
\[\begin{tikzcd}[cells={nodes={}}]\arrow[loop left, distance=3em, start anchor={[yshift=-1ex]west}, end anchor={[yshift=1ex]west}]{}{\gamma}\arrow[bend right=60,swap]{r}{\alpha}\bullet &\bullet\arrow[bend right=60,swap]{l}{\beta} &\bullet & \bullet\end{tikzcd}\] and relations $\alpha\beta=\gamma\beta=\gamma^2=\alpha\gamma=0$.  Thus $kM(2,2)$ is Morita equivalent to a string algebra of infinite representation type.
\end{Thm}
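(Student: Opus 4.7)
My plan is to determine the basic algebra of $kM(2,2)$ by computing the Loewy structure of each projective indecomposable module (PIM), combining the decomposition-matrix machinery of Theorem~\ref{t:CartanSquare} with the explicit module constructions from the proofs of Lemma~\ref{l:infinite.rep.type.small} and Theorem~\ref{t:glover}.

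By Clifford-Munn-Ponizovskii, $kM(2,2)$ has four simple modules: the trivial $S_0$, the natural representation $S_1\cong \mathbb F_2^2$ (the simple top of $k\mathbb P(\mathbb F_2^2)$, with apex rank~$1$), and the simples $S_{1a}$ (dimension~$1$) and $S_{1b}$ (dimension~$2$) with apex~$1$, inflated respectively from the trivial and the natural $\mathbb F_2\mathrm{GL}(2,2)$-modules (with singular matrices acting as zero). Over $F=\mathbb Q_2$ the algebra $FM(2,2)$ is semisimple with five simples, so I compute the $5\times 4$ decomposition matrix $D$ by reducing each characteristic-zero simple modulo~$2$: the trivial and sign characters of $\mathrm{GL}(2,2)\cong S_3$ both collapse to the trivial, while the apex rank~$1$ simple $F\mathbb P(\mathbb F_2^2)$ reduces with composition factors $S_1$ and $S_{1a}$ via the short exact sequence $0\to S_{1a}\to k\mathbb P\to S_1\to 0$ already identified in the proof of Theorem~\ref{t:glover}. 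Theorem~\ref{t:CartanSquare} then gives $C(kM)=D^TD$, and this shows $P_{S_0}=S_0$ and $P_{S_{1b}}=S_{1b}$ are simple projective (the two isolated vertices of the quiver), $\dim P_{S_1}=3$, and $\dim P_{S_{1a}}=5$.

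From the proof of Theorem~\ref{t:glover}, $P_{S_1}=k\mathbb P$ has radical $\cong S_{1a}$ and $\rad^2 P_{S_1}=0$, yielding the arrow~$\beta$ together with the relations $\alpha\beta=\gamma\beta=0$ (both are length-two paths starting at vertex~$2$, forced to vanish since $\rad^2 P_{S_1}=0$). By Lemma~\ref{l:infinite.rep.type.small}, $P_{S_{1a}}$ contains a submodule $A=U\oplus Q$ where $U=ka$ with $a=\sum_{g\in\mathrm{GL}(2,2)}g$ and $Q\cong P_{S_1}$; since $\dim A=4$ and $\dim P_{S_{1a}}=5$, the radical must be exactly $A$, so the first Loewy layer is $S_{1a}\oplus S_1$ (producing the arrows $\gamma$ and $\alpha$) and $\rad^2 P_{S_{1a}}=\rad A\cong S_{1a}$ sits inside~$Q$. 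The absence of any $S_1$ composition factor in $\rad^2/\rad^3$ then forces $\alpha\gamma=0$.

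The decisive step is to take the lift of $\gamma$ in the basic algebra $B=\varepsilon kM\varepsilon$ to be $a$ itself. I plan to verify $a\in B$ by showing that $a$ annihilates each PIM other than $P_{S_{1a}}$, which is a direct computation: $\sum_{g\in\mathrm{GL}(2,2)}g$ acts on $P_{S_0}=kz$ as $|G|z=0$, on each line $\ell$ in $P_{S_1}=k\mathbb P$ as $\sum_g g\ell=2y=0$, and on $S_{1b}$ as the zero matrix (the sum of the six invertible $2\times 2$-matrices over $\mathbb F_2$). With $\gamma=a$, centrality of $a$ combined with the orthogonality $\varepsilon_{1a}\varepsilon_1=0$ gives $\alpha\cdot a=a\cdot\alpha=0$ (an independent derivation of $\alpha\gamma=0$), while $a^2=|G|\cdot a=0$ in characteristic~$2$ delivers the critical relation $\gamma^2=0$; the path $\beta\alpha$ is then forced to span the one-dimensional $\rad^2/\rad^3$ at vertex~$1$. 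A dimension check (the bound path algebra has dimension $4+3+1=8$, matching the sum of Cartan entries) confirms that the admissible ideal is generated by exactly these four relations. The string-algebra axioms are immediate from inspection of the bound quiver, and infinite representation type is inherited from Theorem~\ref{t:glover}. The main obstacle is verifying that $a$ actually lies in the basic algebra and lifts the loop generator of $\varepsilon_{1a}\rad(B)/\rad^2(B)\varepsilon_{1a}$; once this is settled, the quadratic relations fall out essentially for free from the centrality and nilpotence of $a$.
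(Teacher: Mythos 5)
Your proposal is correct and follows essentially the same route as the paper: compute the decomposition matrix, obtain $C(kM)=D^TD$, read off the quiver from the Loewy structure of $Q=k\mathbb P(\mathbb F_2^2)$ and of the five-dimensional projective cover $P$ of the one-dimensional apex-$\mathrm{GL}(2,2)$ simple, and derive the relations from $\rad^2(Q)=0$ and $\rad(P)\cong T_1'\oplus Q$. Your one variation --- choosing the loop $\gamma$ to be the central element $a=\sum_{g\in \mathrm{GL}(2,2)}g$ so that $\gamma^2=|G|a=0$ and $\alpha\gamma=0$ follow from centrality and orthogonality of idempotents --- is a concrete specialization of what the paper extracts from parts (c) and (d) of Lemma~\ref{l:infinite.rep.type.small}, whose proofs rest on exactly that element $a$.
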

\begin{proof}
Note that $\mathrm{GL}(2,2)\cong S_3$ and $\mathbb F_2^\times$ is trivial.  Thus every field is a splitting field for $M(2,2)$ by Theorem~\ref{t:Brauer+CMP}, and so $(\mathbb Q_2,\mathbb Z_2,\mathbb F_2)$ is a splitting $2$-modular system. The general case will follow from the result for $\mathbb F_2$ by extending the scalars, and so we work over $\mathbb F_2$.

 The decomposition map takes both the trivial representation $T_1$ and the sign representation $T_2$ of $\mathbb Q_2S_3$ (inflated to $\mathbb Q_2M(2,2)$) to the trivial representation $T_1'$ of $\mathbb F_2S_3$ (inflated to $\mathbb F_2M(2,2)$).  The $2$-dimensional irreducible representation $T_3$ of $\mathbb Q_2S_3$ (inflated to $\mathbb Q_2M(2,2)$) is the set of vectors with coordinates summing to $0$ in $\mathbb Q_2^3$ with the action by permuting the coordinates.  This is sent by the decomposition map to the submodule $T_3'$ of $\mathbb F_2^3$ of vectors with coordinates summing to $0$, which is still simple as an $\mathbb F_2S_3$-module.  The decomposition map takes the trivial $\mathbb Q_2M(2,2)$-module $T_4$ to the trivial $\mathbb F_2M(2,2)$-module $T_4'$ and it takes the simple $\mathbb Q_2M(2,2)$-module $T_5=\mathbb Q_2\mathbb P(\mathbb F_2^2)$ to $[T_1']+[S']$ where  $S'$ is as in the proof of Theorem~\ref{t:glover} (see also the discussion in Example~\ref{ex:singular.2.2}). Thus
the decomposition matrix (with respect to the orderings indicated above) is
\[D= \begin{bmatrix} 1& 0 &0 &0 \\ 1&0 & 0 &0\\ 0 & 1 & 0 & 0\\ 0 & 0 & 1& 0\\ 1& 0&0&1\end{bmatrix}.\]
 Since $\mathbb Q_2M(2,2)$ is semisimple, we have by Theorem~\ref{t:CartanSquare} \[C(\mathbb FM(2,2)) = D^TD=\begin{bmatrix}3&0&0&1\\ 0 &1&0&0\\ 0&0&1&0\\ 1 &0 & 0 &1\end{bmatrix}\]
Therefore, $[T_3']$ and $[T_4']$ are isolated vertices in the quiver, there are no loops at $[S']$ and there are single edges $\beta\colon [S']\to [T_1']$ and $\alpha\colon [T_1']\to [S']$.  Lemma~\ref{l:infinite.rep.type.small} and the proof of Theorem~\ref{t:glover} show that the projective cover $Q=\mathbb F_2\mathbb P(\mathbb F_2^2)$ of $S'$ has radical isomorphic to $T_1'$ and hence $\rad^2(Q)=0$. Therefore, $\alpha\beta=0$.  However, $\beta\alpha\neq 0$ since we saw in the proof of Lemma~\ref{l:infinite.rep.type.small} that $S'$ is not in the socle of the projective cover $P$ of $T_1'$.   But then the empty path and $\beta\alpha$ provide at most $2$ composition factors of $P$ isomorphic to $T_1'$, and so there must be a loop $\gamma$ at $[T']$.  Then $\gamma\beta=0$ because $\rad^2(Q)=0$.    Finally, we must have $\gamma^2=0=\alpha\gamma$.  Indeed, Lemma~\ref{l:infinite.rep.type.small} yields that $\rad(P)$ contains a submodule isomorphic to $T_1'\oplus Q$ and any element of $\varepsilon \rad(\mathbb F_2M(2,2))\varepsilon$ annihilates this submodule under right multiplication (where $P=\mathbb F_2M(2,2)\varepsilon$ with $\varepsilon$ a primitive idempotent).  Looking at the Cartan matrix, we see that $\rad(P)\cong T_1'\oplus Q$, and so $\gamma^2=0=\alpha\gamma$.

It follows that the basic algebra is given by the above quiver presentation, and it is a string algebra~\cite{stringalgebra}.  Note that the strings $(\alpha\gamma\inv\beta)^n$ with $n\geq 1$ give distinct string modules, providing another proof that this algebra has infinite representation type.
\end{proof}

We now use Lemma~\ref{l:infinite.rep.type.small} to construct, for each prime $p>0$, a monoid $M$ such that each maximal subgroup of $M$ has finite representation type over every field, $KM$ is semisimple whenever $K$ has characteristic different than $p$ and has infinite representation type when $K$ has characteristic $p$.   Define $M_p = C_p\cup T$ where $C_p$ is a cyclic group of order $p$ generated by $x$ and $T=\{z\}\cup  \{0,\ldots, p\}^2$.  The element $z$ is a zero element and \[(i,j)(k,\ell) = \begin{cases}(i,\ell), & j\neq k\\ z, & \text{else.}\end{cases}\]  We have $xs=s=sx$ for all $s\in T$.  Then $M_p$ has a cyclic group $C_p$ of order $p$ for the group of units and all other maximal subgroups are trivial, $T$ is a $0$-minimal ideal and $T$ is the Rees matrix semigroup over the trivial group with sandwich matrix $J-I$ where $J$ is the $(p+1)\times (p+1)$ all ones matrix and $I$ is the identity matrix.

\begin{Prop}\label{p:all.but.p}
The algebra $KM_p$ is semisimple if the characteristic of $K$ is different than $p$.  If $K$ has characteristic $p$, then $KM_p$ has infinite representation type, but each maximal subgroup of $M$ has finite representation type over $K$.
\end{Prop}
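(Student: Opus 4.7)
The plan is to establish the three assertions in turn.

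For semisimplicity when $\mathrm{char}(K)\neq p$, I will invoke the sandwich matrix criterion for monoid algebras~\cite[Theorem~5.21]{repbook}. The regular $\mathscr J$-classes of $M_p$ are $C_p$, $T\setminus\{z\}$, and $\{z\}$, with maximal subgroups $C_p$, trivial, and trivial respectively. The only nontrivial sandwich matrix is $J-I$ of size $(p+1)\times(p+1)$, and since $J$ has eigenvalues $p+1$ and $0$, one has $\det(J-I)=(-1)^p p$, which is invertible in $K$ exactly when $\mathrm{char}(K)\neq p$. Combined with Maschke's theorem for $KC_p$, this proves the first assertion.

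For the finite representation type of the maximal subgroups, note that every maximal subgroup of $M_p$ is either trivial or $C_p$. In characteristic $p$, $KC_p\cong K[x]/(x^p)$ is a local uniserial Nakayama algebra whose indecomposables are the $p$ modules $K[x]/(x^i)$ for $1\le i\le p$, so $KC_p$ has finite representation type.

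The main work is proving infinite representation type of $KM_p$ in characteristic $p$. I plan to apply Lemma~\ref{l:infinite.rep.type.small} with the $0$-minimal ideal $I=T$ and nonzero idempotent $e=(0,1)$, whose maximal subgroup $H=G_e$ is trivial. First, Theorem~\ref{t:Brauer+CMP} guarantees that any field $K$ of characteristic $p$ is a splitting field for $M_p$, since every maximal subgroup is trivial or $C_p$ and $KC_p$ is local with residue field $K$. I then verify the four hypotheses: (1) $\mathbb{C}M_p$ is semisimple by the first part, so its Cartan matrix is the identity and hence symmetric; (2) $p\nmid|H|=1$; and (3) the relation $xs=s=sx$ for $s\in T$ shows that $C_p$ fixes $T$ pointwise, so the left and right stabilizers in $C_p$ of any $m\in T$ equal $C_p$, which has order $p$.

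The crux is condition (4). Since $H$ is trivial, the only simple $KH$-module is $K$, and $\Ind_H(K)\cong KL_e$ has basis $L_e=\{(i,1):0\le i\le p\}$ and dimension $p+1$. I will check that $y=\sum_{i=0}^p(i,1)$ spans an $M_p$-invariant line isomorphic to the inflated trivial module $S$: $C_p$ fixes $y$ pointwise, and each $(k,j)\in T$ satisfies $(k,j)\cdot y=\sum_{i\ne j}(k,1)=p\cdot(k,1)=0$ in characteristic $p$. To show $Ky$ exhausts the radical, I will verify that the quotient $KL_e/Ky$ is simple of dimension $p$ by a direct computation: writing a nonzero class as $w=\sum_i c_i\overline{(i,1)}$ and setting $S=\sum_i c_i$, if $c_\ell = S$ for every $\ell$ then $w$ is a scalar multiple of $\overline{y}=0$, a contradiction; so some $c_\ell\ne S$, and then $(k,\ell)\cdot w=(S-c_\ell)\overline{(k,1)}$ produces a nonzero scalar multiple of every basis vector $\overline{(k,1)}$, whence $w$ generates the whole quotient. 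This yields $\rad(\Ind_H(K))=Ky\cong S$, verifying (4), and Lemma~\ref{l:infinite.rep.type.small} then delivers infinite representation type of $KM_p$. The main obstacle is precisely this simplicity check for the quotient, which reduces to a short linear-algebra computation from the concrete multiplication in $M_p$.
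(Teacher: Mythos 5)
Your proof is correct and follows essentially the same route as the paper: the sandwich-matrix criterion with $\det(J-I)=(-1)^pp$ for semisimplicity, $KC_p\cong K[x]/(x-1)^p$ for the maximal subgroups, and Lemma~\ref{l:infinite.rep.type.small} applied to $e=(0,1)$ for infinite representation type. The only (cosmetic) difference is in verifying hypothesis (4): the paper identifies $\rad(KL_e)$ with the null space of the sandwich matrix $J-I$, which in characteristic $p$ is spanned by the all-ones vector, while you check directly that $Ky$ is an invariant copy of $S$ with simple quotient; both computations amount to the same thing.
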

\begin{proof}
By construction, the sandwich matrix for the idempotent $e=(0,1)$ is $J-I$.   The matrix $J$ has eigenvalues $0$ and $p+1$, and the latter eigenvalue is a simple eigenvalue with eigenvector the all ones vector if the characteristic of $K$ does not divide $p+1$.   Thus $1$ is an eigenvalue of $J$ over $K$ if and only if $p+1$ is congruent to $1$ modulo the characteristic of $K$, in which case the all ones vector spans the null space of $J-I$.  Since all other sandwich matrices for $M_p$ are $1\times 1$ identity matrices, we conclude that $KM_p$ is semisimple for $K$ of every characteristic except $p$.  If $K$ has characteristic $p$, then $C_p$ has just the trivial representation over $p$, and so $K$ is a splitting field by Theorem~\ref{t:Brauer+CMP}.  Also $KC_p\cong K[x]/(x^p-1)=K[x]/(x-1)^p$ and hence has finite representation type (the indecomposables being given by Jordan forms $J_n(1)$ with $1\leq n\leq p$).  We now verify that $KM_p$ has infinite representation type using Lemma~\ref{l:infinite.rep.type.small}.  Since $\mathbb CM_p$ is semisimple, it has a symmetric Cartan matrix.  The maximal subgroup of $T\setminus \{z\}$ is trivial. The left/right stabilizer of each element of $T$ is $C_p$, and so has order divisible by $p$.  Finally, $KL_e$ has basis $\{(i,1)\mid 0\leq i\leq  p\}$ and the null space of $J-I$ (which is $\rad(KL_e)$) is spanned by the all ones vector, which is the coordinate vector for $v=\sum_{\ell=0}^p(\ell,1)$.  Clearly, $C_p$ fixes $v$ and $z$ annihilates it.  Also, $(i,j)\sum_{\ell =0}^p(\ell,1) = p(i,1)=0$.  Therefore, Lemma~\ref{l:infinite.rep.type.small} applies to deduce  $KM_p$ has infinite representation type.
\end{proof}

It is not too difficult to show that if $K$ has characteristic $p$, then the basic algebra of $KM_p$ is a string algebra of infinite representation type with quiver
\[\begin{tikzcd}[cells={nodes={}}]\arrow[loop left, distance=3em, start anchor={[yshift=-1ex]west}, end anchor={[yshift=1ex]west}]{}{\gamma}\arrow[bend right=60,swap]{r}{\alpha}\bullet &\bullet\arrow[bend right=60,swap]{l}{\beta} &\bullet\end{tikzcd}\] and relations $\alpha\beta=\gamma\beta=\alpha\gamma=\gamma^p=0$.  The argument is similar to that of Theorem~\ref{t:m(2,2)}.

\section{Modular versus nonmodular characteristic}
The purpose of this section is to define modular and strongly modular characteristics, prove the decomposition matrix is a permutation matrix precisely in the non-strongly modular case, and show that a quiver presentation of $kM$ can be lifted to characteristic zero in the nonmodular case, making precise in which sense the characteristic zero theory agrees with the nonmodular positive characteristic setting. All but finitely many primes will be nonmodular.  Of course, everything reduces to the usual definitions for groups.
Here we will need to use a bit more  monoid representation theory, as well as some notions about quivers.

Let $M$ be a finite monoid and $e\in E(M)$ an idempotent.   If $A$ is a commutative ring, then $AL_e$ is a free right $AG_e$-module and $AR_e$ is a free left $AG_e$-module via multiplication.  In fact, $AL_e$ is an $AM$-$AG_e$-bimodule (with left action via \eqref{eq:schutz})  and, dually, $AR_e$ is an $AG_e$-$AM$-bimodule (see~\cite[Chapter~5]{repbook} where this is done for the case $A$ is a field).  If $V$ is an $AG_e$-module, then the \emph{induced} $AM$-module is  $\Ind_{G_e}(V) = AL_e\otimes_{AG_e} V$ and the \emph{coinduced} $AM$-module is $\Coind_{G_e}(V)=\Hom_{AG_e}(AR_e,V)$.  Let us put $AR_e^{\vee}=\Coind_{G_e}(AG_e)=\Hom_{AG_e}(AR_e,AG_e)$.  Note that $AR_e^\vee$ is an $AM$-$AG_e$-bimodule which is free as a right $AG_e$-module because $AR_e$ is a free left $AG_e$-module on $G\backslash R_e$, and hence $AR_e^{\vee}\cong (AG_e)^{|G_e\backslash R_e|}$ as a right $AG_e$-module. The functors $\Ind_{G_e}$ and $\Coind_{G_e}$ are exact because $AL_e$ and $AR_e$ are free $AG_e$-modules on the appropriate side.

There is a natural bimodule homomorphism $\p_e\colon \mathbb ZL_e\to \mathbb ZR_e^{\vee}$ given on $\ell\in L_e$ and $r\in R_e$ by
\[\p_e(\ell)(r) = \begin{cases}r\ell, & \text{if}\ r\ell\in G_e\\ 0 , & \text{else}\end{cases}\] where we note that $r\ell\in eMe$ and belongs to $G_e$ if and only if $r\ell\in J_e$ (cf.~\cite[Chapter~1]{repbook}).  That this is a bimodule homomorphism is verified in~\cite[Chapter~5]{repbook} over fields, but the proof is general.   If we choose as a $\mathbb ZG_e$-basis for $\mathbb ZL_e$ orbit representatives for $L_e/G_e$ and as a $\mathbb ZG_e$-basis for $\mathbb ZR_e^{\vee}$ the dual basis to a $\mathbb ZG_e$-basis of $\mathbb ZR_e$ consisting of orbits representatives for $G_e\backslash R_e$, then the matrix of $\p_e$ as a homomorphism of free right $\mathbb ZG_e$-modules takes on values in $G_e\cup \{0\}$ and is known in semigroup theory as the sandwich matrix of $J_e$ (see~\cite[Chapter~5]{repbook} for details).  Notice that $\mathbb ZL_e$ and $\mathbb ZR_e^{\vee}$ are finitely generated free abelian groups as they are finitely generated free right $\mathbb ZG_e$-modules and $G_e$ is finite.

For a number of reasons, the following alternate description of $\Coind_{G_e}$ will be convenient.  It is a special case of the fact that if $R$ is a ring and $P$ is a finitely generated projective $R$-module, then the natural map $\Hom_R(P,R)\otimes_R V\to \Hom_R(P,V)$ is an isomorphism for any finitely presented $R$-module $V$.

\begin{Prop}\label{p:natural.iso.coind}
Let $M$ be a finite monoid, $e\in E(M)$ and $A$ a commutative ring.  Then there is an isomorphism $\eta_V\colon AR_e^{\vee}\otimes_{AG_e}V\to \Coind_{G_e}(V)$ of $AM$-modules, natural in $V$, for each finitely presented $AG_e$-module $V$ given by $\eta_V(f\otimes v)(r) = f(r)v$.  If $A$ is Noetherian, then finitely presented can be replaced by finitely generated.
\end{Prop}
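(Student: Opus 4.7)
The plan is to recognize this as a direct instance of the Hom-tensor interchange isomorphism for finitely generated projective modules, applied to the projective module $AR_e$ over the group algebra $AG_e$.

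First, I would observe that $AR_e$ is a finitely generated free left $AG_e$-module: the left multiplication action of $G_e$ on $R_e$ is free (see \cite[Chapter~1]{repbook}), so any system of representatives for $G_e\backslash R_e$ is a free $AG_e$-basis of $AR_e$. In particular, $AR_e$ is finitely generated projective as a left $AG_e$-module.

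Second, I would invoke the standard Hom-tensor interchange: for any ring $R$, any finitely generated projective left $R$-module $P$, and any left $R$-module $W$, the natural map
\[\theta_{P,W}\colon \Hom_R(P,R)\otimes_R W\to \Hom_R(P,W),\qquad f\otimes w\mapsto (p\mapsto f(p)w),\]
is an isomorphism of abelian groups. The standard verification is to check this for $P=R$ (both sides reduce to $W$), to deduce it for $P=R^n$ by additivity of $\Hom(-,-)$ and $-\otimes_R W$ in $P$, and finally to split off a summand to handle arbitrary finitely generated projective $P$. Applying this with $R=AG_e$, $P=AR_e$, and $W=V$ produces an isomorphism of abelian groups $\eta_V$ whose formula is exactly $\eta_V(f\otimes v)(r)=f(r)v$.

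Third, I would verify that $\eta_V$ is in fact $AM$-linear and natural in $V$. The left $AM$-action on both sides is induced from the right action of $AM$ on $AR_e$ by the formula $(mf)(r)=f(rm)$; then
\[\eta_V((mf)\otimes v)(r) = (mf)(r)\,v = f(rm)\,v = \eta_V(f\otimes v)(rm) = \bigl(m\cdot \eta_V(f\otimes v)\bigr)(r),\]
and naturality in $V$ is immediate from the formula. Finally, the Noetherian addendum follows because $AG_e$ is finitely generated as an $A$-algebra (since $G_e$ is finite) and hence Noetherian whenever $A$ is, so every finitely generated left $AG_e$-module is finitely presented. In truth, since $AR_e$ is finitely generated projective over $AG_e$, the finite presentation/generation hypothesis on $V$ is superfluous; no step of the argument really uses it, and the main thing to be careful about is simply bookkeeping the various left $AM$-, right $AM$-, left $AG_e$-, and right $AG_e$-actions so that $AM$-linearity of $\eta_V$ comes out cleanly.
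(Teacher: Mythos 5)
Your proposal is correct and is essentially the paper's own argument: the paper simply remarks that the proposition is a special case of the Hom-tensor interchange isomorphism $\Hom_R(P,R)\otimes_R V\to \Hom_R(P,V)$ for $P$ finitely generated projective, which you instantiate with $R=AG_e$ and $P=AR_e$ (free on $G_e\backslash R_e$) and then verify for $AM$-linearity. Your observation that the finiteness hypothesis on $V$ is superfluous is also accurate, since the interchange map is an isomorphism for arbitrary $V$ once $P$ is finitely generated projective.
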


If $A$ is a commutative ring with unit, then $A\otimes_{\mathbb  Z}\mathbb ZL_e\cong AL_e$ and $A\otimes_{\mathbb Z}\mathbb ZR_e^{\vee}\cong AR_e^\vee$.  Only the latter claim requires any verification.  Indeed, fix a transversal $T$ to $G_e\backslash R_e$.   Since $\mathbb ZR_e$ is a free left $\mathbb ZG_e$-module on $T$, we can identify $\mathbb ZR_e^{\vee}$ with $\mathbb ZG_e^T$.  Similarly we can identify $AR_e^{\vee}$ with $AG_e^T$.  Since $T$ is finite, we have right $AG_e$-module isomorphisms  $A\otimes_{\mathbb Z} \mathbb ZR_e^{\vee}\cong A\otimes_{\mathbb Z}\mathbb ZG_e^T\cong AG_e^T\cong  AR_e^{\vee}$ and the isomorphisms are also $AM$-module isomorphisms.

Let us denote $1_A\otimes \p_e\colon AL_e\to AR_e^{\vee}$ by $\p_{e,A}$ for $e\in E(M)$.   We put
$T_e=\p_e(\mathbb ZL_e)\subseteq \mathbb ZR_e^{\vee}$, which is a finitely generated free abelian group.  To decongest notation,
let $T_e(A)=A\otimes_{\mathbb Z} T_e$ for a commutative ring $A$.  Note that if $A$ is a subring of $B$, then
$T_e(B)\cong B\otimes_A T_e(A)$ by transitivity of extension of scalars.  Of course, $T_e(A)$ is a finitely generated free $A$-module.

If $K$ is a field and $V$ is a simple $KG_e$-module, then $\Ind_e(V)$ has simple top, $\Coind_e(V)$ has simple socle and under the identification of Proposition~\ref{p:natural.iso.coind}, $\rad(\Ind_e(V))=\ker (\p_{e,K}\otimes 1_V)$ and the socle of $\Coind_e(V)$ is the image of $\p_{e,K}\otimes 1_V$.  See~\cite[Chapter~5]{repbook} for details.  Thus the image of $\p_{e,K}\otimes 1_V$ is the simple $KM$-module associated to $V$ by Clifford-Munn-Ponizovskii theory.

We shall also need  the following straightforward observation.  Suppose that $A$ is a subring of $B$.  Then extension of scalars commutes with induction and coinduction, that is, if $V$ is a finitely presented $AG_e$-module, then $B\otimes_A\Ind_{G_e}(V)\cong \Ind_{G_e}(B\otimes_A V)$ and $B\otimes_A\Coind_{G_e}(V)\cong \Coind_{G_e}(B\otimes_A V)$ (natural in $V$). The first isomorphism follows directly from Lemma~\ref{l:commute.with.ext}.  For finitely presented modules $\Coind_{G_e}$ commutes with extension of scalars by Proposition~\ref{p:natural.iso.coind} and Lemma~\ref{l:commute.with.ext}.

 If $M$ is a finite monoid, we say that a prime $p>0$ is \emph{strongly modular} for $M$ if there is an idempotent $e$ such that $p$ divides either the order of $G_e$ or the order of the torsion subgroup of the finitely generated abelian group $\coker \p_e$ (this depends only on the $\mathscr J$-class of $e$).    For example, the prime $2$ is strongly modular for the monoid in Example~\ref{ex:singular.2.2} because the sandwich matrix \eqref{eq:sandwich.examp} for the $\J$-class of rank one maps has determinant $2$ and hence the cokernel is a finite elementary abelian $2$-group.  All other primes are non-strongly modular for this example.

Let $M$ be a finite monoid and $p>0$ a prime which is non-strongly modular for $M$.  Let $(F,\mathcal O,k)$ be a splitting $p$-modular system.  Our goal is to show the following facts:  each simple $kM$-module $S$ can be lifted to an $\mathcal OM$-lattice $\til S$ such that $[S]\mapsto [F\otimes_{\mathcal O} \til S]$ gives a bijection between simple $kM$ and simple $FM$-modules (preserving apexes and dimensions); the decomposition matrix is a permutation matrix conjugating the Cartan matrices of $FM$ and $kM$; and the projective cover of $F\otimes_{\mathcal O}\til S$ is $F\otimes_{\mathcal O} P$ where $P$ is the projective cover of $S$ as an $\mathcal OM$-module.

\begin{Lemma}\label{l:kernel.injects}
Let $M$ be a finite monoid and $e\in E(M)$.  Let $p$ be a prime (possibly $0$) and $K$ a field of characteristic $p$.  Then the image of $\p_{e,K}$ is a quotient of $T_e(K)$ and is isomorphic $T_e(K)$ if and only if $p$ does not divide the order of the torsion subgroup of $\coker \p_e$.
\end{Lemma}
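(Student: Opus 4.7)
The plan is to factor $\p_{e,K}$ through $T_e(K)$ and reduce the question to a short $\Tor$ computation. From the definition of $T_e = \p_e(\mathbb Z L_e)$ we have a short exact sequence of abelian groups
\[0 \to T_e \to \mathbb Z R_e^\vee \to \coker \p_e \to 0.\]
Applying $K\otimes_{\mathbb Z}-$ and using right exactness of tensor, together with the fact that $\p_e$ surjects $\mathbb Z L_e$ onto $T_e$, shows that $\p_{e,K} = 1_K\otimes \p_e$ factors as
\[\p_{e,K}\colon KL_e \twoheadrightarrow T_e(K) \xrightarrow{\iota} KR_e^\vee.\]
This already yields the first claim: the image of $\p_{e,K}$ is a quotient of $T_e(K)$. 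Moreover, since both are finite dimensional $K$-vector spaces and the first map is surjective, the image equals $T_e(K)$ (as a $K$-module, hence as a $KM$-$KG_e$-bimodule) if and only if $\iota$ is injective.

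To analyze injectivity of $\iota$, I would use the long exact $\Tor$ sequence associated to the displayed short exact sequence. Because $T_e$ and $\mathbb Z R_e^\vee$ are both free abelian groups, the higher $\Tor$ groups involving them vanish, so the long exact sequence truncates to
\[0 \to \Tor_1^{\mathbb Z}(K,\coker \p_e) \to T_e(K) \xrightarrow{\iota} K\otimes_{\mathbb Z}\mathbb Z R_e^\vee.\]
Thus $\iota$ is injective if and only if $\Tor_1^{\mathbb Z}(K,\coker \p_e) = 0$.

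It remains to identify this $\Tor$-group in terms of the torsion subgroup of $\coker \p_e$. Write $\coker \p_e \cong \mathbb Z^r \oplus T$ where $T$ is the (finite) torsion subgroup; then $\Tor_1^{\mathbb Z}(K,\coker \p_e) \cong \Tor_1^{\mathbb Z}(K,T)$ since $\mathbb Z^r$ is flat. Decomposing $T$ into cyclic summands $\mathbb Z/n_i$ and using the standard computation $\Tor_1^{\mathbb Z}(K,\mathbb Z/n_i) \cong K[n_i]$ (the $n_i$-torsion of $K$ obtained from the short exact sequence $0\to \mathbb Z \xrightarrow{n_i} \mathbb Z \to \mathbb Z/n_i \to 0$), this group vanishes for all $i$ precisely when each $n_i$ is invertible in $K$. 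In characteristic zero this is automatic, while in characteristic $p>0$ this is equivalent to $p\nmid n_i$ for every $i$, which in turn is equivalent to $p\nmid |T|$. Combining everything gives exactly the claimed criterion.

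There is no serious obstacle here; the only thing to be careful about is not conflating $T_e(K)$ with its image in $KR_e^\vee$, since the potential discrepancy is precisely the $\Tor$ term controlled by the torsion in $\coker \p_e$.
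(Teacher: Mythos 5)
Your proof is correct and follows essentially the same route as the paper: both factor $\p_{e,K}$ through $T_e(K)$, identify the failure of injectivity with $\Tor_1^{\mathbb Z}(K,\coker\p_e)$ via the long exact sequence for $0\to T_e\to \mathbb ZR_e^{\vee}\to\coker\p_e\to 0$, and then relate that $\Tor$ group to the $p$-torsion of $\coker\p_e$. The only cosmetic difference is that the paper first reduces to the prime field before computing the $\Tor$ term, whereas you compute $\Tor_1^{\mathbb Z}(K,-)$ directly by decomposing the torsion subgroup into cyclic summands; both are fine.
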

\begin{proof}
 We shall use throughout that the image of a homomorphism $f\colon A\to B$ of modules can be identified with the kernel of the projection $B\to \coker f$.  In particular, exact functors preserves images, and so without loss of generality we may assume that $K$ is the prime field as any field extension of a field is flat and extending scalars to a larger field reflects isomorphisms.

Since tensor products are right exact, they preserve cokernels. Thus $\coker \p_{e,K}=K\otimes_{\mathbb Z} \coker \p_e$.  We derive from the exact sequence of abelian groups $0\to T_e\to \mathbb ZR_e^{\vee}\xrightarrow{\pi} \coker \p_e\to 0$ an exact sequence \[0\to \Tor^{\mathbb Z}_1(K,\coker \p_e)\to T_e(K)\to KR_e^{\vee}\xrightarrow{\pi} \coker \p_{e,K}\to 0.\]   Since $K$ is the prime field, if $p=0$, then $K=\mathbb Q$ is flat as a $\mathbb Z$-module, and so $\Tor^{\mathbb Z}_1(K,\coker \p_e)=0$, whence $T_e(K)$ is isomorphic to the image of $\p_{e,K}$.  If $p>0$, then $K\cong \mathbb Z/p\mathbb Z$ as an abelian group, and so $\Tor^{\mathbb Z}_1(K,\coker \p_e)=\{a\in \coker\p_e\mid pa=0\}$.  Thus the surjective map $T_e(K)\to \ker \pi$ is injective if and only if $p$ does not divide the order of the torsion subgroup of $\coker \p_e$.  Moreover, if $p$ divides this order, then $\dim_K T_e(K)>\dim_K \ker \pi$.  Since $\ker \pi$ is the image of $\p_{e,K}$,  this completes the proof.
\end{proof}

\begin{Cor}\label{c:simple.field.ext}
Let $M$ be a finite monoid and $K$ a field of characteristic $p\geq 0$.  Suppose that $e\in E(M)$ such that $p\nmid |G_e|$ and  $V$ is a simple $KG_e$-module.  Then the simple $KM$-module associated to $V$ is a quotient of $T_e(K)\otimes_{KG_e} V$.  Moreover, they are isomorphic if  $p$ does not divide the order of the torsion subgroup of $\coker \p_e$.
\end{Cor}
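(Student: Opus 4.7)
The plan is to factor the map $\p_e$ through its image and track what happens after tensoring. Explicitly, $\p_e\colon \mathbb ZL_e\to \mathbb ZR_e^{\vee}$ factors as the surjection $\pi\colon \mathbb ZL_e\twoheadrightarrow T_e$ followed by the inclusion $\iota\colon T_e\hookrightarrow \mathbb ZR_e^{\vee}$. Both maps are $\mathbb ZM$-$\mathbb ZG_e$-bimodule homomorphisms (with the bimodule structure on $T_e$ inherited from $\mathbb ZR_e^{\vee}$). Applying the right exact functor $K\otimes_{\mathbb Z}(-)$ yields a factorization $\p_{e,K}=\iota_K\circ \pi_K$ where $\pi_K\colon KL_e\twoheadrightarrow T_e(K)$ is surjective and $\iota_K\colon T_e(K)\to KR_e^{\vee}$ has image equal to the image of $\p_{e,K}$.

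Now I would tensor this factorization with $V$ over $KG_e$ on the right. Since $p\nmid |G_e|$, the algebra $KG_e$ is semisimple by Maschke's theorem, and hence every left $KG_e$-module is flat; in particular $V$ is flat. Thus tensoring with $V$ preserves the surjectivity of $\pi_K$, yielding a surjection $\Ind_{G_e}(V)=KL_e\otimes_{KG_e}V\twoheadrightarrow T_e(K)\otimes_{KG_e}V$. Composing with $\iota_K\otimes 1_V\colon T_e(K)\otimes_{KG_e}V\to KR_e^{\vee}\otimes_{KG_e}V\cong \Coind_{G_e}(V)$ (using Proposition~\ref{p:natural.iso.coind}) recovers $\p_{e,K}\otimes 1_V$, whose image is the simple $KM$-module associated to $V$ by the Clifford-Munn-Ponizovskii description recalled just before the statement. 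Therefore the image of $\iota_K\otimes 1_V$ is the simple $KM$-module, and since $\pi_K\otimes 1_V$ is surjective, this simple module is a quotient of $T_e(K)\otimes_{KG_e}V$, proving the first assertion.

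For the second assertion, I would invoke Lemma~\ref{l:kernel.injects}: when $p$ does not divide the order of the torsion subgroup of $\coker \p_e$, the map $\iota_K\colon T_e(K)\to KR_e^{\vee}$ is injective (equivalently, $T_e(K)$ is isomorphic to the image of $\p_{e,K}$). Flatness of $V$ over $KG_e$ then preserves this injectivity after tensoring, so $\iota_K\otimes 1_V$ is injective as well. Combined with the previous paragraph, $T_e(K)\otimes_{KG_e}V$ maps isomorphically onto the image of $\p_{e,K}\otimes 1_V$, which is the associated simple $KM$-module.

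The only potentially subtle point is the flatness hypothesis on $V$; this is precisely where the assumption $p\nmid |G_e|$ enters, and it is essential both for the surjectivity step in the first assertion (to ensure the quotient $T_e(K)\otimes_{KG_e}V$ is nonzero in the expected way) and for the preservation of injectivity in the second assertion. Everything else is formal manipulation of the factorization and bimodule structures already set up in the preceding discussion.
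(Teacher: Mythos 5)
Your proposal is correct and follows essentially the same route as the paper: both factor $\p_e$ through $T_e$, use Lemma~\ref{l:kernel.injects} to control the map $T_e(K)\to KR_e^{\vee}$, and use flatness of $V$ (from semisimplicity of $KG_e$) to preserve the relevant surjectivity/injectivity after tensoring. Your write-up just makes the factorization $\p_{e,K}=\iota_K\circ\pi_K$ explicit where the paper phrases the same facts as ``tensoring with $V$ preserves images.''
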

\begin{proof}
Lemma~\ref{l:kernel.injects} shows that the image of $\p_{e,K}$ is a quotient of $T_e(K)$ and they are isomorphic if and only if $p$ does not divide the order of the torsion subgroup of $\coker \p_e$.  Since $KG_e$ is semisimple, $V$ is projective, hence flat, and so tensoring with $V$ preserves images.  Thus $T_e(K)\otimes_{KG_e} V$ maps onto the image of $\p_{e,K}\otimes 1_V$ and they are isomorphic if  $p$ does not divide the order of the torsion subgroup of $\coker \p_e$.
\end{proof}

Let us recall (cf.~\cite[Theorem~9.4.11]{Webbbook} and its proof) that if $G$ is a finite group, $p>0$ is a prime not dividing $|G|$ and $(F,\mathcal O,k)$ is a splitting $p$-modular system, then, for each simple $kG$-module $S$, there is a projective indecomposable $\mathcal OG$-module $\til S$ with $k\otimes_{\mathcal O} \til S\cong S$ such that $F\otimes_{\mathcal O} \til S$ is simple and the correspondence $[S]\mapsto [F\otimes_{\mathcal O}\til S]$ is a bijection between isomorphism classes of simple $kG$-modules and simple $FG$-modules that is inverse to the decomposition map.  Hence the decomposition matrix is  a permutation matrix.

We now prove our first theorem about the non-strongly modular case.

\begin{Thm}\label{t:char.nonmodular}
Let $M$ be a finite monoid and $p>0$ a prime.  Let $(F,\mathcal O,k)$ be a splitting $p$-modular system for $M$.
\begin{enumerate}
  \item  Suppose that $p$ does not divide the order of any maximal subgroup of $M$. Then there is a bijection between the simple $kM$-modules and the simple $FM$-modules, $V\mapsto \ov V$, such that $\dim_k V\leq \dim_F \ov  V$.  Moreover, the decomposition matrix is unipotent upper triangular with respect to an appropriate ordering of the simple modules.
  \item If $p$ is non-strongly modular for $M$, then, for each simple $kM$-module $V$, there is an $\mathcal OM$-lattice $\til V$  such that $k\otimes_{\mathcal O}\til V\cong V$ and $F\otimes_{\mathcal O} \til V=\ov  V$ is simple.  Moreover, the decomposition matrix is a permutation matrix and the Cartan matrix $C(FM)$ is conjugate to the Cartan matrix $C(kM)$ by a permutation matrix. If $P$ is a projective indecomposable $\mathcal OM$-module with simple top $V$, then $F\otimes_{\mathcal O} P$ is a projective indecomposable $FM$-module with simple top $F\otimes_{\mathcal O} \til V$.
  \item If $p$ is strongly modular for $M$, then the decomposition matrix is not a permutation matrix.
\end{enumerate}
\end{Thm}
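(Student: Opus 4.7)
The central tool throughout will be Corollary~\ref{c:simple.field.ext}: when $p\nmid |G_e|$, the simple $KM$-module associated to a simple $KG_e$-module $U$ is a quotient of $T_e(K)\otimes_{KG_e} U$, and is isomorphic to it precisely when the characteristic of $K$ avoids the torsion of $\coker \p_e$ (Lemma~\ref{l:kernel.injects}). This equality holds automatically over fields of characteristic $0$, and over $k$ exactly when $p$ is non-strongly modular at $e$. Every part of the theorem will be deduced by comparing the $K=F$ and $K=k$ versions of this construction.

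For part (1), since $p\nmid |G_{e_i}|$ for all $i$, every generalized conjugacy class is $p$-regular, so $FM$ and $kM$ have the same number of simples and $D$ is unimodular by Corollary~\ref{c:good.char}. Classical Brauer theory gives, at each $G_{e_i}$, a dimension-preserving bijection $U\leftrightarrow\bar U$ between simple $FG_{e_i}$- and $kG_{e_i}$-modules via lifting $\bar U$ to an $\mathcal OG_{e_i}$-lattice $\til U$ and setting $U=F\otimes_{\mathcal O}\til U$; Clifford--Munn--Ponizovskii globalises this to an apex-preserving bijection $V\leftrightarrow \ov V$. The inequality $\dim_k V\leq \dim_F \ov V$ will follow because $\ov V\cong T_{e_i}(F)\otimes_{FG_{e_i}}U$ by the characteristic-$0$ case of Corollary~\ref{c:simple.field.ext}, while $V$ is merely a quotient of $T_{e_i}(k)\otimes_{kG_{e_i}}\bar U$, combined with $\dim_k T_{e_i}(k)=\rk_{\mathbb Z} T_{e_i}=\dim_F T_{e_i}(F)$ and $\dim_k \bar U=\dim_F U$. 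For unipotent upper triangularity I would order simples to refine the partial $\mathscr J$-order on apexes; Proposition~\ref{p:decomp.commutes} and the apex structure ($e'S=0$ whenever $Me'M\not\supseteq MeM$) then force $d([\ov V])$ to be $[V]$ plus classes of simples of strictly lower apex, with the diagonal $1$ coming from the identity group decomposition matrix.

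Part (2) upgrades part (1). The non-strongly modular hypothesis promotes the final inequality to equality over $k$, so I would define $\til V = T_e(\mathcal O)\otimes_{\mathcal OG_e}\til U$, where $\til U$ is the classical Brauer lift of $\bar U$. Since $T_e(\mathcal O)$ is $\mathcal O$-free (as the image of $\p_e$ is free abelian) and $\til U$ is $\mathcal O$-free and $\mathcal OG_e$-projective, $\til V$ is an $\mathcal OM$-lattice. Base change via Lemma~\ref{l:commute.with.ext} yields $k\otimes_{\mathcal O}\til V\cong T_e(k)\otimes_{kG_e}\bar U\cong V$ and $F\otimes_{\mathcal O}\til V\cong T_e(F)\otimes_{FG_e}U\cong \ov V$, both isomorphisms using the equality case of Corollary~\ref{c:simple.field.ext}. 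Hence $d([\ov V])=[V]$, so $D$ is a permutation; Theorem~\ref{t:CartanSquare} then gives $C(kM)=D^TC(FM)D$, i.e.\ conjugation by a permutation matrix. For the projective statement, given a projective indecomposable $P$ over $\mathcal OM$ with top $V$, Theorem~\ref{t:brauer.square} gives $[F\otimes_{\mathcal O}P]=e([P/\mathfrak mP])$, and since the matrix of $e$ is $D$ (still a permutation), $F\otimes_{\mathcal O}P$ is a projective indecomposable $FM$-module; identification of its top with $F\otimes_{\mathcal O}\til V$ follows from the commutativity $dCe=c$ together with the bijection already constructed.

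For part (3), I would argue by contradiction in two cases. If $p$ divides some $|G_e|$, classical Brauer theory gives strictly fewer simple $kG_e$-modules than simple $FG_e$-modules, so summing over regular $\mathscr J$-classes shows $D$ is not even square. Otherwise $p\nmid |G_e|$ for every idempotent $e$, but $p$ divides the torsion of $\coker \p_{e_0}$ for some $e_0$. Assuming $D$ is a permutation, it induces a bijection $\sigma$ with $\dim_F W=\dim_k \sigma(W)$. Proposition~\ref{p:decomp.commutes} gives $\Res_{G_e}\sigma(W)=d(\Res_{G_e}W)$, and since the group decomposition map is an isomorphism preserving simples when $p\nmid |G_e|$, nonvanishing of $e\cdot(-)$ is preserved; this forces $\sigma$ to preserve apex. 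Restricted to apex $e$, $\sigma$ must then match the classical Brauer bijection at $G_e$, which is dimension-preserving on group simples. Summing via $KG_e\cong\bigoplus U^{\dim U}$ and Corollary~\ref{c:simple.field.ext} yields $\dim_K\operatorname{image}(\p_{e_0,K})=\sum(\dim V_{\bar U})(\dim \bar U)$ for $K=k$, and analogously for $K=F$; apex and dimension preservation would force these to agree, but Lemma~\ref{l:kernel.injects} says the $k$-side is strictly less than $\rk T_{e_0}=\dim_F\operatorname{image}(\p_{e_0,F})$, a contradiction. The main obstacle will be the apex preservation in part (3), which depends on extracting from Proposition~\ref{p:decomp.commutes} a clean triangular compatibility with the $\mathscr J$-order; a similar but easier issue, the diagonal-$1$ claim in part (1), also reduces to careful bookkeeping with the classical group decomposition.
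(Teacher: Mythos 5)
Your proposal is correct and follows essentially the same route as the paper: parts (1) and (2) are built, exactly as in the text, on Lemma~\ref{l:kernel.injects}/Corollary~\ref{c:simple.field.ext} and the lattice $\til V=T_e(\mathcal O)\otimes_{\mathcal OG_e}\til S$, with triangularity coming from the apexes of the extra composition factors of $T_e(k)\otimes_{kG_e}S$. The only divergence is in part (3), where you argue by contradiction and must first establish that the permutation preserves apexes and dimensions; the paper reaches the same conclusion more directly by writing $kG_e=\bigoplus_i S_i$ and noting that $\mathrm{im}\,\p_{e,k}\ncong T_e(k)$ forces some $\mathrm{im}(\p_{e,k})\otimes_{kG_e}S_i$ to be a \emph{proper} quotient of $T_e(k)\otimes_{kG_e}S_i$, so that $d([\ov V_{S_i}])=[V_{S_i}]+[W]$ with $W\neq 0$ and the corresponding column of $D$ already fails to be a permutation column --- this sidesteps the apex-preservation step you flag as the main obstacle (which is nonetheless salvageable via Proposition~\ref{p:decomp.commutes}).
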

\begin{proof}
The assumptions of (1) imply  that $FM$ and $kM$ both have the same number of isomorphism classes of simple modules, namely the number of generalized conjugacy classes of $M$.
Let $e\in E(M)$. Note that $(F,\mathcal O,k)$ is a splitting $p$-modular system for $G_e$ by Theorem~\ref{t:Brauer+CMP}.
 Suppose now that $S$ is a simple $kG_e$-module.
Since $p$ does not divide $|G_e|$, by the above discussion, there is a projective indecomposable $\mathcal OG_{e}$-module $\til S$ with $F\otimes_{\mathcal O} \til S$ simple and $k\otimes_{\mathcal O} \til S\cong S$.
Then $T_e(F)\otimes_{FG_e} (F\otimes_{\mathcal O} \til S)$ is the simple module associated to $F\otimes_{\mathcal O}\til S$ by Corollary~\ref{c:simple.field.ext}, since $F$ has characteristic $0$. If $V_S$ is the simple $kM$-module corresponding to $S$, we put $\ov V_S=T_e(F)\otimes_{FG_e} (F\otimes_{\mathcal O} \til S)$.   Note that since $\til S$ is a direct summand in $\mathcal OG_e$, we have that $\til V_S=T_e(\mathcal O)\otimes_{\mathcal OG_e} \til S$ is a direct summand in $T_e(\mathcal O)\cong T_e(\mathcal O)\otimes_{\mathcal OG_e}\mathcal OG_e$, and hence an $\mathcal OM$-lattice.
Moreover, $\ov V_S\cong F\otimes_{\mathcal O} \til V_S$ and $k\otimes_{\mathcal O} \til V_S\cong  T_e(k)\otimes_{kG_e} (k\otimes_{\mathcal O}\til S)\cong T_e(k)\otimes_{kG_e} S$ by Lemma~\ref{l:commute.with.ext}.
By Corollary~\ref{c:simple.field.ext}, we have that $V_S$ is a quotient of $T_e(k)\otimes_{kG_e} S$ and so $\dim_F \ov V_S\geq \dim_k V_S$.  Moreover, $d([\ov V_S]) = [V_S]+[W]$ where $W$ is the kernel of the projection $T_e(k)\otimes_{kG_e} S\to V_S$.   Note that $T_e(k)\otimes_{kG_e} S$ is a quotient of $\Ind_{G_e}(S)=kL_e\otimes_{kG_e} S$ since the tensor product is right exact.  It is known~\cite[Theorem~5.5]{repbook} that all composition factors of $\Ind_{G_e}(S)$ other than $V_S$ have apex $f$ with $MeM<MfM$.  Thus $[W]$ is a sum of simple modules with apexes generating strictly larger principal ideals than $MeM$.
Since $FM$ and $kM$ have the same number of simple modules, we see that $V_S\mapsto \ov V_S$ is a bijection between the simple modules, and $D^T$ is unipotent lower triangular with respect to an ordering of the simple modules compatible with the natural ordering on their apexes (improving on Corollary~\ref{c:good.char}, which just had $D^T$ unimodular).  This proves (1).

Assume now that $p$ is non-strongly modular (and hence divides the order of no maximal subgroup) and let $S$ be a simple $kG_e$-module. Then Corollary~\ref{c:simple.field.ext} shows that $V_S\cong T_e(k)\otimes_{kG_e} S$, retaining the above notation.   Thus $\til V_S=T_e(\mathcal O)\otimes_{\mathcal OG_e} \til S$ is an $\mathcal OM$-lattice lifting the simple module $V_S$, and $\ov V_S=F\otimes_{\mathcal O}\til V_S$.  Therefore, $d[\ov V_S]=[V_S]$ and $\dim_F \ov V_S=\dim_k V_S$.  We conclude that the decomposition matrix is a permutation matrix as the decomposition map sends the basis of simples for $G_0(FM)$ bijectively to the basis of simples for $G_0(kM)$.  Moreover, its transpose $e$ must send the dual basis for $K_0(kM)=K_0(\mathcal OM)$ of projective indecomposable $\mathcal OM$-modules to the dual basis for $K_0(FM)$ of projective indecomposable $FM$-modules.  Therefore, if $P$ is a projective indecomposable $\mathcal OM$-module with simple top $V_S$, then  $F\otimes_{\mathcal O} P$ is a projective  indecomposable $FM$-module.  Note that since $\til V_S/\mathfrak m\til V_S\cong V_S$ and $P$ is projective, we can lift the quotient map $P\to V_S$ to a homomorphism $\psi\colon P\to \til V_S$ with $\psi(P)+\mathfrak m\til V_S=\til V_S$.  Hence $\psi$ is surjective by Nakayama's lemma.  Therefore, we have a surjective homomorphism $F\otimes \psi\colon F\otimes_{\mathcal O}P\to F\otimes_{\mathcal O}\til V_S$, whence the projective indecomposable module $F\otimes_{\mathcal O} P$ has top  $\ov V_S$.  This establishes (2).

Suppose now that $p$ is strongly modular.  If $p\mid |G_e|$ for some idempotent $e$, then $kM$ has strictly fewer simple modules than $FM$ because not all generalized conjugacy classes are $p$-regular.  Thus the decomposition matrix is not a permutation matrix.  So suppose that $p$ divides the order of no maximal subgroup of $M$, but $p$ divides the order of the torsion subgroup of $\coker \p_e$.  Then $T_e(k)$ is not isomorphic to the image $U$ of $\p_{e,k}$ by Lemma~\ref{l:kernel.injects}.  Since $kG_e$ is semisimple, we have that $kG_e=S_1\oplus\cdots\oplus S_r$ with $S_1,\ldots, S_r$ simple.  Since $S_1,\ldots, S_r$ are projective, the image of $\p_{e,k}\otimes 1_{S_i}$ is $U\otimes_{kG_e} S_i$.  If $U\otimes_{kG_e}S_i\cong T_e(k)\otimes_{kG_e} S_i$ for  all $i=1,\ldots, r$, then we would get $U\cong U\otimes_{KG_e}\bigoplus_{i=1}^rS_i\cong \bigoplus_{i=1}^r (U\otimes_{kG_e} S_i)\cong \bigoplus_{i=1}^r (T_e(k)\otimes_{kG_e} S_i)\cong T_e(k)\otimes_{kG_e} \bigoplus_{i=1}^rS_i\cong T_e(k)$, a contradiction.  So we can find $i$ with $U\otimes_{kG_e} S_i\ncong T_e(k)\otimes_{kG_e} S_i$.  By Corollary~\ref{c:simple.field.ext}, we have that $U\otimes_{kG_e} S_i$ is a proper quotient of  $T_e(k)\otimes_{kG_e} S_i$.  Thus (retaining the notation of the first paragraph of the proof), we have $d([\ov V_{S_i}]) =[V_{S_i}]+[W]$ with $W\neq 0$, whence the decomposition matrix is not a permutation matrix.   This completes the proof.
\end{proof}

 Let $J=\rad(\mathbb QM)\cap \mathbb ZM$.  Note that $J$ is a nilpotent ideal of $\mathbb ZM$ and $\mathbb ZM/J$ is a ring embedding in $\mathbb QM/\rad(\mathbb QM)$, and hence  its additive group is finitely generated free abelian.   Also note that $\rad(\mathbb QM)=\mathbb Q\otimes_{\mathbb Z} J$ since, for any $a\in \rad(\mathbb QM)$, there is $n>0$ with $na\in J$.  Therefore, if $K$ is a field of characteristic zero, then $\rad(KM) = K\otimes_{\mathbb Z} J$ by transitivity of extension of scalars and Proposition~\ref{p:radical.extension}.

One can compute a basis for $J$ in the following way.  If $m\in M$, let $\mathrm{Fix}_L(m) =\{m_0\in M\mid mm_0=m_0\}$.  Notice that if $\theta$ is the character of the regular $\mathbb QM$-module $\mathbb QM$, then $\theta(m) =|\mathrm{Fix}_L(m)|$.    Let $M=\{m_1,\ldots, m_n\}$ and define a non-negative integer $n\times n$-matrix $L$ by $L_{ij}=|\mathrm{Fix}_L(m_im_j)|$.  Then $L$ is symmetric and is the matrix of the trace form $(a,b)\mapsto \theta(ab)$ with respect to the basis $M$.  Since we are in characteristic zero, $\rad(\mathbb QM)$ is the radical of trace form and hence can be identified with $\ker L$.  See~\cite{Drazin} for details, where this was first observed.  Since $L$ is an integer matrix, we can compute a basis of $J=\ker L\cap \mathbb ZM$ via standard methods.

\begin{Prop}\label{p:modular.rad}
Let $M$ be a monoid and $J=\rad(\mathbb QM)\cap \mathbb ZM$.  If $p$ is a non-strongly modular prime, then for any field $K$ of characteristic $p$, one has that the natural map $K\otimes_{\mathbb Z} J\to KM$ is injective with image $\rad(KM)$, that is, one can identify $\rad(KM)$ with $K\otimes_{\mathbb Z} J$. Moreover, $KM/\rad(KM)\cong K\otimes_{\mathbb Z}\mathbb ZM/J$ as rings and $\rad(KM)/\rad^2(KM)\cong K\otimes_{\mathbb Z} J/J^2$ as $KM$-bimodules.
\end{Prop}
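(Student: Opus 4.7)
The plan is to identify $K\otimes_{\mathbb Z} J$ with a nilpotent ideal of $KM$ lying in $\rad(KM)$, prove equality by a dimension count powered by Theorem~\ref{t:char.nonmodular}(2), and derive the quotient and graded-piece statements as formal consequences. For the first step, since $\mathbb ZM/J$ embeds in the semisimple algebra $\mathbb QM/\rad(\mathbb QM)$ it is torsion-free, hence a free abelian group, so the short exact sequence $0\to J\to \mathbb ZM\to \mathbb ZM/J\to 0$ splits over $\mathbb Z$. Tensoring with $K$ yields an injection $K\otimes_{\mathbb Z} J\hookrightarrow KM$, whose image is a two-sided ideal (since $J$ is an ideal of $\mathbb ZM$) and is nilpotent (since $J\subseteq \rad(\mathbb QM)$ is), so it is contained in $\rad(KM)$.

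Next I would establish equality by comparing $K$-dimensions. By Proposition~\ref{p:radical.extension}, $\dim_K\rad(KM)$ depends only on the characteristic of $K$. Fix a splitting $p$-modular system $(F,\mathcal O,k)$: Theorem~\ref{t:char.nonmodular}(2) provides, under the non-strongly modular hypothesis, a bijection $V\mapsto \overline V$ between simple $kM$-modules and simple $FM$-modules, and since $V\cong k\otimes_{\mathcal O}\widetilde V$ and $\overline V=F\otimes_{\mathcal O}\widetilde V$ for a common $\mathcal OM$-lattice $\widetilde V$, we have $\dim_k V=\rk_{\mathcal O}\widetilde V=\dim_F\overline V$. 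As $F$ and $k$ are splitting fields, $\dim KM/\rad(KM)=\sum(\dim S)^2$ in each case, so $\dim_F \rad(FM)=\dim_k\rad(kM)$. Therefore $\dim_K\rad(KM)=\dim_{\mathbb Q}\rad(\mathbb QM)=\rk_{\mathbb Z} J=\dim_K(K\otimes_{\mathbb Z} J)$, and since the latter sits inside $\rad(KM)$ with matching dimension, equality holds.

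Once $\rad(KM)=K\otimes_{\mathbb Z} J$ is established, the ring isomorphism $KM/\rad(KM)\cong K\otimes_{\mathbb Z}(\mathbb ZM/J)$ follows at once from applying the right-exact functor $K\otimes_{\mathbb Z}-$ to $0\to J\to \mathbb ZM\to \mathbb ZM/J\to 0$ (the resulting quotient map is multiplicative because the tensor product of rings is the usual one). For the bimodule statement, I would observe that $\rad^2(KM)$ is the image in $KM$ of the natural map $K\otimes_{\mathbb Z}J^2\to KM$: on one hand $(a\otimes x)(b\otimes y)=ab\otimes xy$ with $xy\in J^2$, and on the other hand $1\otimes xy=(1\otimes x)(1\otimes y)\in \rad^2(KM)$. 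Applying $K\otimes_{\mathbb Z}-$ to $0\to J^2\to J\to J/J^2\to 0$ then identifies $\rad(KM)/\rad^2(KM)$ with $K\otimes_{\mathbb Z}(J/J^2)$ as $KM$-bimodules. The main obstacle is the dimension comparison: one must recognize that Theorem~\ref{t:char.nonmodular}(2) gives a dimension-preserving bijection of simple modules between characteristic $0$ and characteristic $p$, and that Proposition~\ref{p:radical.extension} lets one transport the resulting equality of radical dimensions to arbitrary fields of a given characteristic. Everything else reduces to tensor-algebraic bookkeeping.
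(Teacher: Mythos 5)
Your proposal is correct and follows essentially the same route as the paper: realize $K\otimes_{\mathbb Z}J$ as a nilpotent ideal inside $\rad(KM)$, then force equality by the dimension count that Theorem~\ref{t:char.nonmodular}(2) (dimension-preserving bijection of simples over splitting fields) and Proposition~\ref{p:radical.extension} make possible, with the quotient and $J/J^2$ statements following by right-exactness of the tensor product. The only cosmetic differences are that the paper routes the dimension argument explicitly through $\mathbb F_p$ and $k$ and derives the bimodule isomorphism via $(KM/\rad(KM))\otimes_{KM}\rad(KM)$ rather than your cokernel identification; both variants are sound.
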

\begin{proof}
The first statement was already observed for the case that $K$ is of characteristic $0$.  So assume that $p>0$ is non-strongly modular.   Let $(F,\mathcal O,k)$ be a splitting $p$-modular system for $M$.  Then since $F\otimes_{\mathbb Z}J=\rad(FM)$, it follows that $FM/\rad(FM)\cong F\otimes_{\mathbb Z} \mathbb ZM/J$.  Since $\mathbb ZM/J$ is free abelian, $\Tor^{\mathbb Z}_1(\mathbb F_p,\mathbb ZM/J)=0$, and so $0\to \mathbb F_p\otimes_{\mathbb Z} J\to \mathbb F_pM\to \mathbb F_p\otimes_{\mathbb Z}\mathbb ZM/J\to 0$ is an exact sequence of $\mathbb F_p$-vector spaces.  Note that the image of $\mathbb F_p\otimes_{\mathbb Z} J$ is a nilpotent ideal and hence contained in $\rad(\mathbb F_pM)$.

   Tensoring with $k$, and using transitivity of extension of scalars, we then obtain an exact sequence $0\to k\otimes_{\mathbb Z} J\to kM\to k\otimes_{\mathbb Z} \mathbb ZM/J\to 0$.  The image of $k\otimes_{\mathbb Z}J$ in $kM$ is a nilpotent ideal, and hence contained in the radical.   Note that $\dim_k kM/\rad(kM)=\dim_F FM/\rad(FM)$ by Theorem~\ref{t:char.nonmodular} since both are split semisimple algebras having the same number of simples with the same dimensions.  Thus $\dim_k \rad(kM) =|M|-\dim_k kM/\rad(kM) = |M|- \dim_F FM/\rad(FM) = \dim_F \rad(FM)= \dim_F F\otimes_{\mathbb Z}J = \dim_k k\otimes_{\mathbb Z} J$.  It follows that the image of $k\otimes_{\mathbb Z}J$ in $kM$ under the natural map is $\rad(kM)$.  It then follows from Proposition~\ref{p:radical.extension} that the image of $\mathbb F_p\otimes_{\mathbb Z}J$ in $\mathbb F_pM$ is $\rad(\mathbb F_pM)$ by dimension considerations, and hence the same is true for every field $K$ of characteristic $p$ by another application of Proposition~\ref{p:radical.extension}.  Since tensoring with $K$ over $\mathbb F_p$ is exact, it also follows that $KM/\rad(KM)\cong K\otimes_{\mathbb Z} \mathbb ZM/J$.

    Finally, note $\rad(KM)/\rad^2(KM)\cong (KM/\rad(KM))\otimes_{KM} \rad(KM)\cong K\otimes_{\mathbb Z} (\mathbb ZM/J\otimes_{\mathbb ZM} J)\cong K\otimes_{\mathbb Z}J/J^2$ by Lemma~\ref{l:commute.with.ext}.
\end{proof}

Recall that the \emph{Loewy length} of a finite dimensional algebra $A$ is the least $n$ such that $\rad(A)^n=0$.  Let $n$ be the Loewy length of $\mathbb QM$ (which equals the Loewy length of $\mathbb CM$ by Proposition~\ref{p:radical.extension}).
Let us define a prime $p$ to be \emph{modular} for $M$ if either it is strongly modular, or it divides the order of the torsion subgroup of $J^k/J^{k+1}$ for some $1\leq k< n$ (where $J=\rad(\mathbb QM)\cap \mathbb ZM$).  Note that there are only finitely many primes that are modular for $M$.    I do not know any example of a finite monoid with a modular prime that is not strongly modular.  So far, all monoids that I have been able to construct for which $J/J^2$ has $p$-torsion also have a maximal subgroup of order divisible by $p$.

We show that when $p$ is nonmodular for $M$, there is a quiver $Q$ and an ideal $I$ of $\mathcal OQ$ such that $F\otimes_{\mathcal O}I$ is an admissible ideal with $FQ/(F\otimes_{\mathcal O} I)$ isomorphic to the basic algebra of $FM$ and  $k\otimes_{\mathcal O} I$ is an admissible ideal of $kQ$ with $kQ/(k\otimes_{\mathcal O} I)$ isomorphic to the basic algebra of $kM$.  This shows that, in a sense, the characteristic $p$ representation theory is ``the same'' as the characteristic $0$ representation theory when $p$ is nonmodular.  First we show that the Loewy length and series looks the same in characteristic $0$ and nonmodular characteristic $p$.

\begin{Prop}\label{p:loewy.preserved}
Let $M$ be a finite monoid and $p\geq 0$ a nonmodular prime for $M$.  Let $n$ be the Loewy length of $\mathbb QM$ and let $J=\rad(\mathbb QM)\cap \mathbb ZM$.  Let $K$ be a field of characteristic $p$.  Then $KM$ has Loewy length $n$,  the natural map $K\otimes_{\mathbb Z}J^k\to KM$ is injective with image $\rad^k(KM)$ for all $1\leq k<n$ and $\rad^k(KM)/\rad^{k+1}(KM)\cong K\otimes_{\mathbb Z}J^k/J^{k+1}$ for all $0\leq k\leq n-1$.
\end{Prop}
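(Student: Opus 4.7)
I would prove this by induction on $k$, leveraging Proposition~\ref{p:modular.rad} as the base case and using the nonmodular hypothesis exactly once per step to kill a relevant $\Tor$.

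The base case $k=1$ is precisely Proposition~\ref{p:modular.rad}. For the inductive step, assume $K \otimes_{\mathbb Z} J^k \to KM$ is injective with image $\rad^k(KM)$. Apply the functor $K \otimes_{\mathbb Z}-$ to the short exact sequence of abelian groups
\[
0\to J^{k+1}\to J^k\to J^k/J^{k+1}\to 0
\]
to obtain
\[
\Tor_1^{\mathbb Z}(K,J^k/J^{k+1})\to K\otimes_{\mathbb Z} J^{k+1}\to K\otimes_{\mathbb Z} J^k\to K\otimes_{\mathbb Z}(J^k/J^{k+1})\to 0.
\]
When $p=0$ the $\Tor$ term vanishes automatically; when $p>0$ is nonmodular, the hypothesis that $p$ does not divide $|(J^k/J^{k+1})_{\mathrm{tors}}|$ gives $(J^k/J^{k+1})[p]=0$, and flatness of $K$ over $\mathbb F_p$ forces $\Tor_1^{\mathbb Z}(K,J^k/J^{k+1})=0$. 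Composing the resulting injection $K\otimes_{\mathbb Z}J^{k+1}\hookrightarrow K\otimes_{\mathbb Z}J^k$ with the inductive hypothesis yields the desired injectivity into $KM$.

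To identify the image with $\rad^{k+1}(KM)$, note that the image of $K\otimes_{\mathbb Z}J^{k+1}$ is clearly contained in $\rad^{k+1}(KM)$ since $J^{k+1}$ is spanned by products of $k+1$ elements of $J\subseteq \rad(KM)$. For the reverse inclusion, $\rad^{k+1}(KM)=\rad(KM)\cdot \rad^k(KM)$, and by the base case together with the inductive hypothesis every element of $\rad(KM)$ (resp.\ $\rad^k(KM)$) is hit by $K\otimes_{\mathbb Z} J$ (resp.\ $K\otimes_{\mathbb Z} J^k$); the multiplication surjection $J\otimes_{\mathbb Z}J^k\twoheadrightarrow J^{k+1}$ then shows $\rad^{k+1}(KM)$ lies in the image. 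The quotient isomorphism $\rad^k(KM)/\rad^{k+1}(KM)\cong K\otimes_{\mathbb Z}(J^k/J^{k+1})$ is then immediate from the four-term exact sequence above.

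Finally, for the Loewy length: since $\mathbb ZM$ embeds in $\mathbb QM$ and $\rad(\mathbb QM)^n=\mathbb Q\otimes_{\mathbb Z}J^n=0$, we have $J^n=0$ inside $\mathbb ZM$, so $\rad^n(KM)=0$ by the above identification and Loewy length is at most $n$. Conversely, $J^{n-1}$ has positive $\mathbb Z$-rank because its rationalization is $\rad(\mathbb QM)^{n-1}\ne 0$; hence $K\otimes_{\mathbb Z}J^{n-1}\ne 0$ and $\rad^{n-1}(KM)\ne 0$. The main subtlety, I expect, is purely bookkeeping: verifying that the nonmodular hypothesis is invoked at every grade so that the tower of $\Tor$-vanishings cascades through the induction, and that the image identification meshes cleanly with the multiplicative structure on $\bigoplus_k J^k/J^{k+1}$.
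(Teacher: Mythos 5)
Your proof is correct and follows essentially the same route as the paper's: induction on $k$ with Proposition~\ref{p:modular.rad} as the base case, killing $\Tor_1^{\mathbb Z}(K,J^k/J^{k+1})$ via the nonmodular hypothesis, and identifying the image of $K\otimes_{\mathbb Z}J^{k+1}$ with $\rad(KM)\rad^k(KM)$ through $J^{k+1}=JJ^k$. The only cosmetic difference is that the paper first tensors with the prime field and then extends scalars to $K$, whereas you tensor with $K$ directly and invoke flatness of $K$ over $\mathbb F_p$; these are interchangeable.
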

\begin{proof}
We prove the result by induction on $k$.
Proposition~\ref{p:modular.rad} establishes the result for $k=1$. In particular, $\rad(KM)=K\otimes_{\mathbb Z}J$, and so the Loewy length of $KM$ is at most $n$.   Assume the result is true for $k<n-1$.   Let $F\subseteq K$ be the prime field.   First note that since $\Tor_1^{\mathbb Z}(F,J^{k}/J^{k+1})=0$ by definition of a nonmodular prime, tensoring  the exact sequence $0\to J^{k+1}\to J^{k}\to J^{k}/J^{k+1}\to 0$ with $F$ leads to an exact sequence $0\to F\otimes_{\mathbb Z}J^{k+1}\to F\otimes_{\mathbb Z}J^k\to F\otimes_{\mathbb Z}J^k/J^{k+1}\to 0$, which we can then tensor over $F$ with $K$ to obtain an exact sequence $0\to K\otimes_{\mathbb Z}J^{k+1}\to K\otimes_{\mathbb Z}J^k\to \mathbb K\otimes_{\mathbb Z}J^k/J^{k+1}\to 0$.  By induction the natural homomorphism $K\otimes_{\mathbb Z} J^k\to KM$ is injective with image $\rad^k(KM)$.  Therefore, the composition $K\otimes_{\mathbb Z}J^{k+1}\to K\otimes_{\mathbb Z} J^k\to KM$ is injective, and it is the natural map.  The image is the $K$-span of $J^{k+1}=JJ^k$.  From the inductive hypothesis this is $\rad(KM)\rad^k(KM) =\rad^{k+1}(KM)$.   Moreover, $K\otimes_{\mathbb Z}J^{k}/J^{k+1}\cong (K\otimes_{\mathbb Z}J^k)/(K\otimes_{\mathbb Z}J^{k+1})\cong  \rad^{k}(KM)/\rad^{k+1}(KM)$.  This completes the induction.  In particular, it follows that $\rad(KM)^{n-1}\neq 0$, whence $KM$ has Loewy length $n$.
\end{proof}

The following theorem concerns lifting quiver presentations from nonmodular characteristic $p>0$ to characteristic $0$.  The astute reader will note that the proof only uses that $p$ is non-strongly modular and that $J/J^2$ has no $p$-torsion, which potentially is weaker than nonmodular.

\begin{Thm}\label{t:quiver.pres}
Let $M$ be a finite monoid and $p>0$ a prime which is nonmodular for $M$.   Let $(F,\mathcal O,k)$ be a splitting $p$-modular system for $M$. Let $e$ be a basic idempotent for $kM$.  Then there is an idempotent $\til e\in \mathcal OM$ lifting $e$ such that $\til e$ is a basic idempotent of $FM$.  Moreover, there is a quiver $Q$ and an ideal $I$ of $\mathcal OQ$ such that
$F\otimes_{\mathcal O} I$ is an admissible ideal of $FQ$ with $FQ/(F\otimes_{\mathcal O} I)\cong \til eFM\til e$  and $k\otimes_{\mathcal O} I$ is an admissible ideal of $kQ$ with $kQ/(k\otimes_{\mathcal O} I)\cong ekMe$.
\end{Thm}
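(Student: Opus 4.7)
The plan is to lift a primitive decomposition of the basic idempotent $e$ to $\mathcal OM$, verify via Theorem~\ref{t:char.nonmodular}(2) that the lift remains basic in $FM$, and then build a single homomorphism $\phi\colon \mathcal OQ\to \til e\mathcal OM\til e$ whose reduction modulo $\mathfrak m$ and extension to $F$ simultaneously realize admissible quiver presentations of the two basic algebras.

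First I would decompose $e=e_1+\cdots+e_s$ into orthogonal primitive idempotents of $kM$. Since $\mathcal O$ is complete, $\mathcal OM$ is semiperfect and this decomposition lifts to $\til e=\til e_1+\cdots+\til e_s$ with the $\til e_i$ orthogonal primitive idempotents of $\mathcal OM$. By Theorem~\ref{t:char.nonmodular}(2), each projective indecomposable $\mathcal OM\til e_i$ extends to a projective indecomposable $FM\til e_i$ with simple top $\ov V_{S_i}$, the image under the bijection $V\mapsto \ov V$ of the simple top $V_{S_i}$ of $kMe_i$. Basicness of $e$ in $kM$ forces the $V_{S_i}$ to exhaust the simple $kM$-modules, and non-strong modularity of $p$ makes $V\mapsto \ov V$ a bijection, so the $\ov V_{S_i}$ exhaust the simple $FM$-modules. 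Hence $FM\til e$ is a progenerator and the $\til e_i$ are pairwise non-conjugate in $FM$, so $\til e$ is a basic idempotent of $FM$.

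Next I would construct the common quiver $Q$ together with the lift $\phi$. Let $J=\rad(\mathbb QM)\cap\mathbb ZM$ and $\til J=\mathcal O\cdot J\subseteq \mathcal OM$; flatness of $\mathcal O$ over $\mathbb Z$ yields $\til J^{\,k}\cong \mathcal O\otimes_{\mathbb Z}J^k$. Proposition~\ref{p:modular.rad} identifies $k\otimes_{\mathcal O}\til J$ with $\rad(kM)$ and $F\otimes_{\mathcal O}\til J$ with $\rad(FM)$, while nonmodularity of $p$ (equivalently, $p$-torsion-freeness of $J/J^2$) makes $\til J/\til J^{\,2}\cong \mathcal O\otimes_{\mathbb Z}(J/J^2)$ a free $\mathcal O$-module whose two base changes recover $\rad/\rad^2$ of $FM$ and of $kM$. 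Consequently, each corner $\til e_j(\til J/\til J^{\,2})\til e_i$ is a direct $\mathcal O$-summand, hence free of some rank $n_{ij}$, and this single rank simultaneously computes the number of arrows $i\to j$ in the Ext-quivers of $\til eFM\til e$ and of $ekMe$. I take $Q$ to be this common quiver on vertex set $\{1,\ldots,s\}$; for each arrow $\alpha\colon i\to j$, I pick $x_\alpha\in \til e_j\til J\til e_i$ whose images in $\til e_j(\til J/\til J^{\,2})\til e_i$ form an $\mathcal O$-basis, and define $\phi\colon \mathcal OQ\to \til R:=\til e\mathcal OM\til e$ by $\varepsilon_i\mapsto \til e_i$ and $\alpha\mapsto x_\alpha$, extended multiplicatively.

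To conclude, I set $I=\ker\phi$. The order $\til R$ is a direct $\mathcal O$-summand of the free $\mathcal O$-module $\mathcal OM$, hence $\mathcal O$-flat; the reductions $e_i$ and $\ov{x_\alpha}$ generate $ekMe$ as a $k$-algebra (the $e_i$ split the semisimple quotient, the $\ov{x_\alpha}$ span $\rad/\rad^2$, and the radical is nilpotent), so $k\otimes\phi$ is surjective, and Nakayama over the local ring $\mathcal O$ upgrades this to surjectivity of $\phi$. The sequence $0\to I\to \mathcal OQ\to \til R\to 0$ then stays exact after $-\otimes_{\mathcal O}F$ (flatness of $F$) and after $-\otimes_{\mathcal O}k$ (using $\Tor^{\mathcal O}_1(k,\til R)=0$), giving $FQ/(F\otimes I)\cong \til eFM\til e$ and $kQ/(k\otimes I)\cong ekMe$. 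Writing $\mathfrak J_K$ for the arrow ideal of $KQ$, admissibility $K\otimes I\subseteq \mathfrak J_K^{\,2}$ for $K\in\{F,k\}$ follows by decomposing an element of the kernel along $KQ=KQ_0\oplus KQ_1\oplus \mathfrak J_K^{\,2}$ and using that the $\til e_i$ are linearly independent modulo $\rad$ and the $x_\alpha$ modulo $\rad^2$ in each fiber, while $\mathfrak J_K^{\,N}\subseteq K\otimes I$ for $N$ the Loewy length comes from nilpotency of the target radicals. The main obstacle, and the only place where nonmodularity is essential, lies in the preceding paragraph: producing a single $\mathcal O$-lattice $\til J/\til J^{\,2}$ whose rank matches both fiber dimensions of $\rad/\rad^2$ simultaneously, so that arrows chosen to resolve $\rad/\rad^2$ on one fiber automatically do so on the other after base change.
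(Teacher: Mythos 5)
Your proposal is correct and follows essentially the same route as the paper's proof: lift the orthogonal primitive decomposition using completeness and Theorem~\ref{t:char.nonmodular}(2), use freeness of the $\mathcal O$-lattice $\widetilde J/\widetilde J^2$ (the only point where nonmodularity enters) to choose arrow lifts whose images are simultaneously bases of $\rad/\rad^2$ in both fibers, and conclude via Nakayama and preservation of exactness under both base changes. The only cosmetic differences are that you lift the decomposition of $e$ rather than of $1$ and argue basicness of $\widetilde e$ in $FM$ directly from the bijection on simples, which is equally valid.
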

\begin{proof}
Note that $FM$ and $kM$ are both split algebras.  Let $e$ be a basic idempotent of $kM$.  That means there is a decomposition $1=\eta_1+\cdots+\eta_s$ of the identity of $kM$ into orthogonal primitive idempotents such that $\eta_1,\ldots, \eta_t$ are pairwise nonconjugate, representing all the conjugacy classes of primitive idempotents of $kM$, and $e=\eta_1+\cdots+\eta_t$.    Then since $\mathcal O$ is complete, we can lift this to a decomposition $1=\til\eta_1+\cdots+\til\eta_s$ of the identity of $\mathcal OM$ into orthogonal primitive idempotents.  We will still have $\til\eta_1,\ldots, \til \eta_t$ are pairwise nonconjugate and represent all the conjugacy classes of primitive idempotents of $\mathcal OM$ since $\mathcal OM$ is semiperfect, $\mathcal OM/\rad(\mathcal OM)\cong kM/\rad(kM)$ and $\mathcal OM\til\eta_i$ is the projective cover of $S_i=kM\eta_i/\rad(KM)\eta_i$ as an $\mathcal OM$-module. By Theorem~\ref{t:char.nonmodular}, each $FM\til\eta_i \cong F\otimes_{\mathcal O}\mathcal OM\til \eta_i$ is a projective indecomposable $FM$-module with simple top $F\otimes_{\mathcal O}\til S_i$, where $\til S_i$ is an $\mathcal OM$-lattice with $k\otimes_{\mathcal O}\til S_i\cong S_i$.  In particular, $F\otimes_{\mathcal O}\til S_i\cong F\otimes_{\mathcal O}\til S_j$ if and only if $S_i\cong S_j$.   It follows that $1=\til\eta_1+\cdots+\til\eta_s$ is a decomposition of the identity of $FM$ into orthogonal primitive idempotents and that $\til e=\til\eta_1+\cdots+\til \eta_t$ is a basic idempotent of $FM$.  Also note that $\dim_F \til eFM\til e=\dim_k ekMe$, as both are obtained from tensoring over $\mathcal O$ with the $\mathcal O$-order $\til e\mathcal OM\til e$ in $\til eFM\til e$.

The exact sequence of $\mathbb ZM$-bimodules
\begin{equation}\label{eq:rad2.seq}
0\to J^2\to J\to J/J^2\to 0
\end{equation}
 leads to an exact sequence $0\to \mathcal O\otimes_{\mathbb Z}J^2\to \mathcal O\otimes_{\mathbb Z} J\to \mathcal O\otimes_{\mathbb Z}J/J^2\to 0$ since $\mathcal O$ has characteristic $0$ and hence is flat over $\mathbb Z$.
 Also, using that $\mathcal O$ is flat over $\mathbb Z$, we have that $\mathcal O\otimes_{\mathbb Z}J$ embeds isomorphically as an $\mathcal OM$-bimodule in $\mathcal OM$ as the $\mathcal O$-module $\til J$ spanned by $J$ over $\mathcal O$, which is an ideal.  Composing the injective maps $\mathcal O\otimes_{\mathbb Z}J^2\to \mathcal O\otimes_{\mathbb Z}J$ and $\mathcal O\otimes_{\mathbb Z}J\to \mathcal OM$, we see that $\mathcal O\otimes_{\mathbb Z}J^2$ is isomorphic as an $\mathcal OM$-bimodule to that $\mathcal O$-span of $J^2$, which is precisely $\til J^2$.   Thus $\til J/\til J^2\cong (\mathcal O\otimes_{\mathbb Z} J)/(\mathcal O\otimes_{\mathbb Z} J^2)\cong \mathcal O\otimes_{\mathbb Z}J/J^2$ as a bimodule.

 We claim that $\Tor_1^{\mathcal O}(k,\til J/\til J^2)=0$.  Indeed, the natural map $k\otimes_{\mathbb Z}J^2\to kM$ is injective by Proposition~\ref{p:loewy.preserved} and factors through the natural map $k\otimes_{\mathbb Z}J^2\to k\otimes_{\mathbb Z}J$.  This yields
exactness of $0\to k\otimes_{\mathbb Z}J^2\to k\otimes_{\mathbb Z} J\to k\otimes_{\mathbb Z} (J/J^2)\to 0$.  Therefore,  $0\to k\otimes_{\mathcal O}\til J^2\to k\otimes_{\mathcal O} \til J\to k\otimes_{\mathcal O} \til J/\til J^2\to 0$ is exact by transitivity of extension of scalars (identifying $\til J^i$ with $\mathcal O\otimes_{\mathbb Z}J^i$ for $i=1,2$ and $\til J/\til J^2$ with $\mathcal O\otimes_{\mathbb Z} J/J^2$), whence $\Tor_1^{\mathcal O}(k,\til J/\til J^2)=0$ as required.  Since $\til J/\til J^2$ is finitely generated over $\mathcal O$, and $\mathcal O$ is a discrete valuation ring with residue field $k$, we deduce that $\til J/\til J^2$ is a free $\mathcal O$-module.  Note that $\rad(kM)/\rad^2(kM)\cong k\otimes_{\mathcal O}\til J/\til J^2$ and $\rad(FM)/\rad^2(FM)\cong  F\otimes_{\mathcal O} \til J/\til J^2$ as bimodules by Proposition~\ref{p:modular.rad} and transitivity of extension of scalars.

Now $\til\eta_i [\til J/\til J^2]\til \eta_j$ is a direct summand in $\til J/\til J^2$ and hence is a free $\mathcal O$-module.  Thus we have, for each $i,j$, an exact sequence of free $\mathcal O$-modules $0\to \til\eta_i\til J^2\til \eta_j\to \til \eta_i\til J\til \eta_j \to \til\eta_i [\til J/\til J^2]\til \eta_j\to 0$ which splits over $\mathcal O$.  Moreover, these tensor with $F$ and $k$ to give compatible splittings of \[0\to \til\eta_i\rad^2(FM)\til \eta_j\to \til \eta_i\rad(FM)\til \eta_j \to \til\eta_i [\rad(FM)/\rad^2(FM)]\til \eta_j\to 0\] over $F$ and of \[0\to \eta_i\rad^2(kM)\eta_j\to \eta_i\rad(kM)\eta_j \to \eta_i [\rad(kM)/\rad^2(kM)]\eta_j\to 0\] over $k$.  Thus we can find a lift $\til B_{ij}$ of an $\mathcal O$-basis for $\til\eta_i [\til J/\til J^2]\til \eta_j$ to $\til \eta_i\til J\til \eta_j$ that projects to a subset $B_{ij}$  of $\eta_i[\rad(kM)]\eta_j$ mapping to a basis of the $k$-vector space $\eta_i[\rad(kM)/\rad^2(kM)]\eta_j$ and also projects to an $F$-basis of $\til\eta_i [\rad(FM)/\rad^2(FM)]\til \eta_j$.

Let $Q$ be the quiver with vertex set $\{1,\ldots, t\}$ and with $|\til B_{ij}|$ arrows from $j$ to $i$.
  Define a homomorphism $\psi\colon \mathcal OQ\to \til e\mathcal OM\til e$ by sending the empty path at vertex $i$ to $\til\eta_i$ and choosing a bijection between the arrows from $j$ to $i$ and $\til B_{ij}$.  Let $I$ be the kernel of this homomorphism.  Then the composition of $\psi$ with the projection to $ekMe$ induces a homomorphism $\psi'\colon kQ\to ekMe$ that is surjective by quiver theory  with kernel an admissible ideal $I'$ (by the above properties of $B_{ij}$).  It follows that $\psi(\mathcal OQ)+\mathfrak m\til e\mathcal OM\til e=\til e\mathcal OM\til e$, and so $\psi$ is surjective by Nakayama's lemma (as $\til e\mathcal OM\til e$ is finitely generated an $\mathcal O$-module).  Thus $\til e\mathcal OM\til e\cong \mathcal OQ/I$ and since $0\to I\to \mathcal OQ\to \til e\mathcal OM\til e\to 0$ is an exact sequence of free $\mathcal O$-modules, we also have that $0\to k\otimes_{\mathcal O} I\to kQ\to ekMe\to 0$ is exact, and so $k\otimes_{\mathcal O}I$ is the admissible ideal $I'$.  Also, $F\otimes_{\mathcal O}I$ is the kernel of the homomorphism $FQ\to \til eFM\til e$ obtained by extending the scalars, which is surjective with admissible kernel by quiver theory by the above properties of $\til B_{ij}$.  This completes the proof.
\end{proof}

\section{Characters and Brauer characters}
Brauer characters for monoids were first considered, independently, in~\cite{guralnick} and~\cite{PutchaBrauer}.  The approach of the former reference defines the Brauer character on the entire monoid, and is really what should be called the ``Brauer lift'' of a Brauer character. The approach of~\cite{PutchaBrauer} is equivalent to ours, but formulated differently.

 Let $\mu_r(R)$ denote the group of $r^{th}$-roots of unity in a commutative ring $R$.
Suppose that $(F,\mathcal O,k)$ is a $p$-modular system for $M$ such that $F$ contains a primitive $n^{th}$-root of unity where $n$ is the least common multiple of the periods of the elements of $M$.  So, in particular, the $p$-modular system is splitting by Theorem~\ref{t:Brauer+CMP}. Write $n=n'p^v$ with $\gcd(n',p)=1$.  Then  notice $\mu_{n'}(F)=\mu_{n'}(\mathcal O)$ and the reduction map $\rho\colon \mathcal O\to k$ induces an isomorphism $\mu_{n'}(\mathcal O)\to \mu_{n'}(k)$.  If $\omega\in \mu_{n'}(k)$, we shall write $\wh \omega$ for its preimage in $\mu_{n'}(\mathcal O)$ under $\rho$.

The following observation goes back (independently) to McAlister~\cite{McAlisterCharacter} and Rhodes and Zalcstein~\cite{RhodesZalc}, at least in characteristic $0$.

\begin{Prop}\label{p:RZMC}
Let $K$ be a field of characteristic $p$ (possibly $p=0$) and $M$ a finite monoid.  Suppose that $K$ contains a primitive $n^{th}$-root of unity where $n$ is the least common multiple of the periods of the elements of $M$.    Let $\rho\colon M\to M_r(K)$ be a representation.  If $m\in M$  is $p$-regular, then $\rho(m)$ is similar to a block diagonal matrix \[\begin{bmatrix} N &0\\ 0 & D\end{bmatrix}\] where $N$ is an upper triangular matrix with zeroes on the diagonal and $D$ is a diagonal matrix with diagonal entries in $\mu_{n'}(K)$ where we put $n'=n$ if $p=0$, and otherwise factor $n=n'p^v$ with $\gcd(n',p)=1$.  The matrix $D$ is unique up to reordering the diagonal.
\end{Prop}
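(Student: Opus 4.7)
The plan is to exploit the fact that $m$ commutes with the idempotent $e=m^{\omega}$, since $me=em=m^{\omega+1}$. Consequently $\rho(e)$ is an idempotent matrix commuting with $\rho(m)$, and the decomposition
\[K^r=\mathrm{Im}\,\rho(e)\oplus\ker\rho(e)\]
is $\rho(m)$-invariant. Choosing bases adapted to it puts $\rho(m)$ in block-diagonal form; I will identify the two blocks with the $D$ and $N$ of the statement by analyzing them separately.

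On the $\ker\rho(e)$ block, my plan is to establish nilpotence directly: since $e=m^{N}$ for some $N>0$ (any multiple of the period of $m$ exceeding the index), one has $\rho(m)^N=\rho(e)$, which vanishes on $\ker\rho(e)$. This block is therefore nilpotent and hence similar to a strictly upper triangular matrix $N$. On the $\mathrm{Im}\,\rho(e)$ block, $\rho(m)$ acts as $\rho(eme)=\rho(m^{\omega+1})=\rho(g)$ with $g=m^{\omega+1}\in G_e$; the relevant point is that since $\rho(e)$ is the identity on its image, this equality really describes the action of $\rho(m)$ there. By $p$-regularity of $m$, the order $d$ of $g$ (which equals the period of $m$) is coprime to $p$ and divides $n'$. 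The hypothesis on $K$ supplies $d$ distinct roots of $X^d-1$ inside $\mu_{n'}(K)$, so $\rho(g)|_{\mathrm{Im}\,\rho(e)}$ is diagonalizable with eigenvalues in $\mu_{n'}(K)$, which furnishes the diagonal block $D$.

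For the uniqueness assertion, I would observe that the characteristic polynomial of $\rho(m)$ must factor as $x^{\dim N}\prod_i(x-d_i)$ with each $d_i\in\mu_{n'}(K)$ nonzero, so the multiset $\{d_i\}$ is determined by $\rho(m)$ as the multiset of nonzero roots (with algebraic multiplicity) of its characteristic polynomial; hence $D$ is unique up to a permutation of its diagonal. I do not anticipate any serious obstacle; the main conceptual step is identifying the decomposition of $K^r$ induced by $\rho(e)$, after which the nilpotent-versus-semisimple split is forced by $p$-regularity together with the hypothesis that $\mu_{n'}(K)$ is as large as possible.
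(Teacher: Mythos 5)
Your proof is correct, and it reaches the paper's normal form by a slightly different route. The paper argues directly with the minimal polynomial: $m^i=m^{i+s}$ (with $i$ the index and $s$ the period) forces the minimal polynomial of $\rho(m)$ to divide $x^i(x^s-1)$; since $p\nmid s$ and $s\mid n'$, the factor $x^s-1$ is separable with roots in $\mu_{n'}(K)$, so the Jordan form exists and has the stated shape, and uniqueness of $D$ is read off from uniqueness of the Jordan form. You instead split $K^r$ first using the idempotent $\rho(e)$ with $e=m^{\omega}$, and treat the nilpotent and invertible pieces separately. Since $\rho(e)=\rho(m)^N$ is a polynomial in $\rho(m)$, your decomposition $\mathrm{Im}\,\rho(e)\oplus\ker\rho(e)$ is exactly the Fitting decomposition that the coprime factorization $x^i(x^s-1)$ produces, so the two arguments are two derivations of the same splitting; in fact your viewpoint is the one the paper itself adopts immediately afterwards, in the proof of Proposition~\ref{p:Brauer.chars}, where $e=m^{\omega}$ is noted to be a central idempotent of $k\langle m\rangle$ with $m$ acting nilpotently on $(1-e)V$ and diagonalizably on $eV$. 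What your route buys is that the diagonalizable block is visibly governed by the group element $m^{\omega+1}$ of order $s\mid n'$, which is the conceptual content exploited throughout the Brauer character section; what the paper's route buys is brevity and an immediate uniqueness statement. Your uniqueness argument via the nonzero roots of the characteristic polynomial is also fine. Two cosmetic points: $e=m^r$ for any $r\geq i$ with $s\mid r$, so ``at least the index'' rather than ``exceeding'' is the right condition; and on $\mathrm{Im}\,\rho(e)$ the clean statement is that for $v=\rho(e)w$ one has $\rho(m)v=\rho(m)\rho(e)v=\rho(m^{\omega+1})v$, which is what justifies replacing $\rho(m)$ by the group element there.
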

\begin{proof}
Suppose that $m$ has index $i$ and period $s$.  Our assumption says that $p$ is not a divisor of $s$.  Then since $m^i=m^{i+s}$, it follows that the minimal polynomial $p(x)$ of $\rho(m)$ divides $x^i(x^s-1)$.  Since $s\mid n$ and $p\nmid s$, we conclude that $x^s-1$ has $s$ distinct roots in $K$, all of which belong to $\mu_{n'}(K)$.  It follows that the Jordan canonical form of $\rho(m)$ over $K$ exists and is of the desired form.  The uniqueness of $D$ follows from the uniqueness of the Jordan canonical form, as $D$ is the direct sum of the Jordan blocks corresponding to the nonzero eigenvalues of $\rho(m)$.
\end{proof}

Let us return to the situation of a $p$-modular system $(F,\mathcal O,k)$ for $M$ such that $F$ contains a primitive $n^{th}$-root of unity where $n$ is the least common multiple of the periods of the elements of $M$. Factor $n=n'p^v$ with $\gcd(n',p)=1$.    If $V$ is a finitely generated $kM$-module and $m\in M$ is $p$-regular, then Proposition~\ref{p:RZMC} shows that the portion of the Jordan canonical form of $m$ acting on $V$ corresponding to the nonzero eigenvalues has the form $\mathrm{diag}(\lambda_1,\ldots, \lambda_t)$ where $\lambda_1,\ldots, \lambda_t\in \mu_{n'}(k)$.  We define the \emph{Brauer character} $\theta_V$ of $V$ by $\theta_V(m) = \wh \lambda_1+\cdots+\wh \lambda_t\in \mathcal O$.  Thus the Braeur character can be viewed as a map from the set of $p$-regular elements of $M$ to $\mathcal O\subseteq F$.  If $V$ is simple, then $\theta_V$ is called an \emph{irreducible Brauer character}.  Let us say that a mapping from the $p$-regular elements of $M$ to $F$ is a \emph{$p$-regular class function} if it is  constant on $p$-regular generalized conjugacy classes.

The following basic properties of Brauer characters are proved almost exactly as in the group case~\cite{Webbbook,CurtisReinerI}.

\begin{Prop}\label{p:Brauer.chars}
Let $(F,\mathcal O,k)$ be a $p$-modular system for $M$ such that $F$ contains a primitive $n^{th}$-root of unity where $n$ is the least common multiple of the periods of elements of $M$.
\begin{enumerate}
\item Brauer characters of $kM$-modules are $p$-regular class functions.
\item If $V$ is a finitely generated $kM$-module, then $\theta_V(1)=\dim_k V$.
\item If $\chi_V$ is the $k$-valued character of a $kM$-module $V$, then $\theta_V(m) +\mathfrak m=\chi_V(m)$ for $m$ $p$-regular.
\item If $0\to U\to V\to W\to 0$ is an exact sequence of finitely generated $kM$-modules, then $\theta_V=\theta_U+\theta_W$.
\item If $U,V$ are finitely generated $kM$-modules, then, for $m$ $p$-regular, $\theta_{U\otimes_k V}(m) = \theta_U(m)\theta_V(m)$.
\end{enumerate}
\end{Prop}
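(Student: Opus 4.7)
The plan is to prove the five items mostly in parallel with the group case~\cite[Chapter~9]{Webbbook}, using Proposition~\ref{p:RZMC} as the key structural input on eigenvalues. Fix a representation $\rho\colon M\to M_r(k)$ affording $V$, and for $p$-regular $m\in M$ write $\rho(m)\sim \mathrm{diag}(N,D)$ with $D=\mathrm{diag}(\lambda_1,\dots,\lambda_t)$ and $\lambda_j\in \mu_{n'}(k)$ as in that proposition, so that $\theta_V(m)=\sum_j\wh\lambda_j\in\mathcal O$. Items (2), (3), and (4) are essentially immediate: for (2) the identity $1$ is $p$-regular with $\rho(1)=I_r$, whose diagonal eigenvalues are all $1$ (lifting to $1\in\mathcal O$), giving $\theta_V(1)=r=\dim_k V$. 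For (3), in the block form of Proposition~\ref{p:RZMC}, $\chi_V(m)$ is the sum of the diagonal entries of $D$, and reducing $\wh\lambda_j$ modulo $\mathfrak m$ recovers $\lambda_j$, so $\theta_V(m)+\mathfrak m=\sum_j\lambda_j=\chi_V(m)$. For (4), choose a $k$-basis of $V$ extending a basis of the image of $U$; in this basis $\rho(m)|_V$ is block upper triangular with diagonal blocks $\rho(m)|_U$ and $\rho(m)|_W$, so the characteristic polynomial of $\rho(m)|_V$ is the product of those of $\rho(m)|_U$ and $\rho(m)|_W$, whence the nonzero eigenvalues with multiplicity concatenate and $\theta_V(m)=\theta_U(m)+\theta_W(m)$.

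For (5), I would use that the characteristic polynomial of a tensor product $A\otimes B$ of square matrices over $k$ is $\prod_{i,j}(x-\alpha_i\beta_j)$, where $\alpha_i,\beta_j$ run over the eigenvalues of $A,B$ (with multiplicity) in an algebraic closure. Applied to $\rho_U(m)\otimes\rho_V(m)$, the nonzero eigenvalues of the tensor product (with multiplicity) are exactly the products $\lambda_i\mu_j$ of nonzero eigenvalues of $\rho_U(m)$ with nonzero eigenvalues of $\rho_V(m)$. Since $\rho\mapsto\wh\rho$ is a group isomorphism $\mu_{n'}(k)\to\mu_{n'}(\mathcal O)$, we get $\wh{\lambda_i\mu_j}=\wh\lambda_i\wh\mu_j$, and therefore
\[\theta_{U\otimes_k V}(m)=\sum_{i,j}\wh\lambda_i\wh\mu_j=\Bigl(\sum_i\wh\lambda_i\Bigr)\Bigl(\sum_j\wh\mu_j\Bigr)=\theta_U(m)\theta_V(m).\]

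The main work is (1), the class-function property. Since generalized conjugacy is the least equivalence relation on $M$ generated by $mm'\sim m'm$ and $m\sim m^{\omega+1}$, and preserves $p$-regularity, it suffices to verify that $\theta_V$ is constant on each of these two elementary moves (restricted to $p$-regular elements). For $\theta_V(mm')=\theta_V(m'm)$, I would invoke the classical fact that $\rho(m)\rho(m')$ and $\rho(m')\rho(m)$ have the same characteristic polynomial, and hence the same multiset of nonzero eigenvalues in $\mu_{n'}(k)$, so their Brauer character values agree. For the relation $m\sim m^{\omega+1}$, the point is to compare Jordan forms: if $s$ is the period of $m$ and we write $m^\omega=m^{is}$ with $is$ at least the index of $m$, then on the block $D$ of nonzero eigenvalues $\rho(m^\omega)$ acts as $D^{is}=I$ (since each $\lambda_j$ is an $s$-th root of unity), while on the nilpotent block $N$ it acts as zero; hence $\rho(m^{\omega+1})=\rho(m)\rho(m^\omega)$ is similar to $\mathrm{diag}(0,D)$, which has the same nonzero eigenvalues as $\rho(m)$, yielding $\theta_V(m^{\omega+1})=\theta_V(m)$.

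The hard part is the $m\sim m^{\omega+1}$ step, because it forces us to understand how $\rho(m^\omega)$ interacts with the decomposition of Proposition~\ref{p:RZMC}; once one checks that $\rho(m^\omega)$ is the spectral projection onto the generalized eigenspaces for nonzero eigenvalues of $\rho(m)$, everything else is routine. The remaining steps are straightforward bookkeeping with characteristic polynomials and the fact that $\rho\colon\mu_{n'}(\mathcal O)\to\mu_{n'}(k)$ is a group isomorphism.
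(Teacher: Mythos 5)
Your proof is correct, and it rests on the same structural inputs as the paper's --- Proposition~\ref{p:RZMC}, the fact that $AB$ and $BA$ have the same characteristic polynomial, and the observation that $\rho(m^{\omega+1})$ has the same nonzero eigenvalues (with multiplicity) as $\rho(m)$ --- but it is organized differently in two places. For item (1) the paper argues directly from the definition of generalized conjugacy: given witnesses $x,x'$ for $a\sim b$, it first reduces to $a^{\omega+1}$ and $b^{\omega+1}$ and then applies the $AB$/$BA$ trick to $\rho(x)$ and $\rho(a^{\omega+1}x')$. You instead invoke the characterization of generalized conjugacy as the least equivalence relation generated by $mm'\sim m'm$ and $m\sim m^{\omega+1}$ and verify $\theta_V$ is constant across each elementary move; this reduction is legitimate, but you should say explicitly why every element of a connecting chain is $p$-regular (each link is generalized conjugate to $a$, and generalized conjugacy preserves $p$-regularity), since otherwise $\theta_V$ would not even be defined at the intermediate steps. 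For items (4) and (5) the paper works module-theoretically, using that $e=m^{\omega}$ is a central idempotent of $k\langle m\rangle$ with $k\langle m\rangle e$ split semisimple, so that $0\to eU\to eV\to eW\to 0$ splits and eigenbases can be chosen; you use block-triangular matrices and the Kronecker-product eigenvalue formula instead. Your route is more elementary and equally valid, the only implicit point being that $\theta_V(m)$ depends only on the multiset of nonzero roots of the characteristic polynomial of $\rho(m)$, which the uniqueness clause of Proposition~\ref{p:RZMC} supplies. The one cosmetic blemish is that you use $\rho$ both for the representation and for the reduction map $\mu_{n'}(\mathcal O)\to\mu_{n'}(k)$.
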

\begin{proof}
As usual, write $n=n'p^v$ with $\gcd(n',p)=1$.
Suppose that $a,b\in M$ are $p$-regular with $a\sim b$.  Then we can find $x,x'\in M$ with $xx'x=x$, $x'xx'=x'$, $x'x=a^{\omega}$, $xx'=b^{\omega}$ and $xa^{\omega+1}x'=b^{\omega+1}$.  Let $V$ be a finitely generated $KM$-module. First note that $\theta_V(a)=\theta_V(a^{\omega+1})$.  To see this, let $i$ be the index and $s$ the period of $a$.  Then $s\mid n'$ because $a$ is $p$-regular.  Thus $a^{\omega}= a^{in'}$, and so $a^{\omega+1}=a^{in'+1}$.  Let $\rho\colon M\to M_r(k)$ be a matrix representation afforded by $V$ with respect to some basis.  By Proposition~\ref{p:RZMC}, $\rho(a)$ is similar to a block diagonal matrix with diagonal blocks $N,D$ with $N$ nilpotent and $D$ diagonal with entries in $\mu_{n'}(k)$.  But then $D^{in'+1}=D$, whereas $N^{in'+1}$ is nilpotent.  Thus $\rho(a^{\omega+1})=\rho(a)^{in'+1}$ has the same nonzero eigenvalues as $\rho(a)$ with multiplicities.  Thus $\theta_V(a)=\theta_V(a^{\omega+1})$ and $\theta_V(b)=\theta_V(b^{\omega+1})$ (the latter by the same reasoning).  Note that $b^{\omega+1}=x(a^{\omega+1}x')$ and $(a^{\omega+1}x')x= a^{\omega+1}a^{\omega}=a^{\omega+1}$.  Therefore, $\theta_V(a^{\omega+1}) =\theta_V(b^{\omega+1})$ by the well-known fact that if $A,B\in M_r(k)$, then $AB$ and $BA$ have the same characteristic polynomials (and hence the same eigenvalues with multiplicity). The easiest proof of this fact is to set \[C=\begin{bmatrix} \lambda I & A\\ B & I\end{bmatrix}, D=\begin{bmatrix}I & 0\\ -B & \lambda I \end{bmatrix},\] and note that $\lambda^r\det(\lambda I-AB) = \det (CD)=\det (DC)=\lambda^r\det(\lambda I-BA)$.

The second and third items are immediate from the definition.  For the remaining items, it is worth noting the following interpretation of Proposition~\ref{p:RZMC}.  Let $m\in M$ be $p$-regular with index $i$ and period $s$ and put $e=m^{\omega}$.   Note that $s\mid n'$ and $p\nmid s$.   Then $e$ is a central idempotent of $k\langle m\rangle\cong k[x]/(x^{i+s}-x^i)$, $k\langle m\rangle e\cong k[x]/(x^s-1)\cong k^s$ and $k\langle m\rangle(1-e)\cong k[x]/(x^i)$.  In particular, if $V$ is a finitely generated $kM$-module, then $V=(1-e)V\oplus eV$ as a $k\langle m\rangle$-module, and $m$ acts nilpotently on $(1-e)V$ and diagonally on $eV$ with eigenvalues in $\mu_{n'}(k)$.  Now if $0\to U\to V\to W\to 0$ is an exact sequence of finitely generated $KM$-modules, then $0\to eU\to eV\to eW\to 0$ is an exact sequence of $k\langle m\rangle e$-modules.  Since $k\langle m\rangle e\cong k^s$, we deduce that $eV\cong eU\oplus eW$ as $k\langle m\rangle e$-modules, and so the multiset of eigenvalues of $m$ on $eV$ is the union of the multisets of its eigenvalues on $eU$ and $eW$.  Since $m$ has only zero eigenvalues on $(1-e)U$, $(1-e)V$ and $(1-e)W$, the fourth item follows.

 The fifth item is argued as follows.  Retaining the above notation for $m$ $p$-regular, $e(U\otimes_k V) \cong eU\otimes_k eV$ as $k\langle m\rangle e$-modules.  But if $eU$ has a basis of eigenvectors $u_1,\ldots, u_r$ of $m$ with eigenvalues $\lambda_1,\ldots, \lambda_r$ and  $eV$ has a basis of eigenvectors $v_1,\ldots, v_t$ of $m$ with eigenvalues $\nu_1,\ldots, \nu_t$, then $eU\otimes_k eV$ has basis of eigenvectors $u_i\otimes v_j$ for $m$, with $1\leq i\leq r$, $1\leq j\leq t$, and corresponding eigenvalues $\lambda_i\nu_j$.  As, $\wh{\lambda_i\nu_j}=\wh{\lambda_i}\wh{\nu_j}$, it follows that $\theta_{U\otimes_k V}(m)=\theta_U(m)\theta_V(m)$, as required.
\end{proof}

It follows from Proposition~\ref{p:Brauer.chars}(4)--(5) that a Brauer character is a nonnegative integer combination of irreducible Brauer characters as $[V]\mapsto \theta_V$ is a well-defined ring homomorphism from $G_0(kM)$ to the ring of $p$-regular class functions.

Our notion of Brauer character is essentially the same as that of Putcha~\cite{PutchaBrauer}, except that he only defines Brauer characters on $p$-regular elements of index $0$.  Since $m\sim m^{\omega+1}$ and $m^{\omega+1}$ has index $0$,  there is no loss of information in defining the Brauer character only on such elements.  Putcha also uses complex roots of unity rather than $p$-modular systems.

Brauer characters behave well under restriction to maximal subgroups.
\begin{Prop}\label{p:restrict.brauer}
Let $V$ be a finitely generated $kM$-module and $e\in E(M)$. Then $\theta_V|_{G_e} = \theta_{eV}$.
\end{Prop}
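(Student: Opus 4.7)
The plan is to reduce the identity to a direct eigenvalue computation on the $k$-vector space decomposition $V = eV \oplus (1-e)V$ induced by the idempotent operator $e$. The key observation will be that for every $g \in G_e$ one has $ge = g = eg$ in $M$, hence $g(1-e) = g - g = 0$ in $kM$; so as a linear operator on $V$ the element $g$ annihilates $(1-e)V$ and preserves $eV$. In a basis adapted to this decomposition, $g$ therefore acts by a block diagonal matrix whose blocks are its action on the $kG_e$-module $eV$ and the zero operator on $(1-e)V$.

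Before deploying this, I would first verify that the two sides of the claimed identity have the same domain. For $g \in G_e$, the identities $g^{\omega} = e$ and $g^{\omega+1} = g$ in $M$ show that the period of $g$ as a monoid element coincides with the order of $g$ as a group element of $G_e$. Consequently $p$-regularity in $M$ and in $G_e$ agree on $G_e$, so $\theta_V|_{G_e}$ and $\theta_{eV}$ are both defined precisely on the $p$-regular elements of $G_e$.

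With the block structure in hand, for any $p$-regular $g \in G_e$ the multiset of nonzero eigenvalues (with multiplicity) of $g$ acting on $V$ coincides with the multiset of nonzero eigenvalues of $g$ acting on $eV$, since the complementary summand $(1-e)V$ contributes only zero eigenvalues. By Proposition~\ref{p:RZMC}, the value $\theta_V(g)$ is precisely the sum of the Teichmüller lifts in $\mathcal O$ of these nonzero eigenvalues, and similarly for $\theta_{eV}(g)$. Therefore $\theta_V(g) = \theta_{eV}(g)$ for every such $g$, establishing the identity.

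I do not expect any real obstacle: the argument is a short bookkeeping reduction. The only substantive point is the identity $g(1-e) = 0$ in $kM$ for $g \in G_e$, which is essentially built into the fact that $G_e$ is the group of units of $eMe$ and that $e$ is the identity element there.
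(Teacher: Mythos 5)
Your proof is correct and follows the same route as the paper: decompose $V=eV\oplus(1-e)V$, note that each $g\in G_e$ kills $(1-e)V$ (since $ge=g$) and acts on $eV$, so the nonzero eigenvalues of $g$ on $V$ and on $eV$ coincide with multiplicity. The extra check that $p$-regularity in $M$ and in $G_e$ agree on $G_e$ is a nice touch but otherwise the argument is the paper's.
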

\begin{proof}
Note that $V=eV\oplus (1-e)V$ as a $k$-vector space.  Elements of $G_e$ annihilate $(1-e)V$ and act by invertible operators on $eV$.  It follows that the nonzero eigenvalues of $g\in G_e$ on $V$ and $eV$ are the same (with multiplicities), and so $\theta_V(g)=\theta_{eV}(g)$.
\end{proof}

If $\theta_V$ is the Brauer character of a $kM$-module $V$, we define the \emph{Brauer lift} of $\theta_V$ to be the mapping $\til\theta_V\colon M\to \mathcal O$ defined by $\til\theta_V(m) = \theta_V(m^{\omega+1}_{p'})$.  This is the definition used for a Brauer character in~\cite{guralnick}, except that they work with complex roots of unity instead of $p$-modular systems.

\begin{Thm}\label{t:brauer.lift}
Let $V$ be a $kM$-module with Brauer character $\theta_V$.  Then $\til\theta_V$ is a virtual character of $M$ (over $F$) and $\til\theta_V(m) +\mathfrak m =\chi_V(m)$ for all $m\in M$, where $\chi_V$ is the $k$-valued character of $V$.
\end{Thm}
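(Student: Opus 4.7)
The plan is to prove the two assertions separately: first the congruence $\til\theta_V(m)+\mathfrak m=\chi_V(m)$, which is essentially immediate from what is already in place, and then the virtual-character statement, for which I would appeal to McAlister's characterization of virtual characters recalled at the end of the preliminaries.

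For the congruence, note that for any $m\in M$ the element $m^{\omega+1}_{p'}$ is $p$-regular. Proposition~\ref{p:Brauer.chars}(3) then gives $\theta_V(m^{\omega+1}_{p'})+\mathfrak m=\chi_V(m^{\omega+1}_{p'})$, and by the identity $\chi_V(m)=\chi_V(m^{\omega+1}_{p'})$ recalled in the preliminaries for characters in positive characteristic, this is exactly $\chi_V(m)$.

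To show $\til\theta_V$ is a virtual character of $FM$, it suffices by McAlister's theorem to check that $\til\theta_V$ is a class function on $M$ and that its restriction to each maximal subgroup $G_e$ is a virtual character. The restriction to $G_e$ is immediate: for $g\in G_e$ we have $g^\omega=e$ and $g^{\omega+1}=g$, so $\til\theta_V(g)=\theta_V(g_{p'})=\theta_{eV}(g_{p'})$ by Proposition~\ref{p:restrict.brauer}, and this is precisely the classical Brauer lift of the Brauer character $\theta_{eV}$ of the $kG_e$-module $eV$, which is a virtual $FG_e$-character by Brauer's theorem for groups.

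The main obstacle, and the step I expect to require the most care, is showing that $\til\theta_V$ is a class function. Given $m\sim m'$ witnessed by $x,x'$ with $xx'x=x$, $x'xx'=x'$, $x'x=m^\omega$, $xx'=(m')^\omega$, and $xm^{\omega+1}x'=(m')^{\omega+1}$, conjugation by $(x,x')$ induces a group isomorphism $\phi\colon G_{m^\omega}\to G_{(m')^\omega}$ sending $m^{\omega+1}$ to $(m')^{\omega+1}$. Since a group isomorphism carries $p'$-parts to $p'$-parts, $\phi(m^{\omega+1}_{p'})=(m')^{\omega+1}_{p'}$, that is, $xm^{\omega+1}_{p'}x'=(m')^{\omega+1}_{p'}$. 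I would then verify that the same pair $x,x'$ witnesses the generalized conjugacy $m^{\omega+1}_{p'}\sim (m')^{\omega+1}_{p'}$: the $p'$-parts lie in the groups $G_{m^\omega}$ and $G_{(m')^\omega}$, so $(m^{\omega+1}_{p'})^\omega=m^\omega$, $((m')^{\omega+1}_{p'})^\omega=(m')^\omega$, and $(m^{\omega+1}_{p'})^{\omega+1}=m^{\omega+1}_{p'}$ (similarly for $m'$), which matches the required relations. Both elements being $p$-regular, Proposition~\ref{p:Brauer.chars}(1) then yields $\theta_V(m^{\omega+1}_{p'})=\theta_V((m')^{\omega+1}_{p'})$, i.e., $\til\theta_V(m)=\til\theta_V(m')$, completing the class-function check and hence the theorem.
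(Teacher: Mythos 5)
Your proof is correct and follows essentially the same route as the paper: establish the congruence via Proposition~\ref{p:Brauer.chars}(3) and the identity $\chi_V(m)=\chi_V(m^{\omega+1}_{p'})$, verify the class-function property by showing generalized conjugates have generalized-conjugate $p'$-parts, and reduce via McAlister's criterion to the classical Brauer lift on each maximal subgroup. The only difference is that where the paper cites the character-theory reference for the fact that $m\sim m'$ implies $m^{\omega+1}_{p'}\sim (m')^{\omega+1}_{p'}$, you supply a correct direct argument using the induced isomorphism $G_{m^\omega}\to G_{(m')^\omega}$.
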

\begin{proof}
Note that $\til\theta_V$ takes values in $\mathcal O$.  It follows from the results of~\cite{ourcharacter} that $\chi_V(m)=\chi_V(m^{\omega+1}_{p'})$, and so $\til\theta_V(m) +\mathfrak m =\chi_V(m)$ by Proposition~\ref{p:Brauer.chars}.  The results of~\cite{ourcharacter} also show that if $a,b$ are generalized conjugates, then $a^{\omega+1}_{p'}$ and $b^{\omega+1}_{p'}$ are generalized conjugates.  It then follows that $\til\theta_V$ is a class function.  By~\cite[Corollary~7.16]{repbook} (which is stated for $\mathbb C$, but holds for any splitting field of characteristic $0$) or~\cite[Theorem~3.2]{ourcharacter}, it then suffices to show that $\til\theta_V|_{G_e}$ is a virtual character of $G_e$ for each maximal subgroup $G_e$.  If $g\in G_e$, then $g^{\omega+1}=g^{\omega}g=eg=g$.  Therefore, $\til\theta_V(g) = \theta_V(g_{p'}) = \theta_{eV}(g_{p'})$ by Proposition~\ref{p:restrict.brauer}.  Thus $\til\theta_V|_{G_e}$ is the standard Brauer lift of the Brauer character $\theta_{eV}$ of $G_e$.  But Brauer proved the standard Brauer lift is a virtual character (cf.~\cite[Theorem~18.12]{CurtisReinerI}).  This completes the proof.
\end{proof}

As a corollary, we show that the irreducible Brauer characters are linearly independent over $F$.  This was essentially done in~\cite{guralnick} for the Brauer lifts of Brauer characters.

\begin{Thm}\label{t:indep.brauer.chars}
Let $(F,\mathcal O,k)$ be a $p$-modular system for $M$ such that $F$ contains a primitive $n^{th}$-root of unity where $n$ is the least common multiple of the periods of elements of $M$. Then the irreducible Brauer characters of $M$ form a basis for the set of $p$-regular class functions.  Hence $F\otimes_{\mathbb Z} G_0(kM)$ is isomorphic to the ring of $p$-regular class functions via the map induced by $[V]\mapsto \theta_V$.  In particular, the Brauer character of a $kM$-module determines its composition factors.
\end{Thm}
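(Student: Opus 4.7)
My plan is to establish the three assertions by first proving that the $r$ irreducible Brauer characters $\theta_{T_1},\dots,\theta_{T_r}$ (one for each simple $kM$-module $T_i$) are linearly independent over $F$. A dimension count will then force them to be a basis, since the preliminaries already tell us that the number of simple $kM$-modules coincides with the number of $p$-regular generalized conjugacy classes of $M$, and this latter number is exactly the $F$-dimension of the space of $F$-valued $p$-regular class functions.

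For the linear independence step I would take a hypothetical relation $\sum_i a_i\theta_{T_i}=0$ with $a_i\in F$ and, after multiplying through by a suitable power of a uniformizer $\pi$ of $\mathcal O$, reduce to the case where each $a_i\in\mathcal O$ and at least one $a_i\notin\mathfrak m$. Reducing modulo $\mathfrak m$ and applying Proposition~\ref{p:Brauer.chars}(3) then yields $\sum_i\bar a_i\chi_{T_i}(m)=0$ for every $p$-regular $m\in M$, where $\chi_{T_i}$ is the (ordinary) $k$-valued character of $T_i$. Using the identity $\chi_V(m)=\chi_V(m^{\omega+1}_{p'})$ recalled in the preliminaries, this equation actually holds for every $m\in M$. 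Since $k$ is a splitting field, the irreducible characters $\chi_{T_1},\dots,\chi_{T_r}$ are linearly independent as $k$-valued class functions, so every $\bar a_i=0$, contradicting the choice that some $a_i$ was a unit.

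Granted linear independence, the ring isomorphism follows by unwinding definitions: the map $[V]\mapsto\theta_V$ is additive by Proposition~\ref{p:Brauer.chars}(4) and multiplicative by Proposition~\ref{p:Brauer.chars}(5), so it extends to a ring homomorphism from $F\otimes_{\mathbb Z}G_0(kM)$ to the ring of $F$-valued $p$-regular class functions, sending the $F$-basis $[T_1],\dots,[T_r]$ onto the $F$-basis $\theta_{T_1},\dots,\theta_{T_r}$. The final assertion about composition factors is just injectivity rephrased: the class $[V]\in G_0(kM)$ records precisely the multiset of composition factors of $V$, so agreement of Brauer characters forces agreement of classes in $G_0(kM)$.

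The only delicate point I anticipate is ensuring that reducing modulo $\mathfrak m$ does not trivialise the relation, which is why the normalisation to $a_i\in\mathcal O$ with some $a_i\notin\mathfrak m$ is required. Everything else rests on previously established ingredients: the Clifford-Munn-Ponizovskii classification, Brauer's count of $p$-regular classes of a maximal subgroup, the class-function identity $\chi_V(m)=\chi_V(m^{\omega+1}_{p'})$ for $kM$-modules, and the standard fact that the irreducible characters of a finite-dimensional algebra over a splitting field are linearly independent.
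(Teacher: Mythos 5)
Your proposal is correct and follows essentially the same route as the paper: reduce a hypothetical $\mathcal O$-coefficient relation modulo $\mathfrak m$, invoke the linear independence of the irreducible $k$-valued characters over the splitting field $k$, and finish with the count of $p$-regular generalized conjugacy classes. The only cosmetic difference is that you normalise so that some coefficient is a unit of $\mathcal O$ at the outset, whereas the paper runs an equivalent infinite descent through the powers of $\pi$ (and routes the reduction step through the Brauer lift $\til\theta_V$ and Theorem~\ref{t:brauer.lift} rather than directly through Proposition~\ref{p:Brauer.chars}(3) and the identity $\chi_V(m)=\chi_V(m^{\omega+1}_{p'})$).
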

\begin{proof}
Let $\pi\in \mathcal O$ generate the maximal ideal.
Let $T_1,\ldots, T_r$ represent the isomorphism classes of simple $kM$-modules.
First note that the irreducible $k$-valued characters $\chi_{T_1},\ldots,\chi_{T_r}$ are linearly independent over $k$.  This can be deduced easily since $k$ is a splitting field for $kM$, although it is true also for arbitrary fields (cf.~\cite{ourcharacter,guralnick}).   Suppose that $c_1\theta_{T_1}+\cdots+c_r\theta_{T_r}=0$ with $c_1,\ldots, c_r\in F$. By clearing denominators we may assume that $c_1,\ldots, c_r\in \mathcal O$.   By construction  $c_1\til\theta_{T_1}+\cdots+c_r\til\theta_{T_r}=0$.  Theorem~\ref{t:brauer.lift} then implies $c_1\chi_{T_1}+\cdots+c_r\chi_{T_r}=0$.  By $k$-linear independence of $\chi_{T_1},\ldots, \chi_{T_r}$, we deduce that $c_1,\ldots, c_r\in (\pi)$.   Assume inductively that $c_1,\ldots, c_s\in (\pi^s)$ with $s\geq 1$.  Then $d_1\til\theta_{T_1}+\cdots+d_r\til\theta_{T_r}=0$ with $d_i=c_i/\pi^s$.  The above argument shows that $d_1,\ldots, d_r\in (\pi)$, whence $c_1,\ldots, c_r\in (\pi^{s+1})$.  Since $\bigcap_{s\geq 1}(\pi^s)=\{0\}$, we deduce that $c_1=c_2=\cdots=c_r=0$, establishing the linear independence of $\theta_{T_1},\ldots, \theta_{T_r}$.

Since $r$ is the number of $p$-regular generalized conjugacy classes (cf.~\cite{ourcharacter}), it follows that the Brauer characters form a basis for the space of $p$-regular class functions.  Proposition~\ref{p:Brauer.chars} shows that $[V]\mapsto \theta_V$ is a well-defined ring homomorphism from $G_0(kM)$ to the ring of $p$-regular class functions.   The linear independence of the Brauer characters over $F$ shows that it induces an isomorphism of $F$-algebras after extending the scalars.
\end{proof}

Note that the final statement of the theorem was proved  by Putcha~\cite{PutchaBrauer} using the analogous result of Brauer for groups.

If $M$ is a monoid, let us denote by $\mathrm{Cl}_p(M)$ the ring of $F$-valued $p$-regular class functions on $M$. Of course, we are assuming here that the $p$-modular system is fixed.

\begin{Prop}\label{p:reduced.to.groups}
Let $e_1,\ldots, e_t$ be a set of representatives of the regular $\J$-classes of $M$.  Then the mapping $\Res\colon \mathrm{Cl}_p(M)\to \prod_{i=1}^r \mathrm{Cl}_p(G_{e_i})$ given by $\Res(f)_i = f|_{G_{e_i}}$ is an isomorphism of $F$-algebras.
\end{Prop}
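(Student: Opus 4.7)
The plan is to deduce this directly from two facts already recorded earlier in the paper: each generalized conjugacy class of $M$ meets exactly one of the maximal subgroups $G_{e_1},\ldots,G_{e_t}$, and meets it in a single conjugacy class of that group; and this restricts to a bijection between $p$-regular generalized conjugacy classes of $M$ and the disjoint union of the sets of $p$-regular conjugacy classes of the $G_{e_i}$. Granted this, the proposition becomes essentially a bookkeeping statement, and no character or representation-theoretic input beyond these combinatorial facts will be needed.

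First I would verify that $\Res$ is well-defined and a ring homomorphism. If $f\in\mathrm{Cl}_p(M)$ and $g_1,g_2\in G_{e_i}$ are conjugate in $G_{e_i}$, then $g_1\sim g_2$ as elements of $M$ (ordinary conjugation inside $G_{e_i}$ is a special case of generalized conjugacy, using $x=g, x'=g\inv$ inside the group); and an element of $G_{e_i}$ is $p$-regular in $G_{e_i}$ iff it is $p$-regular in $M$, since for $g\in G_{e_i}$ one has $g^{\omega}=e_i$ and $g^{\omega+1}=g$. Hence $f|_{G_{e_i}}\in\mathrm{Cl}_p(G_{e_i})$. That $\Res$ is an $F$-algebra homomorphism is immediate because the operations in both rings are pointwise.

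To see $\Res$ is bijective, I would construct an explicit inverse using the bijection of conjugacy classes recalled above. Given a tuple $(f_1,\ldots,f_t)\in\prod_{i=1}^t\mathrm{Cl}_p(G_{e_i})$, define $f\colon \{p\text{-regular elements of }M\}\to F$ as follows: for a $p$-regular $m\in M$, there is a unique $i$ such that the generalized conjugacy class of $m$ meets $G_{e_i}$, and it meets $G_{e_i}$ in a single $p$-regular conjugacy class; choose any $g$ in that intersection and set $f(m)=f_i(g)$. This is independent of the choice of $g$ because $f_i$ is constant on conjugacy classes of $G_{e_i}$, and $f$ is constant on generalized conjugacy classes of $M$ by construction. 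The two assignments $f\mapsto (f|_{G_{e_i}})_i$ and $(f_i)_i\mapsto f$ are then mutually inverse, since every $p$-regular generalized conjugacy class of $M$ contains an element of some (unique) $G_{e_i}$.

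The main obstacle is not really an obstacle but a point to state cleanly: one must invoke the partition of $p$-regular generalized conjugacy classes of $M$ by the $G_{e_i}$ (already cited in the preliminaries from \cite{repbook}, together with the analysis of how generalized conjugacy interacts with $p$-regularity via $m\sim m^{\omega+1}$). Once this is quoted, the argument is purely set-theoretic. As an alternative (which I would mention only briefly), the proposition also follows by tensoring the restriction isomorphism $G_0(kM)\cong \prod_{i=1}^t G_0(kG_{e_i})$ with $F$, identifying both sides with rings of $p$-regular class functions via Theorem~\ref{t:indep.brauer.chars}, and using Proposition~\ref{p:restrict.brauer} to check that the resulting isomorphism agrees with $\Res$; this route, however, uses strictly more machinery than the elementary bijection above.
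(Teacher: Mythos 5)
Your proof is correct and follows the same route as the paper, whose entire argument is the one-line observation that each $p$-regular generalized conjugacy class of $M$ meets exactly one $G_{e_i}$, and meets it in a single $p$-regular conjugacy class; you simply spell out the resulting bijection of class functions and the routine well-definedness checks in more detail.
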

\begin{proof}
This follows from the fact that each $p$-regular generalized conjugacy class of $M$ intersects exactly one $G_{e_i}$ and it intersects it in a $p$-regular conjugacy class.
\end{proof}

We remark that the restriction of an irreducible Brauer character of $M$ to a maximal subgroup is a Brauer character, but it need not be irreducible.  However, from Clifford-Munn-Ponizovskii theory, it does follow that if we use as a basis for $\mathrm{Cl}_p(M)$ the irreducible Brauer characters and  as a basis for $\prod_{i=1}^r \mathrm{Cl}_p(G_{e_i})$ the irreducible Brauer characters running over all the groups, then for a suitable ordering of the Brauer characters the matrix of $\Res$ is unipotent upper triangular.  The proof is the same as for the case of complex characters~\cite{repbook}.

One can also define the Brauer character table of a monoid $M$ to have rows indexed by irreducible Brauer characters and columns indexed by $p$-regular generalized conjugacy classes.  The Brauer character table is invertible by linear independence of the Brauer characters.  For suitable orderings of the irreducible Brauer characters and $p$-regular generalized conjugacy classes, the Brauer character table will be block upper triangular with the diagonal blocks the Brauer character tables of the maximal subgroups of $M$ (one per regular $\mathscr J$-class).  The proof is exactly the same as the analogous result for ordinary character tables, cf.~\cite{McAlisterCharacter,RhodesZalc} or~\cite[Chapter~7]{repbook}.

If $\chi$ is an irreducible character of $M$ over $F$, then its restriction to the set of $p$-regular elements is a $p$-regular class function and hence a linear combination of irreducible Brauer characters.  As is the case for groups, this linear combination is given by via the decomposition matrix.

\begin{Prop}\label{p:decomp.as.brauer}
Let $(F,\mathcal O,k)$ be a $p$-modular system for $M$ such that $F$ contains a primitive $n^{th}$-root of unity where $n$ is the least common multiple of the periods of elements of $M$.  Then we have a commutative diagram
\[\begin{tikzcd}
G_0(FM)\ar{r}{[V]\mapsto \chi_V}\ar{d}[swap]{d} & \mathrm{Cl}(M)\ar{d}{r}\\
G_0(kM)\ar{r}{[V]\mapsto \theta_V} & \mathrm{Cl}_p(M)
\end{tikzcd}\]
where $r$ is the map obtained by restriction to the $p$-regular elements of $M$.
\end{Prop}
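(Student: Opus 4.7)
The plan is to reduce the commutativity to the classical observation that, for a group element of order coprime to $p$, the trace of its action on an $\mathcal O$-lattice equals the Brauer lift of its Brauer character on the reduction. Since $d$, $r$, $[V]\mapsto\chi_V$ and $[V]\mapsto\theta_V$ are all group homomorphisms (the last by Proposition~\ref{p:Brauer.chars}), it suffices to fix a finitely generated $FM$-module $V$, an $R$-form $L\subseteq V$ (so that $d[V]=[\ov L]$ with $\ov L = k\otimes_{\mathcal O}L$), and a $p$-regular $m\in M$, and to verify $\chi_V(m)=\theta_{\ov L}(m)$.

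I would then exploit the idempotent $e=m^{\omega}\in M$. Because $e$ commutes with $m$ in $M$ and acts as an $\mathcal O$-linear idempotent on $L$, one obtains an $m$-stable splitting $L=eL\oplus(1-e)L$ of $\mathcal O$-lattices, compatible with extension to $V=eV\oplus(1-e)V$ and reduction to $\ov L=e\ov L\oplus(1-e)\ov L$. On $(1-e)L$ the action of $m$ is nilpotent (since $m^{ks}=e$ for any $k$ with $ks$ a multiple of the period and at least the index of $m$, and $e(1-e)=0$), so this summand contributes $0$ to $\chi_V(m)$ and, as the Brauer character depends only on nonzero eigenvalues, also $0$ to $\theta_{\ov L}(m)$. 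On $eL$ the identity $me=m^{\omega+1}\in G_e$ shows that $m$ acts as $g=m^{\omega+1}$, an element of order $s$ (the period of $m$) in $G_e$.

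To finish, I would fix an $\mathcal O$-basis of $eL$, producing a matrix $B$ over $\mathcal O$ with $B^s=I$ that represents $g$. Because $\gcd(s,p)=1$ and $F$ contains $\mu_{n'}$, the matrix $B$ is diagonalizable over $F$ with eigenvalues $\mu_1,\dots,\mu_{r'}\in \mu_s(F)\subseteq \mu_{n'}(\mathcal O)$, so $\chi_V(m)=\operatorname{tr}_F(B)=\sum_j\mu_j$. Reducing the characteristic polynomial of $B$ coefficient-wise modulo $\mathfrak m$, the eigenvalues of $\ov B$ in $k$ are precisely $\rho(\mu_1),\dots,\rho(\mu_{r'})\in\mu_{n'}(k)$, and so, by the definition of the Brauer character, $\theta_{\ov L}(m)=\sum_j \wh{\rho(\mu_j)}=\sum_j\mu_j=\chi_V(m)$. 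The only real work is the bookkeeping in the second paragraph: one must verify that the $e$-decomposition of $L$ really does split off an $m$-stable nilpotent piece compatibly with both base changes. Once that is in place, the conclusion is pure linear algebra powered by the reduction bijection $\mu_{n'}(\mathcal O)\to\mu_{n'}(k)$ applied to the eigenvalue multiset of $B$.
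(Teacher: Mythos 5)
Your proof is correct and follows essentially the same route as the paper: both arguments come down to computing the characteristic polynomial of $m$ on an $\mathcal O$-form over $\mathcal O$, observing that its nonzero roots lie in $\mu_{n'}(\mathcal O)$, and reducing coefficient-wise modulo $\mathfrak m$ so that the bijection $\mu_{n'}(\mathcal O)\to\mu_{n'}(k)$ matches $\chi_V(m)$ with $\theta_{\ov L}(m)$. The only cosmetic difference is that you make the splitting $L=eL\oplus(1-e)L$ via $e=m^{\omega}$ explicit, whereas the paper absorbs that step into its citation of Proposition~\ref{p:RZMC} to pin down the shape of the minimal polynomial.
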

\begin{proof}
Let $V$ be an $FM$-module and let $W$ be full $\mathcal OM$-lattice in $V$.  Let $m\in M$ be $p$-regular.  Let $\psi\colon M\to M_{\ell}(\mathcal O)$ be a matrix representation afforded by $W$ with respect to an $\mathcal O$-basis for $W$ (which is necessarily an $F$-basis for $V$).   The characteristic polynomial $p(x)$ of $\psi(m)$ belongs to $\mathcal O[x]$.   It follows from Proposition~\ref{p:RZMC} that the minimal polynomial $q(x)$ of $\psi(m)$ over $F$ is of the form $x^a\cdot \prod_{i=1}^t(x-\xi_i)$ where the $\xi_i$ are distinct $(n')^{th}$-roots of unity, where $n=n'p^v$ with $\gcd(n',p)=1$.  Therefore, $p(x)=x^j\cdot \prod_{i=1}^t(x-\xi_i)^{n_i}$ for some $j\geq a$ and $n_i\geq 1$.  Let $\rho\colon \mathcal O\to k$ be the projection. Then the characteristic polynomial of $m$ on $W/\mathfrak mW$ is $x^j\cdot \prod_{i=1}^t(x-\rho(\xi_i))^{n_i}$.  It follows that $\theta_{W/\mathfrak mW}(m) = \sum_{i=1}^t n_i\xi_i=\chi_V(m)$ by definition of the Brauer character associated to $W/\mathfrak mW$.  As $d([V]) = [W/\mathfrak mW]$, this completes the proof.
\end{proof}

Proposition~\ref{p:decomp.as.brauer}, in light of Theorem~\ref{t:brauer.lift}, provides another proof that the decomposition map is surjective (Theorem~\ref{t:decomp.surjective}).

We can now reinterpret Question~\ref{conjecture} in terms of character theory.   Let $(F,\mathcal O,k)$ be a $p$-modular system for $M$ such that $F$ contains a primitive $n^{th}$-root of unity where $n$ is the least common multiple of the periods of elements of $M$. The Cartan matrix $C(FM)$ (which coincides with $C(\mathbb CM)$) is nonsingular if and only if each finitely generated projective $FM$-module is determined up to isomorphism by its ordinary character.  On the other hand, the Cartan matrix $C(kM)$ is nonsingular if and only if each finitely generated projective $kM$-module is determined up to isomorphism by its Brauer character.  Let us now reformulate our question.

\begin{Question}\label{conj:characterversion}
Let $p>0$ be a prime and $(F,\mathcal O,k)$ a $p$-modular system for $M$ such that $F$ contains a primitive $n^{th}$-root of unity where $n$ is the least common multiple of the periods of elements of $M$.  Suppose that $C(FM)$ is nonsingular and that $P$ is a finitely generated projective $\mathcal OM$-module.  Under what conditions on $M$ must the ordinary character of $F\otimes_{\mathcal O}P$ be determined by its restriction to the set of $p$-regular elements of $M$?
\end{Question}

Let us formally prove the relationship between these two versions of the question.

\begin{Prop}\label{p:reform}
Let $p>0$ be a prime and $(F,\mathcal O,k)$ a $p$-modular system for $M$ such that $F$ contains a primitive $n^{th}$-root of unity where $n$ is the least common multiple of the periods of elements of $M$.  Suppose that $C(FM)$ is nonsingular and that, for every finitely generated projective $\mathcal OM$-module $P$, the ordinary character of $F\otimes_{\mathcal O}P$ be determined by its restriction to the set of $p$-regular elements of $M$.  Then $C(kM)$ is nonsingular.
\end{Prop}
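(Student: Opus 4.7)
The plan is to exploit the commutative Brauer square from Theorem~\ref{t:brauer.square} together with the character/Brauer character diagram in Proposition~\ref{p:decomp.as.brauer}, reducing the nonsingularity of $C(kM)$ to an injectivity statement about restriction to $p$-regular elements.

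First, I would rephrase the desired conclusion. By Theorem~\ref{t:brauer.square}, the Cartan map $c\colon K_0(kM)\to G_0(kM)$ factors as $c = d\circ C(FM)\circ e$, where $C(FM)$ denotes the Cartan homomorphism for $FM$. So $C(kM)$ is nonsingular exactly when $c$ is injective. The hypothesis that $C(FM)$ is nonsingular together with the injectivity of $e$ provided by Theorem~\ref{t:decomp.surjective} shows that $C(FM)\circ e$ is injective, so it suffices to prove that $d$ is injective on the subgroup $H = \mathrm{Im}(C(FM)\circ e)\subseteq G_0(FM)$. Concretely, if $\tilde P_1,\ldots,\tilde P_r$ denote the projective indecomposable $\mathcal{O}M$-modules, then $H$ is the subgroup generated by the classes $[F\otimes_{\mathcal O}\tilde P_i]$, so an arbitrary element of $H$ has the shape $[F\otimes_{\mathcal O} P]-[F\otimes_{\mathcal O}P']$ for suitable finitely generated projective $\mathcal OM$-modules $P,P'$.

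Next I would bring in characters via Proposition~\ref{p:decomp.as.brauer}, which gives the commutative square
\[\begin{tikzcd}
G_0(FM)\ar{r}{\chi}\ar{d}[swap]{d} & \mathrm{Cl}(M)\ar{d}{r}\\
G_0(kM)\ar{r}{\theta} & \mathrm{Cl}_p(M)
\end{tikzcd}\]
where $\chi$ is the ordinary character map, $\theta$ is the Brauer character map, and $r$ is restriction to $p$-regular elements. The map $\chi$ is injective (ordinary characters of $FM$-modules are linearly independent over a splitting field), and $\theta$ is injective by Theorem~\ref{t:indep.brauer.chars}. Injectivity of $d$ on $H$ is therefore equivalent to injectivity of $r$ on $\chi(H)$, again by a diagram chase.

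Finally, the hypothesis that for every finitely generated projective $\mathcal OM$-module $P$ the character $\chi_{F\otimes_{\mathcal O}P}$ is determined by its restriction to $p$-regular elements means, by additivity and linearity, that the restriction map $r$ separates virtual differences $\chi_{F\otimes_{\mathcal O}P}-\chi_{F\otimes_{\mathcal O}P'}$: indeed, if two such characters agree on the $p$-regular elements, then both full characters equal the unique character with that $p$-regular restriction, hence are equal. This is exactly injectivity of $r|_{\chi(H)}$, completing the chain of reductions and yielding that $c$ is injective, i.e., $C(kM)$ is nonsingular. The whole argument is essentially a pair of diagram chases, the only mild subtlety being the identification of $\chi(H)$ with the span of characters of $F\otimes_{\mathcal O}P$ for projective $\mathcal OM$-modules $P$, which I expect to handle by simply splitting coefficients into positive and negative parts.
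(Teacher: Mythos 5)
Your proposal is correct and follows essentially the same route as the paper's proof: factor the Cartan map of $kM$ through the Brauer square, use injectivity of $e$ (Theorem~\ref{t:decomp.surjective}) and of $C(FM)$ to reduce to injectivity of $d$ on the image of $C(FM)e$, and then translate this via Proposition~\ref{p:decomp.as.brauer} into the character-theoretic hypothesis. The paper states this more tersely, but the underlying diagram chase and the identification of the relevant subgroup with differences of classes $[F\otimes_{\mathcal O}P]$ for projective $\mathcal OM$-modules $P$ are the same.
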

\begin{proof}
 Theorem~\ref{t:decomp.surjective} shows that $e\colon K_0(kM)\to K_0(FM)$ is injective and our assumption says that the Cartan map $C(FM)\colon K_0(FM)\to G_0(FM)$ is injective.  Thus we need to verify that the decomposition map is injective on the image of $C(FM)e$ by Theorem~\ref{t:brauer.square}.  Let $P$ be a finitely generated projective $\mathcal OM$-module.  Then $C(FM)e([P/\mathfrak mP]) = [F\otimes_{\mathcal O} P]$ (where the right hand side is viewed as an element of $G_0(FM)$).  So we want to show that the class in $G_0(FM)$ of $F\otimes_{\mathcal O} P$ is determined up to isomorphism by $d([F\otimes_{\mathcal O} P])$.  But by Proposition~\ref{p:decomp.as.brauer}, this is the same as saying that the ordinary character of $F\otimes_{\mathcal O}P$ is determined by its restriction to the $p$-regular elements of $M$.
\end{proof}


Brauer showed that the ordinary character of a finitely generated projective $\mathcal OG$-module vanishes on every element that is not $p$-regular for a group $G$~\cite[Theorem~18.26]{CurtisReinerI}.  This, unfortunately, is not the case for monoids.

\begin{Example}
 Let $M$ be the multiplicative monoid of $\mathbb F_3$, which we think of as $\mathbb Z/3\mathbb Z$, and let $p=2$.  The $2$-regular elements of $M$ are $\ov 1,\ov 0$ (which have period 1), whereas $\ov 2$ has period $2$ (putting $\ov n = n+3\mathbb Z$).  Note that $(\mathbb Q_2,\mathbb Z_2,\mathbb F_2)$ is a $2$-modular system with $F$ containing a primitive square root of unity.  Then $\mathbb Z_2M$, itself, is a finitely generated projective module, and the character of $\mathbb Q_2M$ is given by $\chi(\ov 1)=3$, $\chi(\ov 2)= 1$ and $\chi(\ov 0)=1$.  Thus $\chi$ does not vanish on the element $\ov 2$ of period $2$.  The problem here is that $\mathbb Z_2M$ does not restrict to a projective module over $\mathbb Z_2\langle \ov 2\rangle$, as would happen if $M$ were a group.  Nonetheless, the ordinary character of any finitely generated projective $\mathbb Z_2M$-module is determined by its restriction to the $2$-regular elements of $M$.  Indeed, it is easy to see that the primitive idempotents of $\mathbb Z_2M$ are $\ov 0$ and $\ov 1-\ov 0$.  One has that $\mathbb Q_2M\ov 0=\mathbb Q_2\ov 0$ is the trivial module and $\mathbb Q_2M(\ov 1-\ov 0)$ is $\mathbb Q_2\mathbb F_3^\times$, viewed as a $\mathbb Q_2M$-module by having $\ov 0$ act as zero, and the elements of $\mathbb F_3^\times$ act as usual.  Hence if $P$ is a finitely generated projective indecomposable $\mathbb Z_2M$-module, then $\mathbb Q_2\otimes_{\mathbb Z_2}P\cong \mathbb Q_2^a\oplus \mathbb (\mathbb Q_2\mathbb F_3^\times)^b$ for some $a\geq 0$, $b\geq 0$.  If $\chi$ is the character of this module, one has $a=\chi(\ov 0)$ and $b=\frac{1}{2}(\chi(\ov 1)-\chi(\ov 0))$.  So $P$ is still determined by the value of $\chi$ on the $2$-regular elements $\ov 0,\ov 1$.
 \end{Example}

\section{Positive results on Question~\ref{conjecture}}
In this section we  present some positive results concerning Question~\ref{conjecture}.

\subsection{Regular monoids}
Recall that if $A$ is a finite dimensional algebra, we put $\cd (A)=\det C(A)$.  Our goal is to compute the Cartan determinant of a regular
monoid in terms of the Cartan determinants of its maximal subgroups.  This will in particular answer Question~\ref{conjecture} for such monoids.

The following  is a special case of~\cite[Proposition~2.1]{quasistrat}.

\begin{Prop}\label{p:control.glob.cd}
Let $A$ be a finite dimensional $K$ algebra and $e\in A$ an idempotent such that  $AeA$ is projective as a left $A$-module.
Then $\cd (A)=\cd (eAe)\cdot \cd (A/AeA)$ where $\cd (0)=1$ by convention.
\end{Prop}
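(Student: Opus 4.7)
The plan is to exploit the hypothesis that $AeA$ is projective as a left $A$-module to split the simple $A$-modules and projective indecomposable $A$-modules cleanly according to the behavior of $e$, and thereby exhibit $C(A)$ as a block triangular matrix whose diagonal blocks are $C(eAe)$ and $C(A/AeA)$.

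First, observe that projectivity of $AeA$ implies that the short exact sequence $0 \to AeA \to A \to B \to 0$, where $B = A/AeA$, splits as left $A$-modules. Consequently $B$ is itself a projective left $A$-module, and the inflation functor from $\mathrm{mod}\text{-}B$ to $\mathrm{mod}\text{-}A$ is exact, fully faithful, and sends projectives to projectives. Next, partition the simple $A$-modules into two families: a ``$C$-type'' family of those $S$ with $eS \neq 0$, which by standard idempotent theory are in bijection with simple $eAe$-modules via $S \mapsto eS$, and a ``$B$-type'' family of those $S$ with $eS = 0$, which are precisely the inflations of simple $B$-modules.

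The projective covers behave accordingly. For a $C$-type simple $S$, multiplying the projective cover $P \twoheadrightarrow S$ by $e$ yields the projective cover $eP = P^{eAe}$ of $eS$ as an $eAe$-module, a standard fact about corners. For a $B$-type simple $S$, the projective $B$-cover $P^B$ of $S$ is projective also as an $A$-module (here is where $B$ being $A$-projective is essential), and it is indecomposable with top $S$ when viewed over $A$, hence it is the $A$-projective cover $P$ of $S$. With the simples ordered so that $C$-type come first, I will now compute the entries of $C(A)$. If $i$ is $C$-type and $j$ is $B$-type, then $P_j$ is inflated from $B$, so $eP_j = 0$; applying the exact functor $e\cdot$ to any composition series of $P_j$ shows every composition factor is killed by $e$, so $[P_j : S_i] = 0$. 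If $i, j$ are both $B$-type, the composition factors of the inflation of $P_j^B$ match those of $P_j^B$ itself, giving $C(B)_{ij}$. If $i, j$ are both $C$-type, the exact functor $e \cdot$ turns a composition series of $P_j$ into a filtration of $eP_j = P_j^{eAe}$ whose nonzero successive quotients are the $eAe$-simples $eS_i$, yielding $[P_j : S_i] = [P_j^{eAe} : S_i^{eAe}] = C(eAe)_{ij}$.

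Therefore $C(A)$ is block lower triangular with diagonal blocks $C(eAe)$ and $C(B)$, so $\det C(A) = \det C(eAe) \cdot \det C(B)$, which is the desired identity. The convention $\cd(0) = 1$ takes care of the degenerate cases where $eAe = 0$ or $B = 0$, i.e.\ where one of the two families of simples is empty. The main obstacle is the justification that the $A$-projective cover of a $B$-type simple coincides with the $B$-projective cover inflated; this is exactly where the projectivity of $AeA$ is used, since in general the inflation of a projective $B$-module need not be $A$-projective, which would destroy the clean block structure and the multiplicativity of Cartan determinants.
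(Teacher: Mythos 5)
Your partition of the simples into $C$-type and $B$-type and your identification of the top-left block with $C(eAe)$ are fine, but the argument fails at its first step, and the failure propagates. Projectivity of the submodule $AeA$ does not make $0\to AeA\to A\to B\to 0$ split: a short exact sequence of modules splits when the \emph{quotient} is projective, not the sub. Concretely, let $A$ be the upper triangular $2\times 2$ matrices over $K$ and $e=e_{11}$. Then $AeA=Ke_{11}+Ke_{12}\cong S_1\oplus S_1$ is projective, yet $B=A/AeA\cong S_2$ is not projective over $A$; so the inflation of the $B$-projective cover of a $B$-type simple need not be its $A$-projective cover. In the same example $P_2=Ae_{22}$ satisfies $eP_2=Ke_{12}\neq 0$ and $[P_2:S_1]=1$, so the block of $C(A)$ with $i$ of $C$-type and $j$ of $B$-type does \emph{not} vanish; here $C(A)=\left(\begin{smallmatrix}1&1\\0&1\end{smallmatrix}\right)$ is not block triangular the way you claim, although the conclusion $1=1\cdot 1$ of course still holds.

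The hypothesis enters through column operations, not through vanishing blocks; this is essentially the content of the result of Liu and Paquette that the paper quotes instead of reproving. Write $P_j=Ae_j$ for primitive orthogonal idempotents with $1=\sum_k e_k$, and use the exact sequence $0\to AeAe_j\to Ae_j\to B\bar e_j\to 0$. Since $AeA=\bigoplus_k AeAe_k$, each $AeAe_j$ is a direct summand of the projective module $AeA$, hence projective. Moreover $(AeA)^2=AeA$ forces every indecomposable summand $Q$ of $AeA$ to satisfy $Q=AeAQ=AeQ$, so $Q$ is generated by $eQ$ and its simple top $S$ has $eS\neq 0$; thus every indecomposable summand of $AeAe_j$ is a $C$-type projective indecomposable. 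Hence for $j$ of $B$-type one gets, in $G_0(A)$, $[P_j]=\sum_{i\leq m}b_{ij}[P_i]+[B\bar e_j]$ with $b_{ij}\in\mathbb Z_{\geq 0}$, where $[B\bar e_j]$ (the inflated $B$-projective cover) involves only $B$-type simples with multiplicities $C(B)_{ij}$. Subtracting $b_{ij}$ times column $i$ from column $j$ for each $B$-type $j$ preserves the determinant and yields the block lower triangular matrix $\left(\begin{smallmatrix}C(eAe)&0\\ \ast&C(B)\end{smallmatrix}\right)$, whence $\cd(A)=\cd(eAe)\cdot\cd(A/AeA)$.
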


\begin{Thm}\label{t:main.cd}
Let $M$ be a regular monoid and $K$ a field. Let $e_1,\ldots, e_t$ be idempotent representatives of the $\mathscr J$-classes of $M$.  Then \[\cd (KM)=\cd (KG_{e_1})\cd (KG_{e_2})\cdots \cd (KG_{e_t}).\]  Consequently, $\cd (KM)=1$ if the characteristic of $K$ does not divide the order of any maximal subgroup of $M$, and, otherwise, $\cd (KM)$ is a proper $p$-power where $p>0$ is the characteristic of $K$.  In particular, $C(KM)$ is nonsingular for all fields $K$.
\end{Thm}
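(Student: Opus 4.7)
The plan is to induct along a principal series of $M$ and apply Proposition~\ref{p:control.glob.cd} at each step. Fix a principal series $\emptyset = I_0 \subsetneq I_1 \subsetneq \cdots \subsetneq I_n = M$, ordered so that $J_{e_k} = I_k\setminus I_{k-1}$ runs through the $\mathscr{J}$-classes of $M$ from the bottom up (idempotent representatives $e_k$ exist since $M$ is regular). Put $A_k = KM/KI_k$, so $A_0 = KM$ and $A_n = 0$, and write $\ov{e_k}$ for the image of $e_k$ in $A_{k-1}$. The ideal of $A_{k-1}$ generated by $\ov{e_k}$ is then exactly $KI_k/KI_{k-1}$, since $Me_kM = I_k$.

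The technical heart of the argument is to show that this ideal is projective as a left $A_{k-1}$-module, and that $\ov{e_k}A_{k-1}\ov{e_k} \cong KG_{e_k}$. For projectivity, I would use regularity of $J_{e_k}$ to write $J_{e_k} = L_{f_1}\sqcup \cdots \sqcup L_{f_s}$ as a disjoint union of $\mathscr{L}$-classes with idempotent representatives $f_i$. For each $i$, one checks that $I_{k-1}f_i = Mf_i\setminus L_{f_i}$ (the elements of $Mf_i$ not $\mathscr{L}$-equivalent to $f_i$ are strictly $\mathscr{J}$-below $f_i$, hence in $I_{k-1}$). The Clifford-Munn-Ponizovskii description recalled in the excerpt around \eqref{eq:schutz} then identifies $A_{k-1}\ov{f_i}$ with $KL_{f_i}$. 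Hence
\[KI_k/KI_{k-1} = \bigoplus_{i=1}^s A_{k-1}\ov{f_i},\]
and each summand $A_{k-1}\ov{f_i}$ is a direct summand of the regular $A_{k-1}$-module because $\ov{f_i}$ is idempotent, so it is projective. The isomorphism $\ov{e_k}A_{k-1}\ov{e_k} \cong KG_{e_k}$ is the corresponding half of the same Clifford-Munn-Ponizovskii statement.

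With these ingredients in hand, Proposition~\ref{p:control.glob.cd} applied to $A_{k-1}$ with idempotent $\ov{e_k}$ gives
\[\cd(A_{k-1}) = \cd(\ov{e_k}A_{k-1}\ov{e_k})\cdot \cd(A_k) = \cd(KG_{e_k})\cdot \cd(A_k).\]
A downward induction from $\cd(A_n) = \cd(0) = 1$ yields the product formula $\cd(KM) = \prod_{k=1}^n \cd(KG_{e_k})$. For the consequences: if $\mathrm{char}(K)$ divides no $|G_{e_i}|$, every $KG_{e_i}$ is semisimple with $\cd(KG_{e_i}) = 1$, so $\cd(KM) = 1$; otherwise Brauer's classical result that the Cartan determinant of a group algebra in characteristic $p$ is a (nonzero) power of $p$ shows each factor is a nonzero $p$-power, so the product is a proper $p$-power and in particular nonzero, giving nonsingularity of $C(KM)$.

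The step I expect to require the most care is the projectivity of $KI_k/KI_{k-1}$: one must confirm that the $\mathscr{L}$-class decomposition of $J_{e_k}$ genuinely transports through the quotient $A_{k-1}$ and matches up with the summands $A_{k-1}\ov{f_i}$, so that the excerpt's description of $(KM/KI)e$ applies verbatim rather than needing to be reworked. Everything else is a formal iteration of Proposition~\ref{p:control.glob.cd} together with known facts about group algebras.
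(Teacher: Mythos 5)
Your proposal is correct and follows essentially the same route as the paper: iterate Proposition~\ref{p:control.glob.cd} down a principal series, using that $KI_k/KI_{k-1}$ is a projective left $KM/KI_{k-1}$-module and that $e_kA_{k-1}e_k\cong KG_{e_k}$. The only difference is that you prove the projectivity by hand via the $\mathscr L$-class decomposition of $J_{e_k}$, where the paper simply cites \cite[Lemma~5.18]{repbook} for the isomorphism $KI_k/KI_{k-1}\cong (A_{k-1}e_k)^{\ell_k}$.
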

\begin{proof}
The second statement follows from the first, since group algebras are semisimple in nonmodular characteristic and Brauer proved that, in modular characteristic, the Cartan determinant of a group algebra is a proper $p$-power~\cite[Theorem~21.24]{CurtisReinerI}.  To prove the first statement, let \[\emptyset=I_0\subsetneq I_1\subsetneq\cdots \subsetneq I_t=M\] be a principal series.  We may assume that the idempotents are chosen so that $I_k\setminus I_{k-1} = J_{e_k}$.  Let $A_k = KM/KI_k$ for $i=0,\ldots, t-1$.  Note that $A_{t-1}\cong KG_{e_t}$ and $A_0=KM$.  For $0\leq k\leq t-1$, we have $KI_{k+1}/KI_k=A_ke_{k+1}A_k$, $e_{k+1}A_ke_{k+1}\cong KG_{e_{k+1}}$, $A_k/(KI_{k+1}/KI_k)\cong A_{k+1}$ and, by~\cite[Lemma~5.18]{repbook},  $KI_{k+1}/KI_k\cong (A_ke_{k+1})^{\ell_{k+1}}$ as left $A_k$-modules, where $\ell_{k+1}$ is the number of $\mathscr L$-classes in $J_{k+1}$, and hence is a projective left $A_k$-module. Therefore, $\cd(A_k)=\cd (KG_{e_{k+1}})\cd (A_{k+1})$ by Proposition~\ref{p:control.glob.cd}.  Downward  induction yields $\cd (A_k) = \cd (KG_{e_t})\cdots \cd(KG_{e_{k+1}})$ for $0\leq k\leq t-1$.  Taking $k=0$, establishes $\cd (KM)=\cd (KG_{e_1})\cdots \cd (KG_{e_t})$.
\end{proof}

\subsection{Monoids with aperiodic stabilizers}

If $p>0$ is a prime, we say that a finite monoid $M$ is a \emph{$p'$-monoid} if no element of $M$ has period divisible by $p$, that is, $p$ does not divide the order of any maximal subgroup of $M$.  A finite monoid $M$ is called \emph{aperiodic} if all its elements have period $1$, that is, all its maximal subgroups are trivial.

If $M$ is a monoid and $a\in M$, then the left stabilizer of $a$ is the submonoid $\mathrm{Stab}_L(a)=\{m\in M\mid ma=a\}$.   The right stabilizer $\mathrm{Stab}_R(a)$ is defined dually.   For example, if $M$ is a group, then each left and right stabilizer is trivial.

The following group theoretic result is well known, cf.~\cite[Corollary~11.3.6]{Webbbook}.

\begin{Prop}\label{p:relatively.proj}
Let $R$ be a commutative ring with unit, $G$ a finite group and $H$ a subgroup.  Suppose that $V$ is an $RG$-module whose restriction to $RH$ is projective and $[G:H]$ is a unit in $R$.  Then $V$ is a projective $RG$-module.
\end{Prop}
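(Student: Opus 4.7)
The plan is to use the classical averaging argument (sometimes called the relative trace or transfer map) to show that every short exact sequence of $RG$-modules ending in $V$ splits.

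First I would set up the situation: let $0\to A\xrightarrow{i} B\xrightarrow{\pi} V\to 0$ be a short exact sequence of $RG$-modules. Since $V$ is projective as an $RH$-module, the restricted sequence splits in $\modu RH$, so there exists an $RH$-linear section $s\colon V\to B$ with $\pi\circ s=\mathrm{id}_V$. The task is to modify $s$ into an $RG$-linear section.

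Fix a set of left coset representatives $T$ for $H$ in $G$, so $G=\bigsqcup_{t\in T}tH$, and write $n=[G:H]$, which by hypothesis is a unit in $R$. Define
\[
\widetilde{s}\colon V\to B,\qquad \widetilde{s}(v)=\frac{1}{n}\sum_{t\in T}t\cdot s(t^{-1}v).
\]
I would then verify three things in order. (a) \emph{Independence of the transversal:} if $t$ is replaced by $th$ with $h\in H$, then $th\cdot s((th)^{-1}v)=t\cdot hs(h^{-1}t^{-1}v)=t\cdot s(t^{-1}v)$ by $RH$-linearity of $s$, so $\widetilde{s}$ depends only on the coset space $G/H$. (b) \emph{Sectioning property:} applying $\pi$ termwise, $\pi(\widetilde{s}(v))=\frac{1}{n}\sum_t t\cdot\pi(s(t^{-1}v))=\frac{1}{n}\sum_t t\cdot t^{-1}v=v$. (c) \emph{$RG$-linearity:} for $g\in G$, left multiplication by $g$ permutes the left cosets, so for each $t\in T$ we can write $gt=t'h_t$ with $t'\in T$ and $h_t\in H$, and $t\mapsto t'$ is a bijection of $T$. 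Then
\[
g\cdot\widetilde{s}(v)=\frac{1}{n}\sum_{t\in T}gt\cdot s(t^{-1}v)=\frac{1}{n}\sum_{t\in T}t'h_t\cdot s(t^{-1}v)=\frac{1}{n}\sum_{t\in T}t'\cdot s(h_tt^{-1}v),
\]
using $RH$-linearity again. Since $h_tt^{-1}=t'^{-1}g$, the right-hand side equals $\frac{1}{n}\sum_{t'\in T}t'\cdot s(t'^{-1}(gv))=\widetilde{s}(gv)$.

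With $\widetilde{s}$ an $RG$-linear section of $\pi$, the sequence splits in $\modu RG$, so $V$ is a direct summand of $B$. Since this holds for every surjection onto $V$ (in particular for one from a free module), $V$ is projective. The only genuine subtlety is step (c), where one must keep track of how the transversal transforms under left multiplication; the other steps are essentially bookkeeping.
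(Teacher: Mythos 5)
Your averaging argument is correct and complete: the independence of transversal, the sectioning property, and the $RG$-equivariance computation (the only delicate point, which you handle properly via $h_tt^{-1}=t'^{-1}g$) all check out, and applying the resulting splitting to a surjection from a free module yields projectivity. The paper itself gives no proof, citing Webb's Corollary~11.3.6, whose proof is exactly this relative trace construction, so your route is the standard one.
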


We can now generalize Brauer's result~\cite[Theorem~18.26(ii)]{CurtisReinerI} to a class of monoids.

\begin{Prop}\label{p:p'stablizer}
Let $M$ be a monoid such that the left stabilizer of any element of $M$ is a $p'$-prime monoid.  Let $(F,\mathcal O,k)$ be a $p$-modular system for $M$ such that $F$ contains a primitive $n^{th}$-root of unity where $n$ is the least common multiple of the periods of elements of $M$.  Then if $P$ is a finitely generated projective $\mathcal OM$-module and $\chi$ is the ordinary character of $F\otimes_{\mathcal O} P$, then $\chi(m)=0$ for every element $m\in M$ that is not $p$-regular.
\end{Prop}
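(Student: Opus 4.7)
The plan is to reduce the problem to Brauer's classical vanishing theorem~\cite[Theorem~18.26]{CurtisReinerI} for ordinary characters of projective modules over group algebras. Set $e = m^{\omega}$ and $g = m^{\omega+1}\in G_e$; since $m$ is not $p$-regular, $g$ has order divisible by $p$. Put $V = F\otimes_{\mathcal O}P$ and decompose $V = eV\oplus (1-e)V$. Because $e$ is a power of $m$, it commutes with $m$, so both summands are $m$-stable. The element $m^{\omega} = e$ acts as zero on $(1-e)V$, whence $m$ is nilpotent there and contributes nothing to the trace; on $eV$ the element $m$ acts identically to $em = g$. Consequently $\chi(m) = \mathrm{tr}(g\mid eV)$, which is the value at $g$ of the ordinary character of the $FG_e$-module $eV\cong F\otimes_{\mathcal O}eP$.

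The crux of the argument is to show that $eP$ is projective as a left $\mathcal OG_e$-module, for then Brauer's theorem immediately yields $\mathrm{tr}(g\mid F\otimes_{\mathcal O}eP)=0$ and hence $\chi(m)=0$. Since $P$ is a direct summand of some $(\mathcal OM)^n$ as an $\mathcal OM$-module, and the functor $e(\cdot)$ is additive and $\mathcal OG_e$-equivariant, $eP$ is a direct summand of $(e\mathcal OM)^n$ as a left $\mathcal OG_e$-module. It therefore suffices to prove that $e\mathcal OM$ is a projective $\mathcal OG_e$-module.

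The set $eM$, which is an $\mathcal O$-basis of $e\mathcal OM$, is a left $G_e$-set under left multiplication in $M$ (well-defined because $eg = g$ for $g\in G_e$) and decomposes into orbits $G_e\cdot x_1,\ldots, G_e\cdot x_s$. For each $x_i$ one has $ex_i = x_i$, so $e\in \mathrm{Stab}_L(x_i)$, and an easy check (using $g^{-1}x_i = g^{-1}gx_i = ex_i = x_i$ whenever $gx_i=x_i$) shows the stabilizer $H_i = G_e\cap \mathrm{Stab}_L(x_i)$ is a genuine subgroup of $G_e$ lying inside the maximal subgroup of $\mathrm{Stab}_L(x_i)$ at $e$. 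By hypothesis this maximal subgroup is a $p'$-group, so $|H_i|$ is invertible in $\mathcal O$; the idempotent $\varepsilon_i = |H_i|^{-1}\sum_{h\in H_i}h$ therefore lies in $\mathcal OG_e$, and we obtain $\mathcal OG_e$-module isomorphisms $\mathcal O[G_e\cdot x_i]\cong \Ind_{H_i}^{G_e}(\mathcal O)\cong \mathcal OG_e\varepsilon_i$, exhibiting $\mathcal O[G_e\cdot x_i]$ as a direct summand of $\mathcal OG_e$. Summing over orbits realizes $e\mathcal OM$ as a direct sum of projective $\mathcal OG_e$-modules.

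The main obstacle is verifying that the hypothesis on left stabilizers in $M$ controls the subgroups $H_i$ of $G_e$: a priori, $H_i = G_e\cap \mathrm{Stab}_L(x_i)$ is only a subgroup of $G_e$, and its elements sit inside the monoid $\mathrm{Stab}_L(x_i)$ but not obviously inside any particular maximal subgroup thereof. The key point is that, by restricting to $x_i\in eM$, we have $e\in \mathrm{Stab}_L(x_i)$, which forces $H_i$ to lie in the maximal subgroup at $e$ — precisely the one whose order is constrained by the $p'$-hypothesis.
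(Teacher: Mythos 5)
Your proof is correct, and its overall skeleton matches the paper's: reduce $\chi(m)$ to the trace of $m^{\omega+1}$ on $e(F\otimes_{\mathcal O}P)$ with $e=m^{\omega}$, reduce projectivity of $eP$ over a group algebra to that of $e\mathcal OM$, and invoke Brauer's vanishing theorem. Where you genuinely diverge is in the choice of group and in the mechanism for proving projectivity of the permutation module $\mathcal O[eM]$. The paper works over the cyclic group $G=\langle m^{\omega+1}\rangle$ (after first replacing $m$ by $m^{\omega+1}$ using generalized conjugacy): the $p'$-stabilizer hypothesis forces the $p$-part $H=\langle m_p\rangle$ to act freely on $eM$, so $e\mathcal OM$ is $\mathcal OH$-free, and the relative projectivity criterion (Proposition~\ref{p:relatively.proj}, applicable since $[G:H]$ is a unit in $\mathcal O$) yields $\mathcal OG$-projectivity. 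You instead work over the full maximal subgroup $G_e$ and prove projectivity directly from the orbit decomposition: your observation that the point stabilizer $H_i=G_e\cap\mathrm{Stab}_L(x_i)$ is a subgroup of the maximal subgroup at $e$ of the monoid $\mathrm{Stab}_L(x_i)$ --- which is legitimate precisely because $x_i\in eM$ forces $e\in\mathrm{Stab}_L(x_i)$ --- is exactly where the hypothesis enters, and it correctly gives $p\nmid|H_i|$, so the averaging idempotent $\varepsilon_i$ lies in $\mathcal OG_e$ and each orbit module $\mathcal O[G_e\cdot x_i]\cong\Ind_{H_i}^{G_e}(\mathcal O)\cong\mathcal OG_e\varepsilon_i$ is a summand of $\mathcal OG_e$. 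What your route buys is self-containedness (you in effect reprove the special case of relative projectivity you need, with explicit idempotents) and it dispenses with the preliminary reduction to $m=m^{\omega+1}$ by handling the nilpotent action on $(1-e)V$ directly; what the paper's route buys is brevity, since freeness of the $H$-action is immediate and the quoted criterion does the rest. Both are sound.
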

\begin{proof}
Suppose that $m\in M$ is not $p$-regular.  Then $m,m^{\omega+1}$ are generalized conjugates, $m^{\omega+1}$ is not $p$-regular and $\chi(m)=\chi(m^{\omega+1})$.  Thus we may assume without loss of generality that $m=m^{\omega+1}$.  Let $e=m^{\omega}$ and let $G=\langle m\rangle$, which is a cyclic group generated by $m$ with identity $e$ and order divisible by $p$.  We claim that $eP$ is a finitely generated projective $\mathcal OG$-module.

Since each finitely generated projective $\mathcal OM$-module is a direct sum of projective indecomposable ones, and each projective indecomposable $\mathcal OM$-module is a direct summand in $\mathcal OM$, it suffices to show that $e\mathcal OM$ is a finitely generated projective $\mathcal OG$-module.  Write as usual,  $m=m_pm_{p'}=m_{p'}m_p$ with $m_p\in G$ an element of $p$-power order and $m_{p'}\in G$ $p$-regular.  Let $H=\langle m_p\rangle$.  Then $H$ is a $p$-group, and so our assumption on $M$ guarantees that $H$ acts freely on $eM$.  Hence $e\mathcal OM$ is a free $\mathcal OH$-module.   Since $[G:H]$ is not divisible by $p$, it is a unit in $\mathcal O$ (recall that $\mathfrak m\cap \mathbb Z =p\mathbb Z$, as $k$ has characteristic $p$).  Thus $e\mathcal OM$ is a projective $\mathcal OG$-module by Proposition~\ref{p:relatively.proj}, and clearly finitely generated as $eM$ is finite.

  Now $\chi|_G$ is the character of $e(F\otimes_{\mathcal O} P)=F\otimes_{\mathcal O}eP$ as an $FG$-module by the analogue of Proposition~\ref{p:restrict.brauer} for ordinary characters~\cite{repbook} (cf.~\cite[Proposition~7.14]{repbook}).  But then Brauer's result~\cite[Theorem~18.26(ii)]{CurtisReinerI} applies to show that $\chi(m)=0$, as $m$ is a not a $p$-regular element of $G$ and $eP$ is $\mathcal OG$-projective.
\end{proof}

Note that under the hypotheses of Proposition~\ref{p:p'stablizer} and the assumption that $C(FM)$ is nonsingular, the ordinary characters of finitely generated projective $\mathcal OM$-modules form a basis for the space of class functions on $M$ vanishing on the non-$p$-regular generalized conjugacy classes as they are linearly independent (since $e\colon K_0(kM)\to K_0(FM)$ is injective and $C(FM)$ is nonsingular) and there are as many of them as there are $p$-regular conjugacy classes.

\begin{Cor}\label{c:conjecture.good}
Let $p>0$ be a prime.  Suppose that $M$ is a monoid such that the left (or dually right) stabilizer of every element is a $p'$-monoid.  If $C(\mathbb CM)$ is nonsingular, then $C(kM)$ is nonsingular for any field $k$ of characteristic $p$.
\end{Cor}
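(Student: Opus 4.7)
The plan is to chain together the pieces already assembled in the paper. First, by Proposition~\ref{p:split.field.enough}, it suffices to verify nonsingularity of $C(kM)$ when $k$ is a splitting field for $M$. So I would fix a splitting $p$-modular system $(F,\mathcal O,k)$ for $M$ with $F$ containing a primitive $n^{th}$-root of unity, where $n$ is the least common multiple of the periods of elements of $M$; such a system exists by the discussion following Theorem~\ref{t:Brauer+CMP}.

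Next, since the Cartan matrix of a split algebra depends (up to conjugation by a permutation matrix) only on the characteristic of the splitting field, the hypothesis that $C(\mathbb CM)$ is nonsingular is equivalent to the statement that $C(FM)$ is nonsingular, again by Proposition~\ref{p:split.field.enough} applied to the extension $\mathbb C/\mathbb Q$ and to a splitting subfield of $\mathbb C$ containing the needed roots of unity. Having done this, I can bring in the characterization of Question~\ref{conjecture} in character-theoretic terms given by Proposition~\ref{p:reform}: it suffices to show that for every finitely generated projective $\mathcal OM$-module $P$, the ordinary character $\chi$ of $F\otimes_{\mathcal O}P$ is determined by its restriction to the set of $p$-regular elements of $M$.

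This is exactly where the stabilizer hypothesis enters. Under the assumption that the left stabilizer of every element of $M$ is a $p'$-monoid, Proposition~\ref{p:p'stablizer} tells us something much stronger than what Proposition~\ref{p:reform} requires: the character $\chi$ actually vanishes on every element that is not $p$-regular. In particular, $\chi$ is trivially determined by its restriction to the $p$-regular elements of $M$, and hence Proposition~\ref{p:reform} applies to give that $C(kM)$ is nonsingular. The dual case of right stabilizers is handled by passing to the opposite algebra and noting that $C((kM)^{op})=C(kM)^T$, which has the same determinant.

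Strictly speaking, there is no main obstacle left at the level of this corollary: all the heavy lifting was already done in proving Proposition~\ref{p:p'stablizer} (which invokes the freeness of $e\mathcal OM$ over $\mathcal O\langle m_p\rangle$, Proposition~\ref{p:relatively.proj}, and Brauer's classical vanishing theorem~\cite[Theorem~18.26(ii)]{CurtisReinerI}) and in establishing the Brauer square of Theorem~\ref{t:brauer.square} together with the character reformulation of Proposition~\ref{p:reform}. The only thing to check with any care is the bookkeeping for the reduction to a splitting $p$-modular system, which is a routine application of Proposition~\ref{p:split.field.enough} and the Noether--Deuring argument used in its proof.
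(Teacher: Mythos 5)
Your proposal is correct and follows essentially the same route as the paper: reduce to a splitting field via Proposition~\ref{p:split.field.enough}, identify $C(\mathbb CM)$ with $C(FM)$ for a splitting $p$-modular system, invoke Proposition~\ref{p:reform} to reduce to the character-determination statement, and conclude with the vanishing result of Proposition~\ref{p:p'stablizer}, handling the right-stabilizer case via $C(A^{op})=C(A)^T$. The only cosmetic difference is that the identification of characteristic-zero Cartan matrices over different splitting fields is a standard fact the paper simply asserts, rather than a second application of Proposition~\ref{p:split.field.enough}.
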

\begin{proof}
By Proposition~\ref{p:split.field.enough}, it is enough to handle the case that $k$ is a splitting field.  We may then restrict to the case that the left stabilizer of each element is a $p'$-monoid, as $C(A^{op}) = C(A)^T$ for any finite dimensional algebra $A$ over a splitting field.  Since the Cartan matrix over any splitting field of characteristic $0$ coincides with $C(\mathbb CM)$ (up to conjugation by a permutation matrix),
 it is enough to show that if  $(F,\mathcal O,k)$ is a $p$-modular system for $M$ such that $F$ contains a primitive $n^{th}$-root of unity, where $n$ is the least common multiple of the periods of elements of $M$, and $C(FM)$ is nonsingular, then $C(kM)$ is nonsingular.   By Proposition~\ref{p:reform}, it is enough to prove  that if $P$ is a finitely generated projective $\mathcal OM$-module, then the ordinary character of $F\otimes_{\mathcal O} P$ is determined by its values on $p$-regular elements of $M$.  But this is immediate from Proposition~\ref{p:p'stablizer}.
\end{proof}

If $M$ is a $p'$-monoid, then obviously its stabilizers are also $p'$-monoids, and so Corollary~\ref{c:conjecture.good} can be viewed as an improvement on Corollary~\ref{c:good.char}.

Notice that $M$ has aperiodic left stabilizers if and only if the left stabilizers are $p'$-monoids for every prime $p$.  Hence we have the following consequence of Corollary~\ref{c:conjecture.good}.

\begin{Cor}\label{c:aperiodic.stab}
Let $M$ be a finite monoid such that the left (or dually right) stabilizer of every element is aperiodic.  If $C(\mathbb CM)$ is nonsingular, then $C(KM)$ is nonsingular for every field $K$.
\end{Cor}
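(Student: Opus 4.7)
The plan is to observe that the hypothesis reduces immediately to the previously proved Corollary~\ref{c:conjecture.good}, with only a small separate argument needed to handle fields of characteristic zero.

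First I would unpack what aperiodicity of the stabilizers means: a finite monoid is aperiodic iff every element has period $1$, iff all of its maximal subgroups are trivial. In particular, every aperiodic monoid has no element of period divisible by any prime $p>0$, so it is a $p'$-monoid for \emph{every} prime $p$. Therefore the assumption that every left stabilizer of $M$ is aperiodic entails that every left stabilizer is a $p'$-monoid for every prime $p$ (and similarly if we are in the dual situation). This is exactly the hypothesis appearing in Corollary~\ref{c:conjecture.good}, so for any prime $p>0$ and any field $K$ of characteristic $p$, nonsingularity of $C(\mathbb CM)$ yields nonsingularity of $C(KM)$.

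It remains to treat the case $\mathrm{char}(K)=0$. By Proposition~\ref{p:split.field.enough}, it suffices to verify nonsingularity over a splitting field for $KM$ of characteristic $0$. Any two splitting fields of the same characteristic produce Cartan matrices which agree up to conjugation by a permutation matrix (they biject the simples and preserve composition multiplicities), and $\mathbb C$ is a splitting field for $M$ since it contains all roots of unity (Theorem~\ref{t:Brauer+CMP}). Hence $C(KM)$ is conjugate to $C(\mathbb CM)$ when $K$ has characteristic $0$ and is a splitting field, and so is nonsingular by hypothesis.

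The two cases together cover every field $K$, establishing the corollary. There is no real obstacle here: the content has already been done in Corollary~\ref{c:conjecture.good}; the new statement is just the observation that quantifying the $p'$-condition over all primes is equivalent to aperiodicity of the stabilizers.
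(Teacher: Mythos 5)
Your proposal is correct and is essentially the paper's own argument: the paper derives this corollary immediately from Corollary~\ref{c:conjecture.good} via the observation that having aperiodic stabilizers is equivalent to having $p'$-monoid stabilizers for every prime $p$. Your explicit treatment of the characteristic-zero case (via Proposition~\ref{p:split.field.enough} and the invariance of the Cartan matrix across splitting fields of the same characteristic) is a point the paper leaves implicit, but it matches remarks made earlier in the text.
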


Every finite monoid has a unique minimal (nonempty) ideal, which moreover is a regular $\mathscr J$-class, cf.~\cite[Chapter~1]{repbook}.  In particular, the left stabilizer of each element has a minimal ideal.  If $M$ is a regular monoid and $m\in M$, then $m=mnm$ for some $n\in M$.  Then $e=mn$ is an idempotent in $\mathrm{Stab}_L(m)$, and also $\mathrm{Stab}_L(m)=\mathrm{Stab}_L(e)$.  In particular, $e$ is a right zero of $\mathrm{Stab}_L(m)$ (and hence belongs to its minimal ideal), and the maximal subgroup at $e$ in $\mathrm{Stab}_L(m)$ is trivial.  If $\mathrm{Stab}_L(m)$ is a $p'$-monoid, then obviously the maximal subgroup of its minimal ideal is a $p'$-group.  This leads to the following conjecture.

\begin{Conjecture}\label{conj:new.conj}
Let $p>0$ be a prime.  Let $M$ be a finite monoid and suppose that the maximal subgroup of the minimal ideal of the left (or dually right) stabilizer of each element of $M$ is a $p'$-group. Then nonsingularity of $C(\mathbb CM)$ implies nonsingularity of $C(kM)$ for any field $k$ of characteristic $p$.
\end{Conjecture}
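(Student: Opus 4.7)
I would attempt to generalize Proposition~\ref{p:p'stablizer} and then invoke Proposition~\ref{p:reform}. Concretely, the goal is to show that under the hypothesis of Conjecture~\ref{conj:new.conj}, the ordinary character $\chi$ of $F\otimes_{\mathcal O}P$ vanishes at every non-$p$-regular $m\in M$, for each finitely generated projective $\mathcal OM$-module $P$. Following the strategy of Proposition~\ref{p:p'stablizer}, replace $m$ by the generalized conjugate $m^{\omega+1}$ and assume $m\in G_e$ with $e=m^{\omega}$. Put $G=\langle m\rangle$ and let $H=\langle m_p\rangle$ be the cyclic Sylow $p$-subgroup of $G$. Since $m=em=me$, the action of $m$ on $(1-e)(F\otimes_{\mathcal O}P)$ is zero, so $\chi(m)=\mathrm{tr}(m\mid e(F\otimes_{\mathcal O}P))$; by Proposition~\ref{p:relatively.proj} and $[G:H]\in\mathcal O^{\times}$, the vanishing would follow from $eP$ being $\mathcal OH$-projective together with Brauer's vanishing theorem for groups.

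\textbf{Reduction to a stabilizer statement.} The ring $\mathcal OH$ is local (its residue algebra is $k[x]/(x-1)^{|H|}$ and $\mathcal O$ is complete), so projective $\mathcal OH$-modules are free, and a transitive permutation module $\mathcal O[H/K]$ is $\mathcal OH$-projective iff $K=\{e\}$. Decomposing $e\mathcal OM$ along $H$-orbits on $eM$ yields $e\mathcal OM\cong\bigoplus_{x}\mathcal O[H/(H\cap\mathrm{Stab}_L(x))]$ as $\mathcal OH$-modules, and it suffices to show $H\cap\mathrm{Stab}_L(x)=\{e\}$ for every $x\in eM$. Suppose $h\in H$ fixes $x$, with $h\neq e$. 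Since $h$ lies in the cyclic group $G$ whose identity is $e$, we have $h^{\omega}=e$, and hence $h$ is a nontrivial $p$-element of the maximal subgroup $G_e^{\mathrm{Stab}_L(x)}$ of $\mathrm{Stab}_L(x)$ at $e$. The hypothesis of Conjecture~\ref{conj:new.conj} would deliver the sought contradiction exactly when $e$ itself lies in the minimal ideal of $\mathrm{Stab}_L(x)$, since then the maximal subgroup at $e$ would be forced to be a $p'$-group.

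\textbf{Main obstacle.} The whole argument thus hinges on the following purely semigroup-theoretic claim: under the hypothesis of the conjecture, $e=m^{\omega}$ belongs to the minimal ideal of $\mathrm{Stab}_L(x)$ for every $x\in eM$. In the previously handled cases this is either automatic (for a regular monoid $e$ may be chosen as a right zero of $\mathrm{Stab}_L(m)$) or vacuous (for aperiodic stabilizers every maximal subgroup is trivial). In the generality of the conjecture $e$ need not be a right zero of $\mathrm{Stab}_L(x)$, and I see no intrinsic reason it should lie in the kernel. Two workarounds seem worth exploring: (i) abandon the naive $\mathcal OH$-projectivity of $eP$ and instead decompose $eP=fP\oplus(e-f)P$ for an idempotent $f$ in the minimal ideal of $\mathrm{Stab}_L(x)$, applying the hypothesis to $fP$ while arguing by $\mathscr J$-descent inside $\mathrm{Stab}_L(x)$ that $(e-f)P$ contributes nothing to $\chi(m)$; or (ii) an induction along a principal series of $M$ that couples the Cartan determinant factorization of Theorem~\ref{t:main.cd} on the regular $\mathscr J$-classes with a relative vanishing statement on the non-regular ones. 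Either plan seems to require a genuinely new ingredient beyond the union of the regular and aperiodic-stabilizer arguments, which is likely where the substantive content of the conjecture lies.
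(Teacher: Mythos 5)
This statement is Conjecture~\ref{conj:new.conj}: the paper does not prove it, and explicitly offers it only as a conjecture whose ``evidence'' consists of the two special cases already established (Theorem~\ref{t:main.cd} for regular monoids and Corollary~\ref{c:conjecture.good} for monoids with $p'$-stabilizers). So there is no proof in the paper against which to measure your attempt, and your proposal --- which you candidly present as a plan with an unresolved obstacle rather than a proof --- does not settle the statement either.

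That said, your diagnosis of the obstruction is accurate and matches the actual state of affairs. The proof of Proposition~\ref{p:p'stablizer} needs $H=\langle m_p\rangle$ to act freely on $eM$, and as you observe this comes down to showing $H\cap\mathrm{Stab}_L(x)=\{e\}$ for all $x\in eM$; a nontrivial $h$ in this intersection is a nontrivial $p$-element of the maximal subgroup of $\mathrm{Stab}_L(x)$ \emph{at $e$}, and the conjecture's hypothesis only constrains the maximal subgroup of the \emph{minimal ideal} of $\mathrm{Stab}_L(x)$, where $e$ has no reason to sit. Your local-ring/orbit-decomposition reduction is correct ($\mathcal O[H/K]$ is $\mathcal OH$-projective iff $K$ is trivial, for $H$ a $p$-group). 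One further point worth noting: the regular case is not handled in the paper by any character-vanishing argument at all, but by the Liu--Paquette factorization $\cd(A)=\cd(eAe)\cd(A/AeA)$ along a principal series, so the two known cases use genuinely disjoint methods; this is precisely why no routine merger of the two arguments (your options (i) and (ii)) is available, and why the conjecture remains open. In short: nothing in your write-up is wrong, but it correctly stops short of a proof, as it must.
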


Nonsingularity of the Cartan matrix of the algebra of a regular monoid (cf.~Theorem~\ref{t:main.cd}) and Corollary~\ref{c:conjecture.good} are both implied by Conjecture~\ref{conj:new.conj}, and hence can be viewed as evidence towards that conjecture.

A proposed answer to Question~\ref{conjecture} is then the following.

\begin{Conjecture}\label{conj:new.conj.two}
Let $M$ be a finite monoid and suppose that the maximal subgroup of the minimal ideal of the left (or dually right) stabilizer of each element of $M$ is trivial. Then nonsingularity of $C(\mathbb CM)$ implies nonsingularity of $C(KM)$ for any field $K$.
\end{Conjecture}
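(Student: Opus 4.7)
The plan is to reduce, following Corollary~\ref{c:conjecture.good}, to showing that for every finitely generated projective $\mathcal{O}M$-module $P$ the ordinary character of $F\otimes_{\mathcal{O}}P$ is determined by its restriction to the $p$-regular elements of $M$, for a splitting $p$-modular system $(F,\mathcal{O},k)$ chosen once per residue characteristic $p>0$. Proposition~\ref{p:reform} then delivers nonsingularity of $C(kM)$, and Proposition~\ref{p:split.field.enough} reduces an arbitrary field $K$ to the splitting case.

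The natural attempt, mirroring Proposition~\ref{p:p'stablizer}, is to strengthen this to the Brauer-type claim that $\chi_{F\otimes_{\mathcal{O}}P}$ vanishes on every non-$p$-regular element. Replacing $m$ by its generalized conjugate $m^{\omega+1}$, I may assume $m$ generates a cyclic group $G=\langle m\rangle$ of order divisible by $p$, with identity $e=m^{\omega}$, sitting inside the maximal subgroup $G_e$ of $M$; then $\chi_{F\otimes_{\mathcal{O}}P}(m)=\chi_{F\otimes_{\mathcal{O}}eP}(m)$. If $eP$ were $\mathcal{O}G_e$-projective then Brauer's classical vanishing theorem~\cite[Theorem~18.26]{CurtisReinerI} would finish the argument, and since $P$ is a direct summand of $\mathcal{O}M^n$ this reduces to proving $e\mathcal{O}M$ is $\mathcal{O}G_e$-projective. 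A Mackey-type decomposition of $eM$ into $G_e$-orbits then turns projectivity into the requirement that each stabilizer $G_e\cap\mathrm{Stab}_L(y)$ be a $p'$-subgroup of $G_e$; a short calculation (closure under inverses in $G_e$) shows this intersection is always a subgroup of the maximal subgroup of $\mathrm{Stab}_L(y)$ at $e$.

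The main obstacle is that the conjecture's hypothesis controls the maximal subgroups only at idempotents of the \emph{minimal} ideal of $\mathrm{Stab}_L(y)$, whereas $e=m^{\omega}$ need not lie there. I expect this is where the real work has to go: the first thing I would try is to inject the $p$-part of the maximal subgroup at $e$ into the trivial maximal subgroup at an idempotent $f$ of the minimal ideal of $\mathrm{Stab}_L(y)$ via a map such as $c\mapsto fcf$, and deduce a contradiction from the hypothesis. Should this route fail in general, a fallback plan is to generalize the principal-series argument of Theorem~\ref{t:main.cd}: treat the regular $\mathscr J$-classes of $M$ by Proposition~\ref{p:control.glob.cd} exactly as in the regular case, and use the stabilizer hypothesis to show that non-regular $\mathscr J$-classes contribute no new kernel to the factored form $C(kM)=D^{T}C(FM)D$ of Theorem~\ref{t:CartanSquare}. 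In either approach, the technical heart of the problem is converting the minimal-ideal hypothesis on stabilizers into control over maximal subgroups at arbitrary idempotents, and I do not presently see how to do this in full generality.
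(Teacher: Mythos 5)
The statement you are trying to prove is Conjecture~\ref{conj:new.conj.two}; the paper offers no proof of it, and it is posed precisely because the author does not know how to bridge the gap you identify. So there is nothing to compare your argument against, and your proposal, as you yourself concede, is not a proof: the ``technical heart'' you defer is exactly the open problem.

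Beyond that, your primary route is blocked by an example already in the paper. You propose to strengthen the needed statement (that the ordinary character of $F\otimes_{\mathcal O}P$ is \emph{determined by} its restriction to the $p$-regular elements) to the Brauer-type claim that it \emph{vanishes} on all non-$p$-regular elements. Under the conjecture's hypothesis this is false. Take $M$ to be the multiplicative monoid of $\mathbb F_3$ and $p=2$, as in the Example following Proposition~\ref{p:reform}: every left stabilizer is either $\{\ov 1\}$ or all of $M$, and in the latter case the minimal ideal is $\{\ov 0\}$ with trivial maximal subgroup, so the hypothesis of Conjecture~\ref{conj:new.conj.two} holds; yet the character of $\mathbb Q_2\otimes_{\mathbb Z_2}\mathbb Z_2M$ takes the value $1\neq 0$ at the non-$2$-regular element $\ov 2$. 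This is exactly why the paper distinguishes Proposition~\ref{p:p'stablizer} (where the whole stabilizer is a $p'$-monoid and vanishing does hold) from the conjecture (where only ``determined by restriction'' can be hoped for). Your reduction of projectivity of $e\mathcal OM$ over $\mathcal O\langle m\rangle$ to freeness of the $\langle m_p\rangle$-action on $eM$ likewise fails here: $\ov 2$ fixes $\ov 0$. Your fallback via Proposition~\ref{p:control.glob.cd} and a principal series also does not go through, because for a non-regular $\mathscr J$-class $J_k$ the bimodule $KI_{k+1}/KI_k$ is not a projective left $A_k$-module, which is the whole point of restricting Theorem~\ref{t:main.cd} to regular monoids. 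In short: the conjecture remains open, and the specific strengthening you lean on is refuted by the paper's own example.
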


\def\malce{\mathbin{\hbox{$\bigcirc$\rlap{\kern-7.75pt\raise0,50pt\hbox{${\tt
  m}$}}}}}\def\cprime{$'$} \def\cprime{$'$} \def\cprime{$'$} \def\cprime{$'$}
  \def\cprime{$'$} \def\cprime{$'$} \def\cprime{$'$} \def\cprime{$'$}
  \def\cprime{$'$} \def\cprime{$'$}

\end{document}